\documentclass[10pt]{amsart}
\usepackage{amsthm, amsmath,cleveref, amsfonts, amssymb, enumerate, textcmds, tensor, enumitem}
\usepackage[margin=1.25in]{geometry}
\usepackage{pst-node}
\usepackage{tikz-cd} 
\usepackage[toc,page]{appendix}
\usepackage{graphicx,tikz}
\usepackage{pinlabel}
\usepackage[colorinlistoftodos]{todonotes}

\newtheorem{thm}{Theorem}[section]
\newtheorem{lem}[thm]{Lemma}
\newtheorem{prop}[thm]{Proposition}

\newtheorem*{thma}{Theorem A}
\newtheorem*{thmb}{Theorem B}

\newtheorem*{thmc}{Theorem C}

\newtheorem*{queE}{Question E}
\newtheorem*{queF}{Question F}
\newtheorem*{cord}{Corollary D}
\newtheorem*{tech}{Technical Assumption}

\DeclareMathOperator{\codim}{codim}
\DeclareMathOperator{\rank}{Rank}

\DeclareMathOperator{\SR}{\mathcal{SR}}
\DeclareMathOperator{\Y}{\mathcal{Y}}
\DeclareMathOperator{\J}{\mathcal{J}}
\DeclareMathOperator{\Jb}{\mathbf{J}}
\DeclareMathOperator{\M}{\mathfrak{M}}

\DeclareMathOperator{\tr}{\textrm{trace}}

\theoremstyle{definition}
\newtheorem{defn}[thm]{Definition}
\newtheorem{remark}[thm]{Remark}
\newtheorem{example}[thm]{Example}
\begin{document}
\title[Harmonic Surfaces]{Spaces of harmonic surfaces in non-positive curvature}
\author{Nathaniel Sagman}
\address{Nathaniel Sagman: University of Luxembourg, 2 Av. de l'Universite, 4365 Esch-sur-Alzette, Luxembourg.} \email{nathaniel.sagman@uni.lu}
\begin{abstract}
Let $\mathfrak{M}(\Sigma)$ be an open and connected subset of the space of hyperbolic metrics on a closed orientable surface, and $\mathfrak{M}(M)$ an open and connected subset of the space of metrics on an orientable manifold of dimension at least $3$. We impose conditions on $M$ and $\M(M)$, which are often satisfied when the metrics in $\M(M)$ have non-positive curvature. Under these conditions, the data of a homotopy class of maps from $\Sigma$ to $M$ allows us to view $\mathfrak{M}(\Sigma)\times \mathfrak{M}(M)$ as a space of harmonic maps of surfaces. Using transversality theory for Banach manifolds, we prove that the set of somewhere injective harmonic maps is open, dense, and connected in the space of harmonic maps. We also prove some results concerning the distribution of harmonic immersions and embeddings in the space of harmonic maps.
\end{abstract}

\maketitle

\begin{section}{Introduction}
The theory of harmonic maps from surfaces is well developed and has proved to be a useful tool in geometry and topology. There are many broadly applicable existence theorems for harmonic maps, but, compared to other objects like minimal surfaces, their geometry is neither well behaved nor easy to understand. Locally, the most we can say about an arbitrary harmonic map from a surface is that, in a good choice of coordinates, up to small perturbations it agrees with an $n$-tuple of harmonic homogeneous polynomials (see the Hartman-Wintner theorem \cite{HW}). And in contrast, minimal maps are weakly conformal and hence have much nicer local properties. In this paper, we consider spaces of harmonic surfaces and study their \textit{generic} qualitative behaviour through transversality theory. The goal is twofold: to find nice properties shared by a wide class of harmonic maps, and to further develop the methods and analysis for future problems.

Throughout the paper, let $\Sigma$ be a closed and orientable surface of genus $g\geq 2$, and let $M$ be an orientable $n$-manifold, $n\geq 3$. Fixing integers $r\geq 2$ and $k\geq 1$, as well as  $\alpha, \beta \in (0,1)$ with $\alpha\geq \beta$, denote by $\M(\Sigma)$ an open and connected subset of the space of $C^{r,\alpha}$ hyperbolic metrics on $\Sigma$, and by $\M(M)$ an open and connected subset of the space of $C^{r+k,\beta}$ metrics on $M$. Set $C(\Sigma,M)$ to be the space of $C^{r+1,\alpha}$ mappings from $\Sigma\to M$. These all have $C^\infty$ Banach manifold structures.
\begin{defn}
A homotopy class $\mathbf{f}$ of maps from $\Sigma$ to $M$ is admissible if the subgroup $\mathbf{f}_*(\pi_1(\Sigma))\subset \pi_1(M)$ is not abelian.
\end{defn}
For the whole paper, we fix an admissible class $\mathbf{f}$ and assume the following.
\begin{tech}
For all $(\mu,\nu)\in \M(\Sigma)\times \M(M)$, there exists a unique harmonic map $f_{\mu,\nu}:(\Sigma,\mu)\to (M,\nu)$ in the class $\mathbf{f}$, and $f_{\mu,\nu}$ is a non-degenerate critical point of the Dirichlet energy functional. 
\end{tech}
The technical assumption is satisfied by a wide range of manifolds $M$ and families of metrics. The central example is that of a closed manifold $M$, with $\M(M)$ consisting of negatively curved metrics. In Section 2, we give more examples that are of interest in geometry and topology.

With this assumption, $\M=\M(\Sigma)\times \M(M)$ may be viewed as a space of harmonic surfaces inside $M$. More precisely, a result of Eells-Lemaire \cite[Theorem 3.1]{EL} (a consequence of the implicit function theorem for Banach manifolds) implies that around each pair of metrics $(\mu_0,\nu_0)\in \M$, there is a neighbourhood $U\subset \M$ such that the mapping from $U\to C(\Sigma,M)$ given by $$(\mu,\nu)\mapsto f_{\mu,\nu}$$ is $C^k$. By uniformization and conformal invariance of energy, the restriction to hyperbolic metrics on the source does not give up any information.  

Our first result concerns the notion of a somewhere injective map, which is originally from symplectic topology. 
\begin{defn}
A $C^1$ map $f:\Sigma\to M$ is somewhere injective if there exists a regular point $p\in \Sigma$ such that $f^{-1}(f(p))=\{p\}$. Otherwise, we say $f$ is nowhere injective.
\end{defn}
\begin{remark}
When the somewhere injective harmonic map $f$ has isolated singular set, or more generally the set $A(f)$ from Section 4 is connected, it is injective on an open and dense set of points. This follows from the Aronszajn theorem \cite[page 248]{Ar} (see also \cite[Theorem 1]{S}).
\end{remark}
We let $\mathfrak{M}^*\subset \M$ denote the space of metrics $(\mu,\nu)$ such that $f_{\mu,\nu}$ is somewhere injective.
\begin{thma}
The subset $\mathfrak{M}^*\subset \M$ is open, dense, and connected.
\end{thma}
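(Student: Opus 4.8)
The plan is to establish the three properties separately; openness is elementary, while density and connectedness both follow from a Sard--Smale argument applied to a universal space of double points. Write $f_{\mu,\nu}$ for the harmonic map furnished by the Technical Assumption and $R(g)\subset\Sigma$ for the set of points where a $C^1$ map $g$ is an immersion. For openness, suppose $f_0=f_{\mu_0,\nu_0}$ is somewhere injective with witness $p_0\in R(f_0)$, $f_0^{-1}(f_0(p_0))=\{p_0\}$, and choose a closed disc $\overline D\ni p_0$ on which $f_0$ is an embedding. Since $\Sigma\setminus\operatorname{int}\overline D$ is compact and disjoint from $f_0^{-1}(f_0(p_0))$, the point $f_0(p_0)$ lies at positive distance from $f_0(\Sigma\setminus\operatorname{int}\overline D)$. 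As $(\mu,\nu)\mapsto f_{\mu,\nu}$ is continuous into $C^{r+1,\alpha}(\Sigma,M)$, hence into $C^1$, for $(\mu,\nu)$ close to $(\mu_0,\nu_0)$ the map $f_{\mu,\nu}$ still embeds $\overline D$ and $f_{\mu,\nu}(p_0)$ still lies at positive distance from $f_{\mu,\nu}(\Sigma\setminus\operatorname{int}\overline D)$; hence $f_{\mu,\nu}^{-1}(f_{\mu,\nu}(p_0))=\{p_0\}$, so $(\mu,\nu)\in\mathfrak M^*$ and $\mathfrak M^*$ is open.

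For density, I would first record that $R(f_{\mu,\nu})$ is open and dense in $\Sigma$ with nowhere dense complement, contained in the set $A(f_{\mu,\nu})$ of Section~4: $f_{\mu,\nu}$ is non-constant by admissibility, so its branch points are isolated by Hartman--Wintner, while a harmonic map of rank $\le 1$ on a nonempty open set maps that set into a geodesic, which would force $\mathbf f_*(\pi_1(\Sigma))$ abelian. Then form the universal double-point space
\[
\mathcal Z=\bigl\{(\mu,\nu,p,q)\in\M\times(\Sigma\times\Sigma\setminus\Delta_\Sigma):p\in R(f_{\mu,\nu}),\ f_{\mu,\nu}(p)=f_{\mu,\nu}(q)\bigr\}
\]
with evaluation $\Phi(\mu,\nu,p,q)=(f_{\mu,\nu}(p),f_{\mu,\nu}(q))\in M\times M$. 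The central claim, discussed next, is that $\Phi$ is a submersion at every point of $\mathcal Z$. Granting it, $\mathcal Z=\Phi^{-1}(\Delta_M)$ is a $C^k$ Banach submanifold and $\pi\colon\mathcal Z\to\M$ is Fredholm of index $\dim(\Sigma\times\Sigma\setminus\Delta_\Sigma)-\operatorname{codim}(\Delta_M)=4-n$ over the open part where $q$ too is regular, and of index $\le 3-n$ over the stratified, lower-dimensional part where $q\in R(f_{\mu,\nu})^c$. By Sard--Smale the regular values of $\pi$ are residual in $\M$; over such a value the double points of $f_{\mu,\nu}$ with regular first coordinate project to a countable union of manifolds of dimension $\le 4-n<2$ in $\Sigma$, so they cannot cover the surface $R(f_{\mu,\nu})$, and any $p\in R(f_{\mu,\nu})$ off this set satisfies $f_{\mu,\nu}^{-1}(f_{\mu,\nu}(p))=\{p\}$. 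Hence $\mathfrak M^*$ contains a residual set, and being open it is dense.

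The submersion claim is where I expect the real difficulty. Fix $(\mu,\nu,p,q)\in\mathcal Z$, write $f=f_{\mu,\nu}$ and $x=f(p)=f(q)$; it suffices to show that the target-metric variations $\nu_t=\nu+t\eta$ make $\bigl(\tfrac{d}{dt}\big|_0 f_{\mu,\nu_t}(p),\,\tfrac{d}{dt}\big|_0 f_{\mu,\nu_t}(q)\bigr)$ sweep out $T_xM\times T_xM$. The variation $V_\eta=\tfrac{d}{dt}\big|_0 f_{\mu,\nu_t}\in\Gamma(f^*TM)$ solves $LV_\eta=S_\eta$, where $L$ is the Jacobi operator of the energy at $f$ — invertible by the non-degeneracy in the Technical Assumption — and $S_\eta$ is, pointwise, the contraction of the energy density tensor $e^{\alpha\beta}=\mu^{ij}\partial_if^\alpha\partial_jf^\beta$ of $f$ against the variation $\delta\Gamma(\eta)$ of the Levi-Civita connection of $\nu$. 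With $G=L^{-1}$, whose kernel is smooth off the diagonal and has a logarithmic singularity on it, one has $V_\eta(p)=\int_\Sigma G(p,z)S_\eta(z)\,dA(z)$. Choosing a regular point $z'\ne p$ close to $p$ and $\eta$ supported in a small ball about $f(z')$: the pointwise map $\delta\Gamma\mapsto e^{\alpha\beta}(z')\delta\Gamma^\gamma_{\alpha\beta}$ is onto $T_xM$ since $e(z')$ has rank $2$, so by the Koszul correspondence between $\delta\Gamma$ and $\nabla\eta$ one can make $S_\eta$ approximate $\delta_{z'}\otimes s$ for any prescribed $s\in T_{f(z')}M$; then $V_\eta(p)\approx G(p,z')s$ is large and depends invertibly on $s$ as $z'\to p$, while $V_\eta(q)\approx G(q,z')s$ stays bounded. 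Running the mirror construction with $z''$ near $q$ and superposing shows $(V(p),V(q))$ attains every value in $T_xM\times T_xM$. The delicate point — the main obstacle — is that the approximation $S_\eta\approx\delta_{z'}\otimes s$ must accommodate the other points of $f^{-1}(f(z'))$, on which $\eta$ also creates a source; controlling their configuration (finiteness off a small exceptional set) is precisely what Aronszajn's unique continuation theorem and the analysis of $A(f)$ in Section~4 are for, and with that control one either finds those preimages far from $p$ and $q$, so the $G(p,z')$ term dominates, or spends the freedom remaining in $\delta\Gamma$ after prescribing $S_\eta(z')$ to cancel the unwanted masses.

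For connectedness, $\M=\M(\Sigma)\times\M(M)$ is path-connected, so given two points of $\mathfrak M^*$ I would join them by a smooth path $\gamma\colon[0,1]\to\M$ and invoke parametrized transversality: after perturbing $\gamma$ rel endpoints to be transverse to $\pi$ (and, when $n=3$, to a stratification of $\mathcal Z$ adapted to the codimension-$\ge 2$ locus where $df_p(T_p\Sigma)+df_q(T_q\Sigma)\neq T_xM$, which is where $\pi$ may fail to be submersive), the pullback $\mathcal Z^\gamma=\{(t,p,q):(\gamma(t),p,q)\in\mathcal Z\}$ is a finite-dimensional manifold of dimension $\le 5-n$ over the $q$-regular part and $\le 4-n$ elsewhere. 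The Technical Assumption gives uniform $C^{r+1,\alpha}$ control of $f_{\gamma(t)}$ along the compact path, so double points with regular first coordinate stay away from $\Delta_\Sigma$ and $\mathcal Z^\gamma\to[0,1]$ is proper; a dimension count, refined when $n=3$ by the submersivity of $\pi$ away from the exceptional strata, then shows that for every $t$ the double points of $f_{\gamma(t)}$ with regular first coordinate project to a subset of $\Sigma$ of dimension $\le 1$ together with finitely many points. As in the density step this misses part of $R(f_{\gamma(t)})$ for every $t$, so $f_{\gamma(t)}$ is somewhere injective throughout and $\gamma$ lies in $\mathfrak M^*$. Therefore $\mathfrak M^*$ is path-connected, and being open, connected.
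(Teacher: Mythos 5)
Your openness argument is fine and matches the paper's (the set of nowhere injective maps is closed by the same compactness argument). The fatal problem is the ``central claim'' that $\Phi$ is a submersion at every point of $\mathcal Z$: this is simply false for some $(\mu,\nu)\in\M$, and the whole Sard--Smale scaffolding collapses without it. Concretely, if the conformal structure $\Sigma_\mu$ admits a holomorphic branched covering $h:\Sigma_\mu\to\Sigma_0$ of degree $\ge 2$ onto a surface of genus $\ge 2$ and $\mathbf f$ factors through $h$, then by uniqueness $f_{\mu,\nu_t}=g_{\nu_t}\circ h$ for \emph{every} target metric $\nu_t$, so every harmonic variation $V$ satisfies $V(p)=V(q)$ whenever $h(p)=h(q)$; together with $df(T_p\Sigma)=df(T_q\Sigma)$ this confines the image of $d\Phi$ to a subspace of dimension $n+2<2n$ modulo $T\Delta_M$, so $\Phi$ is not transverse there. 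Worse, such $f_{\mu,\nu}$ are genuinely nowhere injective for \emph{all} $\nu$, so no perturbation of the target metric can rescue them and no residual-set argument over all of $\M$ can simply ignore them. This is exactly why the paper first deletes the exceptional Riemann surfaces (branched covers of higher-genus surfaces and surfaces with anti-holomorphic involutions) via Proposition 4.3, and why the factorization theorem of \cite{me} (Lemma 4.2) is flagged as a crucial ingredient: it guarantees that a conformal self-identification $f\circ h=f$ forces $\mu$ to be exceptional, so that on the non-exceptional locus the only remaining obstructions to transversality can be attacked analytically. Your proposal contains no mechanism for this, and your fallback --- ``spend the freedom remaining in $\delta\Gamma$ to cancel the unwanted masses'' --- is precisely what cannot be done in the factoring case, since the two source terms are locked together by the covering.

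Even after restricting to non-exceptional $\mu$, the submersion claim is the entire content of the theorem, and your sketch does not deliver it in the hard configuration: when the two harmonic disks through $x=f(p)=f(q)$ are tangent (or coincide as sets) at $x$, the preimage $z''$ of $f(z')$ near $q$ carries a source term of exactly the same strength as the one near $z'$, so ``$G(p,z')s$ is large while $G(q,z')s$ stays bounded'' fails. The paper's resolution requires the precise $\log|z|$ and $1/z$ asymptotics of the reproducing kernels (Section 3), the super-regular condition --- that $f_*\mu(p)$ and $f_*\mu(q)$ are not collinear, which is what lets one choose the quadratic form $\varphi_{ij}$ with $\mu^{ij}(z_1)\varphi_{ij}=1$ and $\mu^{ij}(z_2)\varphi_{ij}=0$ --- and, in the non-tangential case, variations supported in ``fat cylinders'' to gain the extra factor of $\epsilon$ in the area of $\Omega_2$; Aronszajn and the analysis of $A(f)$ control only where the preimages are regular and finite, not whether the coupled contributions can be decoupled. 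Separately, your connectedness step has a dimension-count gap at $n=3$: the parametrized double-point set has dimension $5-n=2=\dim\Sigma$, so nothing prevents its fibre over some $t$ from covering an open set of $\Sigma$; the paper's four-point map $\Theta:\Sigma^2\times(P\times Q\times\mathcal D)\to M^2\times M^2$, with diagonal of codimension $2n\ge 6>5$, is designed precisely to avoid this.
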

\begin{remark}
A minimal map on a Riemann surface is nowhere injective if and only if it factors through a holomorphic branched cover \cite[Section 3]{GOR}, or the surface admits an anti-holomorphic involution that leaves the map invariant \cite[Theorem 1.1]{me}. Pseudoholomorphic maps from a surface to a symplectic manifold have the same property (see \cite[Chapter 2.5]{MS}). Harmonic maps, in contrast, do not have the same rigidity.
\end{remark}
\begin{remark}
These results for minimal surfaces, or more general branched immersions in the sense of \cite{GOR}, are proved using the factorization theorem of Gulliver-Osserman-Royden. The analogue of this theorem for harmonic surfaces is the subject of our previous paper \cite{me}. Fittingly, the factorization theorem for harmonic maps \cite[Theorem 1.1]{me} is a crucial ingredient in the proof of Theorem A.
\end{remark}
Secondly, we prove a set of results about the structure of the space of harmonic maps near somewhere injective maps. The somewhere injective condition, while not obviously significant, comes into play in transversality arguments used for spaces of minimal surfaces (see the paper of Moore \cite{M1} and the book that followed \cite[Chapter 5]{M2}) and pseudoholomorphic curves (see \cite[Chapter 3]{MS}). In some sense, nowhere injective surfaces play the same role as reducible connections in Yang-Mills moduli spaces. 
\begin{thmb}
Suppose $\dim M \geq 4$, and let $(\mu,\nu)$ be such that $f_{\mu,\nu}$ is somewhere injective and has isolated singularities. Then there exists a neighbourhood $U\subset \M$ containing $(\mu,\nu)$ such that the space of harmonic immersions in $U$ is open and dense. If $\dim M \geq 5$, then the space of harmonic immersions in $U$ is also connected.
\end{thmb}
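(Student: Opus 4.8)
\emph{Proof proposal.}
The plan is to realize a jet--transversality argument over the moduli space: view $df_{\mu',\nu'}(p)$ as a section of a bundle over $U\times\Sigma$, intersect it with the rank--degeneracy loci, and count dimensions; the somewhere injective and isolated--singularity hypotheses will be used to verify transversality through a unique continuation argument, in the spirit of the generic immersion results for pseudoholomorphic curves \cite[Chapter 3]{MS} and for minimal surfaces \cite[Chapter 5]{M2}. Since $\mathfrak{M}^*$ is open by Theorem A and contains $(\mu,\nu)$, I would first choose a connected neighbourhood $U\subset\mathfrak{M}^*$ on which the $C^k$ family $(\mu',\nu')\mapsto f_{\mu',\nu'}$ of \cite[Theorem 3.1]{EL} is defined. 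That the set $\mathcal{I}\subset U$ of parameters with $f_{\mu',\nu'}$ an immersion is open is then immediate, because immersivity is an open condition on $C^1$ maps and $(\mu',\nu')\mapsto f_{\mu',\nu'}$ is continuous into $C^1(\Sigma,M)$; so the work is density and, when $\dim M\geq 5$, connectedness.

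For density, I would form the vector bundle $\widetilde J\to U\times\Sigma$ with fibre $\mathrm{Hom}(T_p\Sigma, T_{f_{\mu',\nu'}(p)}M)$ over $(\mu',\nu',p)$, together with the tautological section $\sigma(\mu',\nu',p)=df_{\mu',\nu'}(p)$, and the sub--fibre--bundles $\mathcal{R}_j\subset\widetilde J$, $j=0,1$, of homomorphisms of rank exactly $j$; these are submanifolds of fibrewise codimension $(2-j)(n-j)$, that is $n-1$ for $j=1$ and $2n$ for $j=0$. The key lemma is that $\sigma$ is transverse to $\mathcal{R}_0$ and to $\mathcal{R}_1$. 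Granting this, $\sigma^{-1}(\mathcal{R}_j)$ is a Banach submanifold of $U\times\Sigma$ of codimension $(2-j)(n-j)$, and the projection $\sigma^{-1}(\mathcal{R}_j)\to U$ is Fredholm of index $2-(2-j)(n-j)$, which is negative as soon as $\dim M\geq 4$. By the Sard--Smale theorem the regular values form a residual subset of $U$, and, the index being negative, a regular value has empty preimage; hence for a residual (so dense) set of $(\mu',\nu')\in U$ the differential $df_{\mu',\nu'}(p)$ has rank $2$ at every $p$, i.e. $f_{\mu',\nu'}\in\mathcal{I}$. Combined with openness, $\mathcal{I}$ is open and dense.

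The crux, and where I expect the real difficulty to lie, is the transversality lemma. At a point $(\mu',\nu',p)\in\sigma^{-1}(\mathcal{R}_1)$ one must show that the image of $d\sigma$ together with the tangent space of $\mathcal{R}_1$ spans $\widetilde J$; varying the base point $p$ accounts for the directions of $\mathcal{R}_1$ internal to $\Sigma$, so the point is to reach the $n-1$ fibre--normal directions by varying the metrics. Differentiating $(\mu',\nu')\mapsto f_{\mu',\nu'}$ and using that the Jacobi operator $L_{\mu',\nu'}$ is invertible (the Technical Assumption), a target metric variation $\dot\nu'$ produces the Jacobi field $\dot f=-L_{\mu',\nu'}^{-1}\bigl(\partial_\nu(\tau(f_{\mu',\nu'}))[\dot\nu']\bigr)$, where $\tau$ is the tension field and the source term is supported on $f_{\mu',\nu'}^{-1}(\mathrm{supp}\,\dot\nu')$. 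I would establish surjectivity of the resulting first--order data at $p$ by duality: a nonzero covector annihilating all such variations yields, via $L_{\mu',\nu'}^{-*}$ applied to a distribution of order one supported at $p$, a nonzero solution $G$ of the adjoint Jacobi equation on $\Sigma\setminus\{p\}$ that is singular at $p$ and orthogonal to $\partial_\nu\tau[\dot\nu']$ for all admissible $\dot\nu'$. Picking $\dot\nu'$ supported in a small ball around $f_{\mu',\nu'}(p_0)$, where $p_0$ is a regular point with single preimage supplied by somewhere injectivity and Theorem A, the source term fills out an arbitrary section near $p_0$ and vanishes elsewhere (properness of $f_{\mu',\nu'}$), so $G\equiv 0$ near $p_0$; since $\Sigma\setminus\{p\}$ is connected (genus $\geq 2$), Aronszajn's unique continuation theorem \cite[page 248]{Ar} forces $G\equiv 0$ throughout $\Sigma\setminus\{p\}$, contradicting the singularity at $p$ unless the covector vanished. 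The isolated--singularity hypothesis enters here: after shrinking $U$ it localizes the perturbation analysis near the finitely many singular points, and it is what makes the unique continuation and the properties of the set $A(f)$ from the Remark available. The hands--on verification that $\dot\nu'\mapsto\partial_\nu\tau[\dot\nu']$ is locally surjective near $p_0$ (which uses $df_{\mu',\nu'}(p_0)\neq 0$ and $\dim M\geq 3$), and the functional--analytic handling of $L_{\mu',\nu'}^{-*}$ on order--one distributions, are the main technical obstacles. The rank--$0$ case is analogous and easier.

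Finally, for connectedness when $\dim M\geq 5$, I would repeat the argument with a path parameter: over the Banach manifold $\mathcal{P}$ of $C^1$ paths $[0,1]\to U$ joining two given points of $\mathcal{I}$, the map $(\gamma,t,p)\mapsto df_{\gamma(t)}(p)$ is transverse to $\mathcal{R}_0$ and $\mathcal{R}_1$ by the same mechanism, so its preimage of $\mathcal{R}_j$ projects to $\mathcal{P}$ by a Fredholm map of index $3-(2-j)(n-j)\leq 4-n<0$; hence a dense set of paths avoids the rank--degeneracy loci altogether, meaning $f_{\gamma(t)}$ is an immersion for every $t$, and since the endpoints already lie in the open set $\mathcal{I}$ this exhibits $\mathcal{I}$ as path--connected. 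The bookkeeping $n-1>2$ for density and $n-1>3$ for connectedness is exactly what produces the thresholds $\dim M\geq 4$ and $\dim M\geq 5$.
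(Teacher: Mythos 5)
Your overall architecture matches the paper's: openness is immediate from continuity of $(\mu',\nu')\mapsto f_{\mu',\nu'}$ into $C^1$, density comes from transversality of the derivative section to the rank-degeneracy loci of codimensions $n-1$ and $2n$ together with a dimension count, and connectedness for $n\geq 5$ comes from running the same count with one extra path parameter. (The paper packages the derivative as $\Psi(p,\mu,\nu)=\sigma(f_z(p))\in\mathbb{C}^n$ and uses the Parametric Transversality Theorem rather than Sard--Smale on a Fredholm projection, but that is only a difference of bookkeeping.)

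The gap is in the transversality lemma itself, at the step where you claim that for $\dot\nu'$ supported in a small ball around $f(p_0)$, with $p_0$ an injective regular point, ``the source term fills out an arbitrary section near $p_0$,'' so that the dual solution $G$ vanishes near $p_0$ and Aronszajn finishes the job. This is false, and the failure is not a technicality: the linearized source is $\langle\mathcal{G}(\dot\nu),X\rangle=\mu^{ij}\dot\Gamma_{\alpha\beta,\gamma}\,\tfrac{\partial f^\alpha}{\partial x_i}\tfrac{\partial f^\beta}{\partial x_j}X^\gamma$, a contraction of the metric variation against $df\otimes df$, and the image of $\dot\nu\mapsto\mathcal{G}(\dot\nu)$ over a neighbourhood of $p_0$ is a proper subspace of the sections supported there. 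Its annihilator is exactly characterized by Moore's lemma (Lemma \ref{Moorelemma}): the condition $\int\langle\mathbf{J}V,X\rangle\,dA=0$ for all such variations forces $X$ to be tangential and equal, locally, to the real part of a holomorphic section of the line bundle $\mathbf{L}\subset\mathbf{E}$ --- an infinite-dimensional space of obstructions, not $\{0\}$. Consequently your duality argument only gets you to ``$X$ is locally $\mathrm{Re}$ of a holomorphic section of $\mathbf{L}$,'' and the remaining (and essential) work is to analytically continue the imaginary part across the regular set via Hartman--Wintner, extend the resulting section $Z=X+iY$ meromorphically over the isolated singular points with a pole of order at most one at the distinguished point, and derive a contradiction from $\deg\mathbf{L}=2-2g+\sum_jk_j$ versus $\deg\mathbf{L}\geq-1+\sum_jk_j$, i.e.\ $g\leq 3/2$. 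This is also where the isolated-singularity hypothesis is genuinely used --- to continue $Y$ and control $Z$ at singular points --- not, as you suggest, to ``localize the perturbation analysis near the finitely many singular points.'' Without this Riemann--Roch-type contradiction your proof of the key lemma does not close.
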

\begin{thmc}
Suppose $\dim M \geq 5$, and let $(\mu,\nu)$ be such that $f_{\mu,\nu}$ is somewhere injective and has isolated singularities. Then there exists a neighbourhood $U\subset \M$ containing $(\mu,\nu)$ such that the space of harmonic embeddings in $U$ is open and dense. If $\dim M \geq 6$, then the space of harmonic embeddings in $U$ is also connected.
\end{thmc}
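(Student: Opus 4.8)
The strategy is to derive the immersion part from Theorem B and to treat injectivity by a parametric transversality argument for the two-point evaluation map; the hypothesis that $f_{\mu,\nu}$ is somewhere injective with isolated singularities is used both to invoke Theorem B and to run the transversality argument. Since $\mathfrak{M}^{*}$ is open and contains $(\mu,\nu)$ (Theorem A) and $\dim M\ge 5\ge 4$, Theorems A and B provide a neighbourhood $U\subset\M$ of $(\mu,\nu)$ which we may take inside $\mathfrak{M}^{*}$ and such that $V:=\mathfrak{M}^{\mathrm{imm}}\cap U$, the set of harmonic immersions in $U$, is open and dense in $U$, and connected since $\dim M\ge 5$. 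Every $(\mu',\nu')\in V$ gives a somewhere-injective immersion, hence a map injective on an open dense subset of $\Sigma$ (the Remark following Theorem A, the singular set being empty). An injective immersion of the closed surface $\Sigma$ is an embedding, and the embedding condition is $C^{1}$-open while $(\mu',\nu')\mapsto f_{\mu',\nu'}$ is continuous into $C(\Sigma,M)$; hence $\mathfrak{M}^{\mathrm{emb}}\cap U=\{(\mu',\nu')\in V: f_{\mu',\nu'}\ \text{is injective}\}$ is open in $U$. It remains to prove density, and then connectedness when $\dim M\ge 6$.

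For density, consider on $V$ the $C^{k}$ evaluation map
\[
\Phi\colon V\times\bigl(\Sigma\times\Sigma\setminus\Delta_{\Sigma}\bigr)\longrightarrow M\times M,\qquad
\Phi(\mu',\nu',p,q)=\bigl(f_{\mu',\nu'}(p),f_{\mu',\nu'}(q)\bigr),
\]
and let $\Delta_{M}\subset M\times M$ be the diagonal. I claim $\Phi$ is transverse to $\Delta_{M}$. Granting this, $\Phi^{-1}(\Delta_{M})$ is a $C^{k}$ Banach submanifold and the restriction to it of the projection $\pi\colon V\times(\Sigma\times\Sigma\setminus\Delta_{\Sigma})\to V$ is Fredholm of index $\dim(\Sigma\times\Sigma\setminus\Delta_{\Sigma})-\codim\Delta_{M}=4-n<0$. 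By the Sard--Smale theorem the regular values of $\pi|_{\Phi^{-1}(\Delta_{M})}$ form a residual, hence dense, subset $\Theta\subset V$; since the index is negative, for $(\mu',\nu')\in\Theta$ the fibre $\pi^{-1}(\mu',\nu')$, which is the double-point set $\{(p,q):f_{\mu',\nu'}(p)=f_{\mu',\nu'}(q),\,p\ne q\}$, is empty. Thus $f_{\mu',\nu'}$ is an injective immersion of $\Sigma$, i.e.\ an embedding, for $(\mu',\nu')\in\Theta$; as $\Theta$ is dense in $V$ and $V$ is dense in $U$, the set $\mathfrak{M}^{\mathrm{emb}}\cap U$ is dense in $U$.

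The main obstacle is the transversality claim $\Phi\pitchfork\Delta_{M}$, which I would establish by the scheme used for the immersion transversality in Theorem B, via variations of the target metric. Fix $(\mu',\nu',p,q)\in\Phi^{-1}(\Delta_{M})$ with $y:=f_{\mu',\nu'}(p)=f_{\mu',\nu'}(q)$ and $p\ne q$. Taking $\dot\mu'=0$ and letting $\dot f\in\Gamma(f_{\mu',\nu'}^{*}TM)$ be the derivative of $\nu'\mapsto f_{\mu',\nu'}$ in a direction $\dot\nu'$, transversality reduces to surjectivity onto $T_{y}M$ of
\[
(\dot\nu',\dot p,\dot q)\longmapsto \dot f(p)-\dot f(q)+df_{p}(\dot p)-df_{q}(\dot q).
\]
By the non-degeneracy in the Technical Assumption the Jacobi operator $L$ of $f_{\mu',\nu'}$ is invertible on the relevant H\"older spaces, so $\dot f=-L^{-1}\mathcal{F}\dot\nu'$, where $\mathcal{F}$ arises by differentiating the harmonic-map equation in the target metric. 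If the displayed map were not onto, some nonzero $\eta\in T_{y}^{*}M$ would annihilate its image; pairing with $\dot f$ and passing to formal adjoints shows $w:=L^{-1}(\delta_{p}\otimes\eta-\delta_{q}\otimes\eta)$ satisfies $\mathcal{F}^{*}w=0$, while $\mathcal{F}^{*}$ is, up to lower-order terms, an algebraic contraction of $w$ against $df_{\mu',\nu'}$. Reading this off at injective points $p'$ of $f_{\mu',\nu'}$ near $p$ --- which are dense since $f_{\mu',\nu'}$ is a somewhere-injective immersion --- forces $w\equiv 0$ on a punctured neighbourhood of $p$; but since $p\ne q$ the section $w$ has a genuine Green's-function singularity at $p$, a contradiction.

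For connectedness when $\dim M\ge 6$, take two embeddings $(\mu_{0},\nu_{0}),(\mu_{1},\nu_{1})\in\mathfrak{M}^{\mathrm{emb}}\cap U\subset V$ and join them by a path $\gamma$ in $V$, which is open and connected, hence path-connected. The image $\pi(\Phi^{-1}(\Delta_{M}))$ of the double-point locus has codimension at least $n-4\ge 2$ in $V$, and the endpoints of $\gamma$ lie in its complement. By the one-parameter form of the Sard--Smale argument above --- a path into $V$ is generically transverse rel its endpoints to $\pi|_{\Phi^{-1}(\Delta_{M})}$, and transversality to a Fredholm map of index $4-n\le-2$ forces disjointness from its image --- $\gamma$ can be perturbed rel endpoints, staying in the open set $V$, to a path along which $f_{\gamma(t)}$ has no double points, hence is an embedding, for every $t$. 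Therefore $\mathfrak{M}^{\mathrm{emb}}\cap U$ is path-connected.
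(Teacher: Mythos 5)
Your global architecture matches the paper's: reduce to transversality of the two-point evaluation map $\Phi(p,q,\mu,\nu)=(f_{\mu,\nu}(p),f_{\mu,\nu}(q))$ to the diagonal of $M\times M$, then run parametric transversality / Sard--Smale with the codimension count $4-\dim M<0$ for density and $5-\dim M<0$ for connectedness of paths. That part is fine and is essentially what the paper does.

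The gap is in your proof of the transversality claim itself, which is where all the real content of Theorem C lives (the paper's Lemma \ref{trans2}). You argue that if $\eta$ annihilates the image of the linearization, then the dual section $w=X$ satisfies an ``algebraic contraction against $df$'' condition at injective points, and that this forces $w\equiv 0$ on a punctured neighbourhood of $p$. That step is false. The variations $\mathcal{G}(\dot\nu)$ do \emph{not} span enough of $\Gamma(\mathbf{F})$ for the annihilator condition to kill $X$ pointwise: the condition $\int\langle\mathbf{J}V,X\rangle\,dA=0$ for all target-metric variations supported near an injective point only forces $X$ to be tangential to $f(\Sigma)$ and, by Moore's lemma, locally equal to the real part of a holomorphic section of the line bundle $\mathbf{L}\subset\mathbf{E}$ — a nontrivial kernel. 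The contradiction in the paper is global, not local: one must analytically continue the ``imaginary part'' of $X$ over all of $\Sigma\setminus\mathcal{S}$ (this is exactly where the isolated-singularities hypothesis is used — your argument never actually uses it in the transversality step, which is a warning sign), extend over the singular points of $f$, observe that the logarithmic asymptotics of the zeroth-order reproducing kernel are too weak to produce a genuine pole, and conclude that $Z=X+iY$ is a global holomorphic section of $\mathbf{L}$, hence a Jacobi field, contradicting the Technical Assumption. Without this bundle-theoretic step (or an equivalent Riemann--Roch/degree argument), your claimed local vanishing of $w$, and hence the transversality of $\Phi$, is unproven.
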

We obtain the following corollary.
\begin{cord}
If $\dim M \geq 4$, then any somewhere injective harmonic map with isolated singularities can be approximated by harmonic immersions. If $\dim M \geq 5$, then any such harmonic map can be approximated by harmonic embeddings. 
\end{cord}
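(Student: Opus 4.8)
The plan is to deduce the corollary directly from Theorems B and C, using as the only extra ingredient the continuity (indeed $C^k$-smoothness) of the assignment $(\mu,\nu)\mapsto f_{\mu,\nu}$ into $C(\Sigma,M)$, which is the Eells--Lemaire statement recalled in the introduction. So the corollary will be a formal consequence, and I expect no serious obstacle here: all the real content sits in Theorems A, B, and C.

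For the immersion statement, fix $(\mu,\nu)$ with $f_{\mu,\nu}$ somewhere injective and with isolated singularities, and assume $\dim M\geq 4$. By Theorem B there is a neighbourhood $U\subset\M$ of $(\mu,\nu)$ in which the set $U^{\mathrm{imm}}$ of pairs $(\mu',\nu')$ with $f_{\mu',\nu'}$ an immersion is open and dense. Shrinking $U$ if necessary so that it also lies in a neighbourhood of $(\mu,\nu)$ on which $(\mu',\nu')\mapsto f_{\mu',\nu'}$ is $C^k$, and hence continuous into $C^{r+1,\alpha}(\Sigma,M)$, density of $U^{\mathrm{imm}}$ yields a sequence $(\mu_i,\nu_i)\in U^{\mathrm{imm}}$ with $(\mu_i,\nu_i)\to(\mu,\nu)$, and then $f_{\mu_i,\nu_i}\to f_{\mu,\nu}$ in $C^{r+1,\alpha}$, in particular in $C^1$. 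Thus $f_{\mu,\nu}$ is a limit of harmonic immersions, each harmonic for a metric pair near $(\mu,\nu)$, which is the assertion.

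For embeddings, assume $\dim M\geq 5$ and run the identical argument with Theorem C in place of Theorem B, getting a neighbourhood $U'$ of $(\mu,\nu)$ in which the harmonic embeddings are dense and a sequence of them converging to $f_{\mu,\nu}$. For the final clause, still with $\dim M\geq 5$, put $V=U\cap U'$, an open neighbourhood of $(\mu,\nu)$. On $V$ both the set of pairs with $f_{\mu',\nu'}$ an immersion and the set with $f_{\mu',\nu'}$ an embedding are open and dense, since an open dense subset stays open and dense when intersected with an open set; and in any topological space the intersection of two open dense sets is open and dense (so no Baire argument is even needed). Hence the pairs $(\mu',\nu')\in V$ for which $f_{\mu',\nu'}$ is simultaneously an embedding and an immersion form a dense subset of $V$, and choosing a sequence in it converging to $(\mu,\nu)$ and invoking the continuity statement once more produces harmonic embedded immersions converging to $f_{\mu,\nu}$ in $C^1$.

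The only thing to watch is the bookkeeping of topologies: ``approximated by'' should be understood as ``a $C^1$-limit of'' (equivalently a $C^{r+1,\alpha}$-limit, since $r\geq 2$), where the approximating maps are harmonic with respect to metric pairs tending to $(\mu,\nu)$; with this convention the argument above is complete. It is also worth remarking that a harmonic embedding need not a priori be an immersion --- it may drop rank at isolated points while remaining injective --- so the last sentence is a genuine, if immediate, strengthening rather than a tautology.
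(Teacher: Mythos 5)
Your argument is correct and is exactly the intended deduction: the paper states Corollary D without proof as an immediate consequence of Theorems B and C, and your route (density of immersions/embeddings in the neighbourhoods $U$ from those theorems, plus the Eells--Lemaire continuity of $(\mu,\nu)\mapsto f_{\mu,\nu}$, plus the intersection of two open dense sets for the final clause) is precisely what is meant. Your closing remarks on the topology of approximation and on embeddings not automatically being immersions are accurate and appropriately brief.
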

In Section 6.5, we explain our use of the hypothesis that $f$ has isolated singularities and the possibility of removing it. We propose the following question.
\begin{queE}
Do the weak Whitney theorems hold for harmonic surfaces? That is,
\begin{enumerate}
    \item if $\dim M\geq 4$, is the space of harmonic immersions in $\M$ open and dense? If $\dim M\geq 5,$ is the space of harmonic immersions connected? 
    \item if $\dim M\geq 5$, is the space of harmonic embeddings in $\M$ open and dense? If $\dim M\geq 6,$ is the space of harmonic embeddings connected? 
\end{enumerate}
\end{queE}
The weak Whitney theorems \cite[Theorem 2]{Wh} state that a regular enough map between manifolds $g: X\to Y$ can be approximated by immersions if $\dim Y \geq 2\dim X$, and by embeddings if $\dim Y \geq 2\dim X + 1$. Combined with the Whitney trick, they yield the Whitney immersion theorem and the Whitney embedding theorem. One can give modern proofs of the weak theorems via transversality theory (see also Remark \ref{whitneyex}). The question has a positive answer for Moore's spaces of minimal surfaces \cite[Theorem 5.1.1 and 5.1.2]{M2}. It is also reasonable to ask about strengthening our hypothesis.
\begin{queF}
    Can we say more if we assume that the homotopy class $\mathbf{f}$ is incompressible or essential? 
\end{queF}
Such assumptions on $\mathbf{f}$ may give us more tools to probe Question E.

\begin{subsection}{Outline of paper and proofs.} In the next section, we define harmonic maps and associated Jacobi operators, and give examples of spaces of harmonic surfaces. These examples mostly require $\M(M)$ to be a space of non-positively curved metrics. We prove Proposition \ref{iso} to show that some positive curvature is allowed. In Section 3, we compute precise expressions near singularities for reproducing kernels for Jacobi operators. We proceed by constructing parametrices for some objects that resemble Green's operators.

The proof of Theorem A is contained in Sections 4 and 5. Section 4 is the reduction to a transversality lemma and Section 5 is the proof of that lemma. Since the details are technical, we explain the proof here. For disjoint open disks $P,Q\subset \Sigma$ and $\delta>0$, we set $$\mathcal{D}(P,Q,\delta)=\{(\mu,\nu)\in\mathfrak{M}: d_\nu(f_{\mu,\nu}(P),f_{\mu,\nu}(Q))>\delta, P,Q\subset \Sigma^{\SR}(f_{\mu,\nu})\},$$ where $\Sigma^{\SR}(f)$ is the super-regular set, to be defined in Section 4. For Theorem A, it is enough to prove that somewhere injective maps are open, dense, and connected in restriction to $\mathcal{D}(P,Q,\delta)$'s. We define a map $$\Theta : \Sigma^2 \times (P\times Q\times \mathcal{D})\to M^2\times M^2, \hspace{1mm} \Theta(r,s,p,q,\mu,\nu) = (f_{\mu,\nu}(r), f_{\mu,\nu}(s), f_{\mu,\nu}(p), f_{\mu,\nu}(q)).$$ If $\Theta(r,s,p,q,\mu,\nu)$ avoids the diagonal in $M^2\times M^2$, which we denote by $L$, then $f_{\mu,\nu}$ is somewhere injective. So, if we show that $\Theta$ is transverse to $L$, then the preimage has codimension $2\dim M \geq 6$. Since $\Sigma^2$ has dimension $4$, a general transversality result, Proposition \ref{targ}, shows that the projection of $\Theta^{-1}(L)$ to $\mathcal{D}$ is dense and connected. 

The real substance of the proof is to show that $\Theta$ is transverse to $L$. Given $f_{\mu,\nu},$ we set $\mathbf{F}$ to be the pullback bundle, with space of sections $\Gamma(\mathbf{F})$. A variation through harmonic maps starting at $f_{\mu,\nu}$, or just a harmonic variation, is any section of $\Gamma(\mathbf{F})$ obtained by choosing a path in $\mathfrak{M}$ based at $(\mu,\nu)$ and taking the derivative of the corresponding path in $C(\Sigma,M).$ We argue by contradiction and suppose that $\Theta$ is not a submersion at points that map to $L$. Invoking an existence result for reproducing kernels, this implies that at some pair of metrics $(\mu,\nu)$, there is a non-zero section $X:\Sigma \to \Gamma(\mathbf{F})$ such that for all harmonic variations $V\in\Gamma(\mathbf{F})$,
\begin{equation}\label{outline}
    \int \langle \mathbf{J}V,X\rangle dA_\mu=0.
\end{equation}
Above, $\mathbf{J}$ is the Jacobi operator for $f_{\mu,\nu}$. $X$ satisfies the Jacobi equation away from its singularities, and we show that these singularities can be resolved, making use of the local expressions from Section 3. $X$ thus extends to a global Jacobi field, which is our contradiction. 

To resolve the singularities, we vary the target metric on $M$ to find harmonic variations $V$ such that (\ref{outline}) gives us good information. One could also vary the source metric on $\Sigma$, but it shouldn't work too well: in some situations where the homotopy class $\mathbf{f}$ is compressible and the original harmonic surface $f_{\mu,\nu}(\Sigma)\subset M$ is totally geodesic, we will have $f_{\mu,\nu}(\Sigma) = f_{\mu+\dot{\mu},\nu}(\Sigma)$ for all admissible variations $\dot{\mu}$. Thus, we can't in general perturb away from a nowhere injective map.

The singularities of $X$ are at intersection points of harmonic disks $f(\Omega_1), f(\Omega_2)\subset M$, and we divide into cases: either the disks are tangential at the intersection point or they are not. When not only are they tangential but also $f(\Omega_1)=f(\Omega_2)$ and $f|_{\Omega_2}^{-1}\circ f|_{\Omega_1}$ is conformal, then our approach simply cannot work. To give one example of what can go wrong, if $f$ factors through a holomorphic branched covering map (in which case the homotopy class is compressible) and $\Omega_1$ and $\Omega_2$ are related by a covering transformation, then no matter how we vary the target metric, the harmonic maps will continue to factor in this way and identify the two sets. This is where we use the factorization theorem \cite[Theorem 1]{me} to say that the set of metrics giving rise to harmonic maps with this property can be removed from the space without disconnecting it. The tangential case is then settled using the super-regular condition (Section 4). For the non-tangential case, we choose variations supported in what we call ``fat cylinders" that give $\mathbf{J}V$ more support near some places than others.

In Section 6, we prove Theorems B and C, yet again by transversality theory. Right now, we explain only Theorem B, since Theorem C is a similar argument. We trivialize the complexified tangent bundle of $M$ and let $\sigma$ be the projection onto the $\mathbb{C}^n$ factor. Then we define a map $$\Psi: \tilde{\Sigma} \times \M \to \mathbb{C}^n, \Psi(p,\mu,\nu) =\sigma(f_z(p)),$$ where $z$ is the uniformizing parameter for the metric $\mu$ on the universal cover $\tilde{\Sigma}$, and $f_z=df\Big (\frac{\partial}{\partial z}\Big)$. We try to show that $\Psi$ is transverse to $\{0\}$ and the submanifold of $\mathbb{C}^n$ consisting of vectors whose real and imaginary parts are collinear. We achieve this near $(\mu,\nu)$ that yield somewhere injective harmonic maps with isolated singularities. By transversality theory, this gives Theorem B. 
\begin{remark}\label{whitneyex}
    Perhaps it's worth pausing to sketch a proof of Whitney's weak immersion theorem, which should explain the dimension bounds for Theorem B. If we replace $\mathfrak{M}$ with $C(\Sigma,M)$, then the map $\Psi$ is always transverse. Since the set of rank $1$ vectors in $\mathbb{C}^n$, say $Y,$ has codimension $n-1,$ by the Transversality Theorem for Banach manifolds, $\Psi^{-1}(Y)$ is a submanifold of dimension $n-1$. Since $\Sigma$ has dimension $2,$ the projection of $\Psi^{-1}(Y)$ to $C(\Sigma,M)$ should morally have codimension at least $(n-1)-2=n-3.$ So if $n\geq 4,$ the complement of this projection should be dense, and if $n\geq 5$ then the complement should be connected. These last statements are made precise and realized through Proposition \ref{targ}. The dimension bounds for Theorem C can be derived similarly.
\end{remark}

As in the proof of Theorem A, we suppose transversality fails, and then we find there must be a section $X:\Sigma\to \Gamma(\mathbf{F})$ that is annihilated by all $\mathbf{J}V$, where $V$ ranges over variations through harmonic maps.

The contradiction is different from that of Theorem A. We attach a particular holomorphic structure to the complexification $\mathbf{E}$ of $\mathbf{F}$. Using somewhere injectivity and a lemma of Moore \cite{M1}, we find there is an open set $\Omega$ on which $X$ is the real part of a holomorphic section of a special holomorphic line bundle $\mathbf{L}\subset \mathbf{E}.$ Making use of the isolated singularity condition, we analytically continue the ``imaginary part," so that $X$ is the real part of a globally defined meromorphic section $Z$ of $\mathbf{L}$.  From Section 3 we see that $Z$ has at most a simple pole at one point. We then check that the order of this section does not match up with the degree of $\mathbf{L}$. This final contradiction can also be seen through Riemann-Roch.

In an appendix, we state general transversality theorems for Banach manifolds and prove Proposition \ref{targ}, the general result we use to deduce density and connectedness results.
\end{subsection}

\begin{subsection}{Acknowledgements}
    in the case of $3$-manifolds, an argument for the first theorem is given in an unpublished manuscript of Vladimir Markovi{\'c} \cite{Ma}. The proof had a few small issues, which have been resolved here, and also more needs to be done for the argument to work in all dimensions. 

I thank Vlad for allowing me to absorb content from his manuscript, and for the many discussions that we had related to this project. This paper is intended to be independent and self-contained, and the reader should not have to consult \cite{Ma}. I have tried to keep similar notation.

I would also like to thank the anonymous referee for a careful reading, helpful suggestions that have improved the paper, and pointing out many typos and minor errors.

This paper was mostly written while I was a PhD student at the California Institute of Technology, and the first version was submitted while I was simultaneously a visiting student at the University of Oxford. At the time of completing this final version, I am a postdoc at the University of Luxembourg and funded by the FNR grant O20/14766753, $\textit{Convex Surfaces in Hyperbolic Geometry.}$
\end{subsection}

\end{section}

\begin{section}{Spaces of harmonic surfaces}
\begin{subsection}{Conventions}
Given two non-negative functions defined on some set $X$, we say $$f\lesssim g$$ if there exists a constant $C>0$ such that $f(x)\leq C g(x)$ for all $x\in X$. We define $f\gtrsim g$ similarly. If $X=\mathbb{R}$, and $f,g,h$ are functions from $X\to [0,\infty)$, we write $$f= g + O(h)$$ to mean $|f-g|\lesssim h$. Given Banach spaces $(B_i, ||\cdot||_i)$, equipped with an inclusion map $B_1\to B_2$, we write $$||V||_2\lesssim ||V||_1$$ to mean there is a uniform constant $C>0$ such that for all $V\in B_1$, we have $||V||_2\leq C||V||_1$. 
\end{subsection}

\begin{subsection}{Harmonic surfaces} Throughout, the space of $C^{n,\alpha}$ sections of a $C^{n,\alpha}$ vector bundle $V$ over $M$ is denoted $\Gamma(V)$. Here we are allowing $n=\infty$ and $n=\omega$ (real analytic). Given a map $f:\Sigma \to M$, we let $\mathbf{F}=f^*TM$ be the pullback of $TM$ over $\Sigma$. If $f$ is $C^{n,\alpha}$ then $\mathbf{F}$ is a $C^{n,\alpha}$ bundle. The derivative $df$ may be viewed as a section of the endomorphism bundle $\mathbf{T}=T^*\Sigma \otimes \mathbf{F}$. 

Fix Riemannian metrics $\mu,\nu$ on $\Sigma$ and $M$ respectively. By $\nabla^{\mathbf{F}}$ we denote the pullback connection of the Levi-Civita connection $\nabla^\nu$ on $M$. The Levi-Civita connection $\nabla^\mu$ on $T\Sigma$ dualizes to a connection on $T^*\Sigma$, and this tensors with $\nabla^\mathbf{F}$ to form a connection $\nabla^\mathbf{T}$ on the tensor product $\mathbf{T}$.
\begin{defn}
$f:(\Sigma,\mu)\to (M,\nu)$ is harmonic if the tension field $\tau=\tau(f,\mu,\nu)\in \Gamma(\mathbf{F})$ given by $$\tau = \tr_\mu \nabla^\mathbf{T}df$$ satisfies $\tau=0$.
\end{defn}

 Under the technical assumption, $f_{\mu,\nu}$ will be the unique harmonic map from $(\Sigma,\mu)\to (M,\nu)$. When working with fixed $(\mu,\nu)$ we sometimes write $f=f_{\mu,\nu}$. Let $TM^\mathbb{C}=TM\otimes \mathbb{C}$ denote the complexification of the tangent bundle of $M$ and $\mathbf{E}:=f^*TM^\mathbb{C}$ the pullback bundle. The connections $\nabla^\mathbf{F}$ and $\nabla^\mathbf{E}$ will be used quite often, so henceforward we condense $$\nabla := \nabla^\mathbf{F}, \hspace{1mm} \nabla^\mathbf{E}$$ when the context is clear. A section $W\in \Gamma(\mathbf{E})$ may be uniquely written as $W=\textrm{Re}(W)+i\textrm{Im}(W)$, where $\textrm{Re}(W), \textrm{Im}(W)\in \Gamma(\mathbf{F})$. 
 
  Any hyperbolic metric $\mu$ gives rise to a unique Riemann surface structure in which $\mu$ is conformal. Let $z=x+iy$ be a local complex parameter on an open subset of $\Sigma$ and set $$\frac{\partial}{\partial z}= \frac{1}{2}\Big ( \frac{\partial}{\partial x}-i\frac{\partial}{\partial y}\Big ), \hspace{1mm} \frac{\partial}{\partial \overline{z}}= \frac{1}{2}\Big ( \frac{\partial}{\partial x}+i\frac{\partial}{\partial y}\Big ).$$ When working in such a coordinate, we use the notation $\nabla_x = \nabla_{\frac{\partial}{\partial x}}$, $\nabla_y = \nabla_{\frac{\partial}{\partial y}}$, $\nabla_z = \nabla_{\frac{\partial}{\partial z}}$. We define local sections of $\Gamma(\mathbf{E})$ by $df\Big (\frac{\partial }{\partial x}\Big )=f_x$, $df(\frac{\partial }{\partial y})=f_y$, and $$df(\frac{\partial }{\partial z})=\frac{1}{2}df\Big (\frac{\partial }{\partial x} - i\frac{\partial }{\partial y}\Big )= \frac{1}{2}(f_x - if_y) = f_z.$$ One can check that
 \begin{equation}
     (f\circ h)_w = (f_z\circ h)h_w
 \end{equation}
for any holomorphic map such that $h(w)=z$. Therefore, the expression $f_zdz$ is a globally defined $\mathbf{E}$-valued $(1,0)$-form on $\Sigma$.

From a classical theorem of Koszul and Malgrange, the complex vector bundle $\mathbf{E}$ admits a unique holomorphic structure such that the $(0,1)$-component of the connection $\nabla^\mathbf{E}$ is the standard $\overline{\partial}$-operator. In the complex coordinate, the harmonic map equation reduces to $$\nabla_{\overline{z}}f_z = 0.$$ That is, $f_z$ is a local holomorphic section of $\mathbf{E}$. 
 \end{subsection}

 \begin{subsection}{The Jacobi operator} 
The equation $\tau=0$ arises as the Euler-Lagrange equation for the Dirichlet energy functional (defined over a suitable Sobolev space). The tension field may be seen as a map $$\tau : \M\times C(\Sigma,M)\to \Gamma(\mathbf{F}).$$ For $(\mu,\nu)$ fixed, the derivative in the $C(\Sigma,M)$ direction is the Jacobi operator \cite{EL}, which we are about to define. The Dirichlet energy is non-degenerate--or the technical assumption from the introduction is satisfied--if and only if the Jacobi operator has no kernel. 
 
 Let $f:(\Sigma,\mu)\to (M,\nu)$ be a $C^2$ (not necessarily harmonic) map and as before set $\mathbf{F}=f^*TM$. Let $\Delta$ denote the Laplacian induced by the connection $\nabla^{\mathbf{F}}$ and $R=R^M$ the curvature tensor of the Levi-Civita connection of $\nu$. The Jacobi operator $\mathbf{J}_f=\mathbf{J}:\Gamma(\mathbf{F})\to \Gamma(\mathbf{F})$ is defined $$\mathbf{J}V = \Delta V - \tr_\mu R(df,V)df \hspace{1mm} , \hspace{1mm} V\in \Gamma(\mathbf{F}).$$  If $z=x+iy$ is a local complex parameter and the conformal density is $\mu$, then 
\begin{equation}\label{18}
    \mathbf{J}V=-\nabla_x\nabla_x V - \nabla_y\nabla_y V - |\mu|^{-1}(R(f_x,V)f_x + R(f_y,V)f_y).
\end{equation}
 The Jacobi operator is a second order strongly elliptic linear operator and it is essentially self-adjoint in the sense that $$\int_\Sigma \langle \mathbf{J} V, W \rangle dA = \int_\Sigma \langle V,\mathbf{J}W\rangle dA$$ for all $V,W\in \Gamma(\mathbf{F})$. Above, recall that $\langle \cdot ,\cdot \rangle = \langle \cdot ,\cdot \rangle_\nu$ is the inner product on $\mathbf{F}$ induced by the metric $\nu$ on $M$. The integration over $\Sigma$ is with respect to the volume form $dA=dA_\mu$. 
 \begin{remark}
 The assumption $r\geq 3$ guarantees the coefficients of the operator are at least $C^2$. This is relevant for the regularity theory, and we use this implicitly throughout the paper.
 \end{remark}
\end{subsection}
\begin{subsection}{Calculus on vector bundles}
The following Banach spaces will come into play.
 \begin{itemize}
     \item For $1\leq p <\infty$, $(L^p(\mathbf{F}), ||\cdot||_p)$ is the space of $L^p$-bounded measurable sections of $\mathbf{F}$.
     \item For $k\in \mathbb{Z}_+$, $1\leq p<\infty$,  $(W^{k,p}(\mathbf{F}),||\cdot||_{k,p})$ is the Sobolev space of $k$-times weakly differentianble sections with $L^p$ derivatives with respect to the Levi-Civita connection.
       \item For $k\in \mathbb{Z}_+, \alpha\in (0,1)$, $(C^{k,\alpha}(\mathbf{F}), ||\cdot||_{k,\alpha})$ is the space of $k$-times differentiable sections whose $k^{th}$ derivatives are $\alpha$-H{\"o}lder.
       \item We can define these spaces in restriction to any open set $\Omega\subset \Sigma$. For $L^p(\mathbf{F}|_\Omega)$, we use the notation $||\cdot||_{p,\Omega}$, and likewise for the other Banach spaces.
 \end{itemize}
Above, if the vector bundle is only $C^{n,\alpha}$, we restrict $k\leq n$. For precise definitions and other basic facts, see \cite[Chapter 10]{Ni}. If we choose a different metric or connection on $\mathbf{F}$, the relevant Sobolev spaces are equal as sets of sections, and the identity map is bicontinuous. Thus, it is unambiguous to write $W^{k,p}(\mathbf{F})$ (and likewise for the other spaces), while not specifying the choices involved. 

Now we recall some results relevant to the Jacobi operator.
A Jacobi field is a section $V\in \Gamma(\mathbf{F})$ such that $\mathbf{J}V=0$. We again refer the reader to \cite[Chapter 10]{Ni}. From the basic elliptic theory, essential self-adjointness implies the following.
 \begin{prop}
Suppose there are no non-zero Jacobi fields. Then for every $p>1$ and $0<\alpha<1$, the operator $\mathbf{J}$ extends to a family of isomorphisms $\mathbf{J}: W^{2,p}(\mathbf{F})\to L^p(\mathbf{F})$, $\mathbf{J}: C^{2,\alpha}(\mathbf{F})\to C^{0,\alpha}(\mathbf{F})$. Each such isomorphism preserves the subspace of smooth sections.
\end{prop}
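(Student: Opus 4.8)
The plan is to derive both isomorphisms from the general linear elliptic theory for the strongly elliptic, essentially self-adjoint operator $\mathbf{J}$ on the closed surface $\Sigma$, using the hypothesis that $\mathbf{J}$ has trivial kernel.

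\textbf{Step 1: Fredholm property and index zero.} First I would invoke the standard theory (e.g.\ \cite[Chapter 10]{Ni}) that a second order strongly elliptic linear operator on a compact manifold without boundary gives bounded Fredholm maps $\mathbf{J}\colon W^{2,p}(\mathbf{F})\to L^p(\mathbf{F})$ and $\mathbf{J}\colon C^{2,\alpha}(\mathbf{F})\to C^{0,\alpha}(\mathbf{F})$. Boundedness is immediate since the coefficients are at least $C^2$ (using $r\geq 3$, cf.\ the remark above). For the index, essential self-adjointness means the $L^2$-formal adjoint of $\mathbf{J}$ is $\mathbf{J}$ itself; since the (Schauder or $L^p$) cokernel of an elliptic operator on a closed manifold is identified with the kernel of the formal adjoint acting on smooth sections, $\operatorname{coker}\mathbf{J}\cong \ker \mathbf{J}$, so the Fredholm index is $0$.

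\textbf{Step 2: Trivial kernel forces isomorphism.} By the hypothesis there are no non-zero Jacobi fields, so $\ker\mathbf{J}=0$ on smooth sections; elliptic regularity upgrades any $W^{2,p}$ or $C^{2,\alpha}$ element of the kernel to a smooth one, so the kernel is trivial in each of these Banach spaces as well. Combined with index zero from Step 1, $\mathbf{J}$ is injective with closed range of finite codimension equal to $\dim\ker\mathbf{J}=0$, hence surjective; the bounded inverse theorem (open mapping theorem for Banach spaces) then gives that $\mathbf{J}^{-1}$ is bounded, so each map is a Banach space isomorphism. I would remark that the existence piece — solving $\mathbf{J}V=W$ — can alternatively be seen directly: since $\ker\mathbf{J}=0$, the cokernel vanishes by Step 1, so $W$ is always in the range.

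\textbf{Step 3: Preservation of smooth sections.} Finally, if $W\in\Gamma(\mathbf{F})$ is smooth (or $C^{r,\alpha}$ to the extent the bundle allows), then the unique solution $V$ to $\mathbf{J}V=W$ lies in $W^{2,p}$ for all $p$, and interior elliptic (Schauder and $L^p$) regularity bootstraps $V$ to have two more derivatives than $W$ locally; iterating, $V$ is as regular as the coefficients of $\mathbf{J}$ and the datum $W$ permit, in particular $C^\infty$ when $W$ is. This shows each isomorphism restricts to a bijection on the subspace of smooth sections.

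The only mild subtlety — not really an obstacle — is bookkeeping the regularity of the coefficients of $\mathbf{J}$: its zeroth-order term involves $R^M$ composed with $f_{\mu,\nu}$ and the first derivatives of $f_{\mu,\nu}$, so the coefficients are $C^{r-1,\beta}$-ish rather than smooth, which caps how far the bootstrap in Step 3 can go and is exactly why the paper records $r\geq 3$; but for the stated conclusion (isomorphism on the fixed Banach spaces, preservation of $C^\infty$ sections when they exist, e.g.\ for real-analytic or smooth data in the relevant constructions) this is harmless and I would simply note it.
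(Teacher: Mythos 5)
Your proposal is correct and is essentially the argument the paper has in mind: the paper gives no detailed proof, simply asserting that the proposition follows "from the basic elliptic theory" and essential self-adjointness, with a citation to \cite[Chapter 10]{Ni}, and your Fredholm-index-zero plus trivial-kernel plus elliptic-bootstrap argument is exactly that standard chain of reasoning. Your closing remark on coefficient regularity also matches the paper's own remark that $r\geq 3$ is what makes the regularity theory go through.
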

The result below is a consequence of the Weyl lemma for linear elliptic operators.
\begin{prop}
 Let $\Omega\subset \Sigma$ be open and $V$ be a measurable section over $\Omega$ such that $||V||_{p,\Omega}<\infty$ for some $1<p<\infty$. If $\mathbf{J}V=0$ weakly on $\Omega$, then $V$ is as regular as the bundle $\Gamma(\mathbf{F})$, and $\mathbf{J}V\equiv 0$ on $\Omega$.
\end{prop}

\end{subsection}

\begin{subsection}{Examples of spaces of harmonic surfaces}
Here we list some examples of manifolds $M$ and spaces of metrics $\M(M)$ satisfying the technical assumption.
\begin{example}
$M$ is a closed $n$-manifold that admits a metric of negative curvature, with $\M(M)$ consisting of negatively curved metrics.
\end{example}
In this case, Sampson proves in \cite[Theorem 4]{S} that there are no smooth Jacobi fields. In fact, he proves a more general result.
\begin{thm}[Sampson, Theorem 4 in \cite{S}]
Let $(M,\nu)$ be a closed Riemannian manifold with non-positive curvature. Suppose $f:(\Sigma,\mu)\to (M,\nu)$ is an admissible harmonic map and there is at least one point $p$ at which all sectional curvatures of $M$ at $f(p)$ are strictly negative. Then there are no non-zero Jacobi fields.
\end{thm}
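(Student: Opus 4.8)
The plan is to show that the existence of a non-zero Jacobi field $V$ forces all sectional curvatures of $M$ to vanish along the image of $f$ wherever $df$ has rank $\geq 2$, contradicting the hypothesis at $p$ combined with admissibility. First I would use the Weitzenb\"ock-type identity: for any Jacobi field $V$, compute $\Delta |V|^2 = 2\langle \Delta V, V\rangle + 2|\nabla V|^2$, and substitute $\Delta V = \tr_\mu R(df,V)df$ from $\mathbf{J}V=0$. This gives
\begin{equation*}
\tfrac{1}{2}\Delta |V|^2 = |\nabla V|^2 + \tr_\mu \langle R(df,V)df, V\rangle = |\nabla V|^2 - \tr_\mu \langle R(df,V)V, df\rangle,
\end{equation*}
where the last term, in a local conformal coordinate, is $-|\mu|^{-1}\big(\langle R(f_x,V)V,f_x\rangle + \langle R(f_y,V)V,f_y\rangle\big)$. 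Since $M$ has non-positive curvature, $\langle R(W,V)V,W\rangle \leq 0$ for all $V,W$, so each such term is $\geq 0$; hence $\Delta|V|^2 \geq 0$, i.e. $|V|^2$ is subharmonic on the closed surface $\Sigma$. By the maximum principle $|V|^2$ is constant, and therefore $\nabla V \equiv 0$ and the curvature term vanishes identically:
\begin{equation*}
\langle R(f_x,V)V,f_x\rangle = \langle R(f_y,V)V,f_y\rangle = 0 \quad \text{everywhere.}
\end{equation*}

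Next I would exploit these two facts. Since $V$ is parallel ($\nabla V\equiv 0$) and non-zero, $|V|$ is a nonzero constant. Pointwise, non-positivity of curvature means the quadratic form $W\mapsto -\langle R(W,V)V,W\rangle$ is positive semidefinite, so its vanishing on $f_x$ and $f_y$ forces $R(f_x,V)V = R(f_y,V)V = 0$ (a positive semidefinite form vanishing on a vector kills that vector in the associated self-adjoint operator). Now at the point $p$ where all sectional curvatures at $f(p)$ are strictly negative, the operator $W\mapsto R(W,V(p))V(p)$ is \emph{negative definite} on the orthogonal complement of $V(p)$, hence $R(W,V(p))V(p)=0$ forces $W \parallel V(p)$; thus $f_x(p)$ and $f_y(p)$ are both proportional to $V(p)$. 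I would then show $df_p$ actually vanishes or has rank $\leq 1$ there, and more importantly propagate the conclusion: because $V$ is parallel, one shows (e.g. by differentiating the relation $R(df,V)V=0$, or by a connectedness/unique-continuation argument as in Sampson) that $df$ has rank $\leq 1$ on all of $\Sigma$, i.e. the harmonic map factors through a geodesic.

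Finally I would derive the contradiction with admissibility: if $df$ has rank $\leq 1$ everywhere, then $f$ maps $\Sigma$ into (the closure of) a single geodesic, so $f$ factors up to homotopy through a map to $S^1$ or $\mathbb{R}$, and $\mathbf{f}_*(\pi_1\Sigma)$ is cyclic, hence abelian — contradicting that $\mathbf{f}$ is admissible. Actually one must be slightly careful: rank $\leq 1$ harmonic maps need not literally have geodesic image unless one argues the rank-one locus is open and the image of each component is a geodesic segment, then uses that $\Sigma$ is connected and $f$ is continuous; but the upshot for $\pi_1$ is the same. The main obstacle I anticipate is the propagation step — upgrading the pointwise rank bound at $p$ (where strict negativity holds) to a global statement on all of $\Sigma$ (where curvature may only be non-positive, so the operator $R(\cdot,V)V$ can have a large kernel). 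This is handled by combining $\nabla V\equiv 0$ with the Bochner-vanishing $R(f_x,V)V=R(f_y,V)V=0$ and a continuity/unique-continuation argument: the set where $df$ has rank $\leq 1$ is closed, contains $p$, and one shows it is also open using that $f_x,f_y$ must lie in the kernel of $R(\cdot,V)V$ together with the structure of harmonic maps near a point — essentially Sampson's original argument, which I would cite or reproduce in this generality.
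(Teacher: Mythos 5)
Your first half is fine and is essentially the standard Bochner argument that the paper itself reproduces in its proof of Proposition \ref{iso}: there one integrates $\langle \mathbf{J}V,V\rangle=0$ over $\Sigma$ and integrates by parts to get $\int_\Sigma|\nabla V|^2 = \int_\Sigma\langle \tr_\mu R(df,V)df,V\rangle \le 0$, whereas you run the pointwise identity for $\Delta|V|^2$ through the maximum principle; both yield the same two facts, namely $\nabla V\equiv 0$ and the pointwise vanishing of $\langle R(f_x,V)V,f_x\rangle$ and $\langle R(f_y,V)V,f_y\rangle$. (Only be careful that the paper's $\Delta$ in $\mathbf{J}V=\Delta V-\tr_\mu R(df,V)df$ is the \emph{nonnegative} connection Laplacian, per equation (\ref{18}), so your substitution needs the opposite sign from the one you wrote; the conclusion you draw is nonetheless the correct one.)

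The gap is in your endgame. From strict negativity at $f(p)$ you conclude $f_x(p),f_y(p)\parallel V(p)$, i.e.\ $\rank df_p\le 1$, and you then need to propagate ``$\rank df\le 1$'' over all of $\Sigma$ to contradict admissibility. As you yourself observe, this propagation fails where the curvature is only non-positive: there $R(\cdot,V)V$ may vanish identically and imposes no constraint on $df$, so the rank-$\le 1$ locus has no reason to be open, and the sketched ``closed, contains $p$, and also open'' argument does not go through. The detour is unnecessary: you already know $\nabla V\equiv 0$, so $|V|$ is constant, and it suffices to find a \emph{single} point where $V=0$. Since $\mathbf{f}$ is admissible, Sampson's Theorem 3 (quoted in Section 4 of the paper) says the regular set of $f$ is open and dense; strict negativity of the sectional curvatures is an open condition on $M$, so one can pick a regular point $p'$ with $f(p')$ still in the region of strictly negative curvature. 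At $p'$ the vectors $f_x(p'),f_y(p')$ span a $2$-plane, so if $V(p')\ne 0$ at least one of them is linearly independent from $V(p')$, and the corresponding sectional curvature term $\langle R(f_x,V)V,f_x\rangle$ or $\langle R(f_y,V)V,f_y\rangle$ is strictly negative, contradicting the pointwise vanishing. Hence $V(p')=0$, and constancy of $|V|$ gives $V\equiv 0$. This is exactly how the paper closes the analogous argument in Proposition \ref{iso} (``we extract a neighbourhood $\Omega$ on which $f$ is a regular embedding\,\dots''); admissibility enters only through the density of regular points, not through the $\pi_1$ computation you propose at the end.
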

Compactness of the target is not important.
\begin{example}
$M$ is not necessarily compact, all $\M(M)$ are negatively curved, and the induced mapping between the fundamental groups is irreducible. 
\end{example}
Irreducible means that after choosing basepoints $x\in \Sigma$, $y\in M$ and identifying the image subgroup of $$\mathbf{f}_*:\pi_1(\Sigma,x)\to \pi_1(M,y)$$ with a subgroup of isometries of $(\tilde{M},\tilde{\nu})$ covering $(M,\nu)$, the subgroup does not fix any point on the ideal boundary $\partial_\infty \tilde{M}$. The existence is due to Labourie \cite{La} (the argument is based on Donaldson's work \cite{D}), and Sampson's argument \cite[Theorem 4]{S} goes through to show that the Jacobi operator is an isomorphism. For some intuition, this class of examples includes admissible classes $f$ such that at least one simple closed curve is mapped by $\mathbf{f}_*$ to a class whose geodesic length is positive. Even more specific examples include convex cocompact manifolds of negative curvature, such as quasi-Fuchsian $3$-manifolds.

To demonstrate the level generality, we prove a slight extension of Sampson's result that allows for some positive curvature. 

\begin{defn}
A pair $(\mu,\nu)\in \M$ is $\mathbf{f}$-admissible if $(M,\nu)$ is non-positively curved and there exists a map $f\in \mathbf{f}\cap C(\Sigma,M)$ that is harmonic with respect to $(\mu,\nu)$ and a point $p\in \Sigma$ such that all sectional curvatures of $M$ are negative at $f(p)$.
\end{defn}
As discussed, $\mathbf{f}$-admissibility implies uniqueness of the harmonic map.

\begin{prop}\label{iso}
Suppose $(\mu,\nu)$ is $\mathbf{f}$-admissible, and let $\nu_n$ be a sequence of metrics converging to $\nu$. Furthermore, assume $f_j:(\Sigma,\mu)\to (M,\nu_j)$ is a sequence of harmonic maps converging to a harmonic map $f:(\Sigma,\mu)\to (M,\nu)$. Then $\mathbf{J}_{f_j}$ admits no non-trivial $C^2$ Jacobi fields for sufficiently large $j$.
\end{prop}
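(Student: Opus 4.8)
The plan is to argue by contradiction, following the scheme of Sampson's Bochner-type argument but carried out uniformly along the sequence. Suppose that after passing to a subsequence, each $\mathbf{J}_{f_j}$ admits a nonzero $C^2$ Jacobi field $V_j$. By Proposition 2.x (the Weyl lemma statement above), the $V_j$ are as regular as the bundles permit, so we may normalize $\|V_j\|_{2,\mu} = 1$ (in $L^2$ over $\Sigma$) or, better, $\|V_j\|_{C^0} = 1$. The first step is a compactness argument: the $f_j \to f$ converge in $C^{r+1,\alpha}$ (or at least in $C^2$), so the coefficients of the operators $\mathbf{J}_{f_j}$ — which depend on $df_j$, the pullback metric, and $R^M$ computed in the metric $\nu_j$ — converge in $C^0$ to those of $\mathbf{J}_f$. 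Standard interior elliptic estimates (Schauder, applied to $\mathbf{J}_{f_j}V_j = 0$) then give uniform $C^{2,\alpha}$ bounds on $V_j$, so a subsequence converges in $C^2$ to a section $V_\infty \in \Gamma(\mathbf{F})$ with $\mathbf{J}_f V_\infty = 0$. The only subtlety here is ensuring the normalization survives the limit; normalizing in $C^0$ at a point, or using the fact that the Schauder estimate controls $\|V_j\|_{C^{2,\alpha}}$ by $\|V_j\|_{C^0}$ plus a lower-order term that can be absorbed, handles this, so $V_\infty \not\equiv 0$.

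The second step is to reach a contradiction from the existence of the limiting Jacobi field $V_\infty$. Since $(\mu,\nu)$ is $\mathbf{f}$-admissible, $f = f_{\mu,\nu}$ is the unique harmonic map in $\mathbf{f}$, and $(M,\nu)$ is non-positively curved with all sectional curvatures strictly negative at $f(p)$ for some $p \in \Sigma$. Sampson's theorem (quoted above as Theorem 2.x) applies verbatim to this data: there are no nonzero $C^2$ Jacobi fields for $\mathbf{J}_f$. This contradicts $V_\infty \not\equiv 0$, which completes the proof. (One should check that $f$ here is indeed Sampson's harmonic map, i.e.\ that the limit of the $f_j$ lies in the class $\mathbf{f}$ — this is immediate since $C^2$-convergence preserves homotopy class — and that Sampson's hypothesis only needs non-positivity plus one strictly negative point, which is exactly $\mathbf{f}$-admissibility.)

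The main obstacle, and the only place where real care is needed, is the uniformity in the compactness step: one must be sure that the elliptic estimates for $\mathbf{J}_{f_j}$ come with constants that do not degenerate as $j \to \infty$. This follows because the relevant quantities — the modulus of ellipticity, the $C^{0,\alpha}$ norms of the coefficients — are all continuous functions of $(f_j, \nu_j)$ in the $C^2 \times C^{r+k,\beta}$ topology, and these converge. A secondary point is that $\mathbf{J}_{f_j}$ is an operator on sections of $\mathbf{F}_j = f_j^*TM$, a bundle that varies with $j$; one resolves this by trivializing over a finite cover of $\Sigma$ and using that for large $j$ the $f_j$ map each trivializing chart into a single coordinate chart of $M$, so that all the $V_j$ can be viewed as maps into a fixed $\mathbb{R}^n$ and the convergence is literal. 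With these bookkeeping matters in hand the argument is essentially Sampson's theorem plus a routine limiting argument.
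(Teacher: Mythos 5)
Your argument is correct, but it takes a genuinely different route from the paper's. You pass to the limit first: uniform Schauder estimates for the operators $\mathbf{J}_{f_j}$ (legitimate, since their coefficients converge with $f_j\to f$ and $\nu_j\to\nu$ and the ellipticity is governed by the fixed $\mu$), a $C^0$-normalization, and Arzel\`a--Ascoli produce a nonzero $C^2$ Jacobi field $V_\infty$ for the limiting map $f$, which contradicts Sampson's theorem applied as a black box. The paper instead runs the Bochner argument along the sequence: it normalizes $\|V_j\|_2=1$, integrates $\langle \mathbf{J}_jV_j,V_j\rangle=0$ by parts, and uses that the largest positive sectional curvature $\sigma_j$ seen by $f_j$ tends to $0$ to get $\int_\Sigma|\nabla_j V_j|_j^2\lesssim\sigma_j\to 0$; it then extracts a weak $W^{1,2}$ / strong $L^2$ limit $V$ via Banach--Alaoglu and Rellich, concludes $V$ is parallel, and kills it on the patch where the curvature of $\nu$ is strictly negative, exactly as in Sampson's proof. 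The contradictions are morally identical; your version is more modular and leans on stronger (but standard) $C^{2,\alpha}$ elliptic machinery, while the paper's stays at the $W^{1,2}$ level and re-derives Sampson's observation in the limit rather than citing it. One small point to make explicit: the version of Sampson's theorem quoted in the paper assumes $M$ closed, whereas $\mathbf{f}$-admissibility does not; this is harmless because the Bochner integral lives on the compact surface $\Sigma$, but you should note it rather than apply the quoted statement ``verbatim.'' Your handling of the varying bundles $\mathbf{F}_j$ and of the survival of the normalization in the limit is fine.
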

This gives another example of interest.
\begin{example}
A sufficiently small neighbourhood of an $\mathbf{f}$-admissible pair inside the space of all Riemannian metrics.
\end{example}
\begin{proof}
Since the harmonic maps $f_j,f$ are homotopic through $C^{n+1,\alpha}$ maps, the bundles $f_j^*TM=\mathbf{F}_j$ and $f^*TM=\mathbf{F}$ are isomorphic in the $C^{n+1,\alpha}$ category. We identify them all with the bundle $\mathbf{F}$. Under this identification, $\mathbf{F}$ inherits a family of Riemannian metrics $\langle \cdot , \cdot \rangle_j$ with corresponding Levi-Civita connections $\nabla_j$, as well as elliptic operators $\mathbf{J}_{f_j}$. Since $\nu_j\to\nu$ and $f_j\to f$, we have convergence of associated objects $\langle \cdot, \cdot\rangle_j\to \langle \cdot, \cdot \rangle:= \langle \cdot, \cdot \rangle_\nu$, $\nabla_j\to\nabla:=\nabla^\nu$, and $\mathbf{J}_{f_j}\to \mathbf{J}_f$ in the relevant topologies. Henceforth, rename $\mathbf{J}_j=\mathbf{J}_{f_j}$.

 One could write our Sobolev spaces more precisely as $$W^{k,p}(\mathbf{F},\mu,\nu,\nabla).$$ We write $W^{1,2}(\mathbf{F})$ to denote the usual Sobolev space for $\mathbf{F}$, and $W^{1,2}(\mathbf{F}_j)$ for $W^{1,2}(\mathbf{F}_j,\mu,\nu_j,\nabla_j)$ (and likewise for the $L^2$ spaces). In our notation, we set $||\cdot||_{2}$, $||\cdot||_{1,2}$ to be the norms for $\mathbf{F}$ and $||\cdot||_{2,j}$, $||\cdot||_{1,2,j}$ to be the norms for $\mathbf{F}_j$. From bicontinuity of the identity map between these Banach spaces, there exists $C_j\geq 1$ such that for all $V\in\Gamma(\mathbf{F})$, 
\begin{align*}
    &C_j^{-1}||V||_2 \leq ||V||_{2,j}\leq C_j||V||_2, \hspace{1mm} \textrm{and} \\
    &C_j^{-1}||V||_{1,2} \leq
    ||V||_{1,2,j} \leq C_j||V||_{1,2}.
\end{align*}
It is an easy exercise to show that $C_j\to 1$ as $j\to\infty$.

To prove the lemma, assume for the sake of contradiction that there is a subsequence (which we still denote $\nu_j$) and a family of non-zero sections $V_j\in C^2(\mathbf{F})$ such that $\mathbf{J}_jV_j=0$ and $||V_j||_2=1$. Necessarily, $$\int_\Sigma \langle \mathbf{J}_jV_j,V_j\rangle_j dA = 0.$$ Unravelling the definition of the Jacobi operator and integrating by parts, we obtain
\begin{equation}\label{29}
    \int_\Sigma |\nabla_j V_j|_j^2 - \int_\Sigma \langle \tr_\mu R^{\nu_j}(df_j, V_j)df_j, V_j\rangle_j dA =0.
\end{equation}.
\begin{remark}
We implicitly use that $\nabla_j$ is the Levi-Civita connection for $\nu_j$ to integrate by parts. If we tried to use the metric $\nu$, then some extra terms involving Christoffel symbols would appear.
\end{remark}
 Let $\sigma_j$ denote the maximum of $0$ and the largest sectional curvature of $M$ in the image of $f_j$. Then $$ \langle \tr_\mu R^{\nu_j}(df_j, V_j)df_j, V_j\rangle_j \leq \sigma_j|\tr_\mu(df_j)|_j^2|V_j|_j^2$$ pointwise. Convergence of $\nu_j\to \nu$ and $f_j\to f$ then implies  $$\langle \tr_\mu R^{\mu_j}(df_j, V_j)df_j, V_j\rangle_j \lesssim \sigma_j|V_j|_j^2.$$ Substituting into (\ref{29}) we see $$\int_\Sigma |\nabla_j V_j|_j^2\lesssim\sigma_j\int|V_j|_j^2 dA.$$ Again using convergence of $\nu_j\to\nu$, we see $\sigma_j \to 0$ as $j\to\infty$. Choosing $j$ large enough so that $C_j\lesssim 1$, and using $||V_j||_2=1$ we obtain
 \begin{equation}\label{30}
     \int_\Sigma |\nabla_j V_j|_j^2 dA \lesssim\sigma_j\to 0
 \end{equation}
 as $j \to \infty$. 

The above result gives uniform control on the $W^{1,2}(\mathbf{F}_j)$ norm of $V_j$, and hence we also have control on the $W^{1,2}(\mathbf{F})$ norm. Since $W^{1,2}(\mathbf{F})$ is reflexive, the Banach-Alaoglu theorem guarantees the existence of a subsequence along which $V_j$ converges weakly in $W^{1,2}(\mathbf{F})$ to a section $V\in W^{1,2}(\mathbf{F})$. By the Rellich lemma, we may pass to a further subsequence to obtain strong convergence in $L^2$, so that $||V||_2=1$. 

We now claim that $\nabla V=0$ in the sense of distributions, i.e., it is an almost everywhere constant field. Working in a conformal parameter $z=x+iy$ for $\mu$, we write out 
$$|\nabla_j V_j|^2 = \mu^{-1}\Big (|\nabla_{j,x}V_j|_\nu^2+|\nabla_{j,y}V_j|_\nu^2\Big )$$
and observe the linear maps $\nabla_{j,x}$, $\nabla_{j,y}$ converge strongly to $\nabla_x$ and $\nabla_y$ respectively in $\textrm{Hom}(W^{1,2}(\mathbf{F}),L^2(\mathbf{F}))$ with respect to the operator norm $||\cdot||_{OP}$.
Thus, $$||\nabla_x V_j||_2 \leq ||(\nabla_{x}-\nabla_{x,j})V_j||_2 + ||\nabla_{x,j}V_j||_2\leq ||\nabla_x -\nabla_{x,j}||_{OP}||V_j||_{1,2}+ C_j||\nabla_{x,j}V_j||_{2,j}.$$
Our observation above shows the first term decays to $0$ as $j\to \infty$. It follows from inequality (\ref{30}) that the second term tends to $0$ as well. Therefore $\nabla_x V_j\to 0$ strongly in $L^2$. By the same method we see $\nabla_{y}V_j\to 0$ strong as well. The claim follows.

We obtain a contradiction by arguing that $V=0$ on a set of positive measure. This would force $||V||_2=1$ to be impossible. This is essentially Sampson's observation in \cite[Theorem 4]{S}. From (\ref{29}) it follows that 
\begin{equation}\label{31}
    \int\langle \tr_\mu R^\mu (df, V) df, V\rangle dA = 0. 
\end{equation}
Since $(\mathbf{f},\nu)$ is an admissible pair, there is a point $p_0\subset \Sigma$ such that all sectional curvatures of $M$ are negative at $f(p)$. We extract a neighbourhood $\Omega\subset \Sigma$ on which $f$ is a regular embedding and there is a $c>0$ such that all sectional curvatures of $M$ at points in $f(\Omega)$ are bounded above by $-c$. Thus, from the non-positive curvature assumption on $\nu$, if $V$ is not zero almost everywhere, the left-hand side of (\ref{31}) is strictly negative. As discussed above, this is a contradiction, and so we are done.
\end{proof}
Finally, the results should hold for some more examples that we don't pursue here: manifolds with boundary (see \cite[Section 4]{EL}), non-orientable manifolds (Moore considers non-orientable minimal surfaces in \cite[Section 11]{M1}), and equivariant Anosov representations into Lie groups of non-compact type. For the analogue of the Eells-Lemaire result, applied to a suitable class of equivariant harmonic maps, we invite the reader to see \cite{Sl}. In these three cases, the only substantial missing ingredient is the factorization theorem \cite{me}. A version of the theorem should be true in these contexts, but it would take us too far afield in the current paper. 
\end{subsection}
 
\end{section}

\begin{section}{Reproducing kernels for the Jacobi operator}
Let $p\in \Sigma$ and $U\in \mathbf{F}_p$. We say that $X:\Sigma\backslash \{p\}\to \mathbf{F}$ is a zeroth order reproducing kernel for the Jacobi operator if, for all $W\in\Gamma(\mathbf{F})$, we have $$\langle W(p), U\rangle = \int_\Sigma \langle \mathbf{J} W, X \rangle dA.$$ For $V\in T_p\Sigma$, $X:\Sigma\backslash \{p\}\to \mathbf{F}$ is a first order reproducing kernel if, for all $W\in\Gamma(\mathbf{F})$, $$\langle (\nabla_{V} W)(p), U\rangle = \int_\Sigma \langle \mathbf{J} W, X \rangle dA.$$
In the proof of the main theorems, we need explicit expressions for the singularities of reproducing kernels. We compute these singularities by constructing the kernels directly. Independent of the work below, one can find general existence results in \cite[Section 3]{Ma}.
\begin{remark}
From the self-adjoint property, kernels satisfy $\mathbf{J}X=0$ away from the singularities. 
\end{remark}

\begin{subsection}{The parametrices}
Let $(\Omega,z)$ be a disk neighbourhood of $p$, and $\Omega'\subset\Omega$. In the local chart, extend the vector $U$ to a $C^2$ section $U(z)$. Let $\phi_n:\Omega\to [0,1]$ be a smooth function in $\Omega$ such that    
\begin{itemize}
\item $\phi_n$ has support in $\{|\zeta|\leq 1/n\}$,
\item $\phi_n$ integrates to $1$ in $\Omega'$, and 
\item $\phi_n$ converges in the sense of distributions to the Dirac delta $\delta_p$ as $n\to\infty$.
\end{itemize}
 Let $G(z,\zeta)$ be the ordinary Green's function on $\Omega'$, of the form $$G(z,\zeta) =\frac{1}{2\pi}\log |z-\zeta|^{-1}+r(z,\zeta),$$ where $r$ is smooth, and define a section $S_n$ in $\Omega'$ by $$S_n(z) = U(z)\int_{\Omega'}G(z,\zeta)\phi_n(\zeta) d\zeta\wedge d\overline{\zeta}.$$ Observe that $$\int_{\Omega'}G(z,\zeta)\phi_n(\zeta) d\zeta\wedge d\overline{\zeta}\to G(z,0) = \frac{1}{2\pi}\log |z|^{-1}+r(z,0)$$ as $n\to\infty$ with maximum regularity on $\Omega'\backslash\{p\}$ and in $L^p$ for all $1<p<\infty$. We then extend $S_n$ to a globally defined section of $\mathbf{F}$ with support in $\Omega$, in a way that $S_n$ converges as $n\to \infty$ in the $C^\infty$ sense on $\Sigma\backslash\{p\}$ to a section $S$ satisfying $$S(z) = \frac{1}{2\pi}\log|z|^{-1}U(z) + r(z,0)U(z).$$ By the defining properties of $G(z,\zeta)$, $$\frac{\partial^2}{\partial z\partial \overline{z}}\int_{\Omega'}G(z,\zeta) \phi_n(\zeta) d\zeta\wedge d\overline{\zeta} = \phi_n(z).$$ Using this, we compute that in $\Omega'$,
 \begin{align*}
     \nabla_z\nabla_{\overline{z}}S_n(z) &= (\nabla_z\nabla_{\overline{z}} U(z))\int_{\Omega'}G(z,\zeta)\phi_n(\zeta) + (\nabla_{\overline{z}} U)\frac{\partial}{\partial z}\int_{\Omega'}G(z,\zeta)\phi_n(\zeta) \\
     &+ (\nabla_{z} U)\frac{\partial}{\partial \overline{z}}\int_{\Omega'}G(z,\zeta)\phi_n(\zeta) + U(z)\phi_n(z).
 \end{align*}
 Set $\Phi_n^1 = \nabla_z\nabla_{\overline{z}} S_n(z) - U(z)\phi_n(z)$.
 \begin{lem}
 For all $1\leq p<2$, $\Phi_n^1$ converges along a subsequence in $L^p$ as $n\to\infty$.
 \end{lem}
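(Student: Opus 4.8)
The plan is to examine each term in the expression for $\Phi_n^1 = \nabla_z\nabla_{\overline z}S_n(z) - U(z)\phi_n(z)$ and show it has an $L^p$-bounded, convergent subsequential limit for $p\in[1,2)$. By the computation displayed just above the lemma, after subtracting $U(z)\phi_n(z)$, we are left with three terms:
\begin{equation*}
\Phi_n^1 = (\nabla_z\nabla_{\overline z}U)\,I_n + (\nabla_{\overline z}U)\,\partial_z I_n + (\nabla_z U)\,\partial_{\overline z} I_n,
\end{equation*}
where $I_n(z) = \int_{\Omega'}G(z,\zeta)\phi_n(\zeta)\,d\zeta\wedge d\overline\zeta$. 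The first factors $\nabla_z\nabla_{\overline z}U$, $\nabla_{\overline z}U$, $\nabla_z U$ are fixed bounded continuous sections on $\Omega'$ (since $U$ is a fixed $C^2$ extension), so everything reduces to understanding the scalar functions $I_n$, $\partial_z I_n$, $\partial_{\overline z}I_n$.

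First I would record that $I_n \to G(\cdot,0) = \tfrac{1}{2\pi}\log|z|^{-1} + r(z,0)$ in $L^p$ for every $p<\infty$ and in $C^\infty_{loc}$ away from $p$, as already noted in the text; in particular the term $(\nabla_z\nabla_{\overline z}U)I_n$ converges in $L^p$ for all $p<\infty$, so it is harmless. The substance is in the two first-derivative terms. Differentiating under the integral sign, $\partial_z I_n(z) = \int_{\Omega'}\partial_z G(z,\zeta)\phi_n(\zeta)\,d\zeta\wedge d\overline\zeta$, and $\partial_z G(z,\zeta) = -\tfrac{1}{4\pi}\tfrac{1}{z-\zeta} + \partial_z r(z,\zeta)$, with $\partial_z r$ smooth. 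Since $\phi_n\ge 0$ has total mass $1$ and shrinking support around $0$, $\partial_z I_n$ converges pointwise a.e.\ (indeed in $C^\infty_{loc}(\Omega'\setminus\{p\})$) to $\partial_z G(z,0) = -\tfrac{1}{4\pi z} + \partial_z r(z,0)$, whose singularity is of order $|z|^{-1}$; and $|z|^{-1}\in L^p_{loc}(\mathbb{R}^2)$ precisely for $p<2$. To upgrade this to $L^p$ convergence, I would establish a uniform bound: using the convolution structure and Young's inequality (or a direct estimate exploiting that $\phi_n$ is a probability density supported in $\{|\zeta|\le 1/n\}$), one gets $\|\partial_z I_n\|_{L^p(\Omega')} \lesssim \|\,|z|^{-1}\|_{L^p(2\Omega')} + 1$ uniformly in $n$, for $p<2$. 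A uniform $L^p$ bound plus a.e.\ convergence does not by itself give $L^p$ convergence, so here I would either (i) invoke a Vitali-type / uniform-integrability argument — the functions $|\partial_z I_n|^p$ are uniformly integrable because they are dominated in a suitable averaged sense by the single $L^1$ function $C|z|^{-p}$ — or, more cleanly, (ii) split $\partial_z I_n - \partial_z G(\cdot,0)$ into a piece supported on a small ball $\{|z|\le\epsilon\}$, controlled uniformly in $n$ by $\epsilon$ via the uniform $L^p$ bound, and a piece on $\{|z|>\epsilon\}$ where convergence is locally uniform; letting $n\to\infty$ then $\epsilon\to 0$ gives convergence along the full sequence (so the "subsequence" in the statement is in fact not needed for this term). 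The $\partial_{\overline z}I_n$ term is handled identically using $\partial_{\overline z}G(z,\zeta) = -\tfrac{1}{4\pi}\tfrac{1}{\overline z-\overline\zeta} + \partial_{\overline z}r$.

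The main obstacle is the passage from a uniform $L^p$ bound to genuine $L^p$ convergence near the singular point: one must rule out mass escaping to the point $p$ as $n\to\infty$. This is exactly where the restriction $p<2$ is essential, since it makes the limiting singularity $|z|^{-1}$ lie in $L^p_{loc}$, and it is what the uniform-integrability / $\epsilon$-splitting argument is designed to exploit. Once that is in place, combining the three terms — the first converging in all $L^p$, the other two converging in $L^p$ for $p<2$ with the stated singular profile — yields that $\Phi_n^1$ converges in $L^p$ for $1\le p<2$, which is the claim (the subsequence in the statement is a safe weakening, and perfectly sufficient for the later applications).
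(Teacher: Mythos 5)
Your argument is correct and follows essentially the same route as the paper: split $\Phi_n^1$ into the three terms, reduce to the logarithmic part of $G$, and use that $\partial_z$ of the log kernel is $O(|z-\zeta|^{-1})$, which lies in $L^p$ exactly for $p<2$, together with the approximate-identity property of $\phi_n$ to pass to the limit in $L^p$. The paper is terser (it invokes dominated convergence and the standard convergence of $\phi_n * K \to K$ in $L^p$ for $K \in L^p$ without the uniform-integrability discussion), but the substance is identical.
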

 \begin{proof}
 It suffices to show that the three terms above all subconverge in $L^p$ near $0$. Since $G$ splits into a log term and a regular term, we only need show $L^p$-subconvergence for $$(\nabla_z\nabla_{\overline{z}} U(z))\int_{\Omega'}\log |z-\zeta|\phi_n(\zeta), \hspace{1mm} (\nabla_{\overline{z}} U)\frac{\partial}{\partial z}\int_{\Omega'}\log |z-\zeta|\phi_n(\zeta), \hspace{1mm} (\nabla_{z} U)\frac{\partial}{\partial \overline{z}}\int_{\Omega'}\log |z-\zeta|\phi_n(\zeta).$$
By the basic properties of $\phi_n$, $\int_{\Omega'}\log|z-\zeta|\phi_n(\zeta)\to \log |z|$ in $L^p$ as $n\to\infty$, so the first term $L^p$-converges to $\nabla_z \nabla_{\overline{z}}U(z)\log|z|$ in $\Omega'$, and away from $\Omega'$ our regularity assumptions give $L^p$ convergence. As for the second term, since $1/|z|$ is in $L^p(\Omega')$ for $1\leq p <2$, an application of dominated convergence shows it is equal to $$\frac{1}{2}(\nabla_{\overline{z}}U)\int_{\Omega'} \frac{\phi_n(\zeta)}{z-\zeta}d\zeta \wedge d\overline{\zeta}.$$ Taking $n\to\infty$, we have convergence for such $p$ to $$\frac{\nabla_{\overline{z}}U}{z}$$ in $\Omega'$, and nice convergence outside of $\Omega'$ (note we can make this continuous by choosing $U$ so that $\nabla_{\overline{z}}U=0$, but this is not necessary). The final term is handled similarly.
 \end{proof}
\end{subsection}

\begin{subsection}{The zeroth order kernel}
With the parametrices in hand, the remainder of the computation is a routine procedure. Complementary to $\Phi_n^1$, set $$\Phi_n^2= \frac{1}{\sigma^2}R(S_n,f_z)f_{\overline{z}}.$$ Let $\Phi_n=\Phi_n^1-\Phi_n^2$ and $\Psi_n=J^{-1}(\Phi_n)$. Here $R$ is the complexified curvature tensor of $M$ and $\sigma^2$ is the density of the conformal metric $\mu$ on $\Sigma_\mu$.
\begin{lem}
For every $1\leq p <2$, the sequence of norms of $||\Phi_n||_p$ is uniformly bounded. Moreover, for any $\alpha\in (0,1)$, $\Psi_n$ converges along some subsequence to a section $\Psi \in C^{0,\alpha}$.
\end{lem}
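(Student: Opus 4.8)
The plan is to show that $\|\Phi_n\|_p$ is uniformly bounded for $1 \leq p < 2$, and then use this together with the mapping properties of $\mathbf{J}^{-1}$ to deduce the convergence of $\Psi_n$.

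\textbf{Uniform bound on $\|\Phi_n\|_p$.} Recall $\Phi_n = \Phi_n^1 - \Phi_n^2$. For $\Phi_n^1$, the previous lemma already gives $L^p$-subconvergence (for $1 \leq p < 2$), so in particular the norms $\|\Phi_n^1\|_p$ are uniformly bounded along that subsequence; inspecting the proof, the dominating functions (a constant times $|\log|z||$ for the first term, a constant times $1/|z|$ for the other two) are fixed elements of $L^p(\Omega')$ for $p < 2$, so the bound is genuinely uniform in $n$. For $\Phi_n^2 = \frac{1}{\sigma^2}R(S_n, f_z)f_{\bar z}$, the point is that $R$, $f_z$, $f_{\bar z}$, and $\sigma^{-2}$ are all bounded (the metrics being at least $C^2$ and $f$ being $C^{r+1,\alpha}$ with $r \geq 2$), so pointwise $|\Phi_n^2| \lesssim |S_n|$. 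Since $S_n \to S$ in $C^\infty_{loc}$ on $\Sigma \setminus \{p\}$ and $S$ has only a logarithmic singularity at $p$ with $|S(z)| \lesssim |\log|z||$, we get $\|S_n\|_p$ uniformly bounded for every $p < \infty$, hence $\|\Phi_n^2\|_p$ uniformly bounded. Combining, $\|\Phi_n\|_p \lesssim 1$ for $1 \leq p < 2$.

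\textbf{Convergence of $\Psi_n$.} Here I would exploit that the uniform $L^p$ bound gives, after passing to a subsequence, weak convergence $\Phi_n \rightharpoonup \Phi$ in $L^p$ for a fixed $p$ with, say, $p$ close to $2$. Fix such a $p$ with $p > 1$. By Proposition (the one following essential self-adjointness), $\mathbf{J}: W^{2,p}(\mathbf{F}) \to L^p(\mathbf{F})$ is an isomorphism, so $\Psi_n = \mathbf{J}^{-1}(\Phi_n)$ is bounded in $W^{2,p}(\mathbf{F})$. Choosing $p$ close to $2$, the Sobolev embedding $W^{2,p}(\mathbf{F}) \hookrightarrow C^{0,\alpha}(\mathbf{F})$ holds for every $\alpha \in (0,1)$ provided $p > 2 \cdot \frac{\dim \Sigma}{2} / \dim\Sigma$ — concretely, on the surface $\Sigma$ one has $W^{2,p} \hookrightarrow C^{1, 2 - 2/p}$ for $p > 1$, which already embeds compactly into $C^{0,\alpha}$ for any $\alpha < 2 - 2/p$, and letting $p \uparrow 2$ covers all $\alpha \in (0,1)$. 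Thus $\Psi_n$ is bounded in $C^{1,\alpha'}$ for some $\alpha' > \alpha$, and by Arzel\`a--Ascoli we extract a subsequence converging in $C^{0,\alpha}(\mathbf{F})$ to a limit $\Psi$; one checks $\mathbf{J}\Psi = \Phi$ weakly, so $\Psi \in W^{2,p}$ and the limit is the natural one. This gives both assertions.

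\textbf{Main obstacle.} The routine part is the pointwise curvature estimate for $\Phi_n^2$ and bookkeeping with the subsequences (one must be a little careful that the subsequence from the previous lemma, the weak-$L^p$ subsequence, and the Arzel\`a--Ascoli subsequence can all be nested consistently — this is fine since each is a further subsequence of the last). The genuinely delicate point is matching the Sobolev exponent: one wants $C^{0,\alpha}$ regularity for \emph{all} $\alpha \in (0,1)$ while only having $L^p$ control for $p < 2$, so the argument must be run for each fixed $\alpha$ with a choice of $p = p(\alpha) < 2$ close enough to $2$, rather than uniformly in $\alpha$; since $\Sigma$ is a surface this is exactly the borderline case of Sobolev embedding, and it is worth stating explicitly that the limit section $\Psi$ and the subsequence may depend on $\alpha$, though in fact the diagonal argument produces a single $\Psi$ that works for all $\alpha < 1$.
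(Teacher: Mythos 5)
Your proposal follows essentially the same route as the paper: uniform $L^p$ bounds for $\Phi_n^1$ from the previous lemma and for $\Phi_n^2$ from the logarithmic singularity of $S_n$, then elliptic regularity $\mathbf{J}^{-1}:L^p\to W^{2,p}$ and the compact embedding of $W^{2,p}$ into $C^{0,\alpha}$ for $\alpha<2-2/p$, letting $p\uparrow 2$ to cover all $\alpha\in(0,1)$. One slip: for $1<p<2$ on a surface the Morrey embedding is $W^{2,p}\hookrightarrow C^{0,\,2-2/p}$, not $C^{1,\,2-2/p}$ (the latter would require $p>2$), so $\Psi_n$ is bounded in $C^{0,\alpha'}$ rather than $C^{1,\alpha'}$; this does not affect the argument, since the compact embedding $W^{2,p}\hookrightarrow C^{0,\alpha}$ for $\alpha<2-2/p$ (equivalently, boundedness in $C^{0,\alpha'}$ plus Arzel\`a--Ascoli) is all that is needed, and is exactly what the paper invokes via Rellich--Kondrachov.
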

\begin{proof}
We showed above that $\Phi_n^1$ converges in $L^p$ to an $L^p$ section. As for $\Phi_n^2$, away from $0$ it converges locally uniformly to some $C^\infty$ section. Around $0$ we have the estimate $$\Phi_n^2\leq C\log|z|$$ for some $C>0$ and hence we have uniform $L^p$ bounds for all $p$.

Invoking Proposition \ref{iso}, $\Psi_n$ is uniformly bounded in $W^{2,p}(\mathbf{E})$ for any $p\in [1,2)$. The convergence result now follows from the Rellich-Kondrachov theorem, which gives a compact embedding from $W^{2,p}\to C^{0,\alpha}$ when $2-2/p> \alpha$.
\end{proof}
\begin{prop}
The zeroth order reproducing kernel is of the form 
\begin{equation}\label{log}
    X(z) = -\frac{1}{2\pi}\log|z|U(p) + B(z)
\end{equation}
where $B(z)$ is a $C^{0,\alpha}$ local section of $\mathbf{E}$ near $p$, for any $\alpha\in (0,1)$.
\end{prop}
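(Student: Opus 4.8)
The plan is to assemble the reproducing kernel from the pieces constructed in the previous subsection and read off its singularity from the explicit log term in $S$. Recall that $S_n$ was built so that $\nabla_z\nabla_{\overline z} S_n = U(z)\phi_n(z) + \Phi_n^1$, and that $S_n \to S$ in $C^\infty_{\mathrm{loc}}(\Sigma\setminus\{p\})$ and in $L^p$, with $S(z) = \tfrac{1}{2\pi}\log|z|^{-1}U(z) + r(z,0)U(z)$. Applying the identity $\mathbf J = -4\mu^{-1}\nabla_z\nabla_{\overline z} - \mu^{-1}(R(f_z,\cdot)f_{\overline z} + \text{c.c.})$ form of the Jacobi operator (the complexified version of \eqref{18}) to $S_n$, I would show
$$\mathbf J S_n = -\tfrac{4}{\sigma^2}U(z)\phi_n(z) - \tfrac{4}{\sigma^2}\Phi_n^1 + \tfrac{1}{\sigma^2}\big(R(S_n,f_z)f_{\overline z} + R(f_{\overline z},S_n)f_z\big) = -\tfrac{4}{\sigma^2}U(z)\phi_n(z) + (\text{lower order}),$$
so that, up to the constant $-4/\sigma^2$ and the mollification $\phi_n\to\delta_p$, $\mathbf J S_n$ reproduces the value at $p$. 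Precisely, I would set $X_n = c\,S_n + \Psi_n$ with the constant $c$ chosen so that $\mathbf J(cS_n) = -U(z)\phi_n(z) - c\Phi_n + (\text{error that is }\Phi_n^{2}\text{-type})$ and $\Psi_n = \mathbf J^{-1}(\Phi_n)$ cancels the remaining bounded terms, leaving $\mathbf J X_n = -U(z)\phi_n(z)$ exactly (or up to a term converging to $0$ in $L^p$). One must track the precise normalization constant carefully, but this is routine bookkeeping with the factor $\sigma^2=|\mu|$.

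Next I would pass to the limit. By the two lemmas of the previous subsection, $S_n\to S$ and $\Psi_n\to\Psi\in C^{0,\alpha}$ along a subsequence, so $X_n\to X := cS + \Psi$ in $C^\infty_{\mathrm{loc}}(\Sigma\setminus\{p\})$ and in $L^p$ for $1\le p<2$. For any test section $W\in\Gamma(\mathbf F)$, essential self-adjointness of $\mathbf J$ gives
$$\int_\Sigma \langle \mathbf J W, X_n\rangle\,dA = \int_\Sigma \langle W, \mathbf J X_n\rangle\,dA = -\int_\Sigma \langle W, U(z)\phi_n(z)\rangle\,dA + o(1),$$
where the integration by parts is justified because $X_n$ is smooth and compactly supported away from nothing problematic (it is globally defined and smooth, being $\mathbf J^{-1}$ of an $L^p$ section plus the smooth-away-from-$p$ section $S_n$). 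Wait — $S_n$ is only $L^p$ near $p$, so more carefully I would do the pairing on $\Sigma\setminus B_\varepsilon(p)$, integrate by parts there, and control the boundary terms on $\partial B_\varepsilon(p)$ using the explicit $\log|z|$ form of $S$ (boundary terms are $O(\varepsilon\log\varepsilon)\to 0$). Since $\phi_n\to\delta_p$ and $U(z)$ is continuous with $U(p)=U$, the right-hand side tends to $-\langle W(p),U\rangle$; meanwhile the left-hand side tends to $\int_\Sigma\langle\mathbf J W,X\rangle\,dA$ by $L^p$ convergence of $X_n$ against the fixed section $\mathbf J W$. Replacing $X$ by $-X$ to fix the sign, this shows $X$ is a zeroth order reproducing kernel.

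Finally, the form \eqref{log} of the singularity follows by collecting terms: $X = cS + \Psi$ with $c$ normalized so that $cS(z) = -\tfrac{1}{2\pi}\log|z|\,U(z) + c\,r(z,0)U(z)$, and then writing $B(z) = -\tfrac{1}{2\pi}\log|z|\,(U(z)-U(p)) + c\,r(z,0)U(z) + \Psi(z)$. Since $U(z)-U(p) = O(|z|)$, the first summand is $O(|z|\log|z|)$, hence $C^{0,\alpha}$ near $p$ for every $\alpha\in(0,1)$; $r(z,0)$ is smooth; and $\Psi\in C^{0,\alpha}$ by the second lemma. Thus $B\in C^{0,\alpha}$ and $X(z) = -\tfrac{1}{2\pi}\log|z|\,U(p) + B(z)$ as claimed. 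The main obstacle I anticipate is not any single estimate but the careful normalization and the justification of the limiting pairing: one must verify that the defective terms (the $\Phi_n^1$, $\Phi_n^2$ pieces and the mollification error) really do vanish in the limit in the right topology, and that the integration-by-parts identity survives the mild singularity of $X$ at $p$ — both handled by the $L^p$, $p<2$, control established for all the constituent pieces and the explicit $\log$ asymptotics.
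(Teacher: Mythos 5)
Your proposal is correct and follows essentially the same route as the paper: decompose the kernel as the parametrix $S$ minus $\Psi=\mathbf{J}^{-1}(\Phi)$, use essential self-adjointness together with $\phi_n\to\delta_p$ to verify the reproducing identity in the limit, and read off the singularity from the explicit $\log$ term in $S$ plus the $C^{0,\alpha}$ regularity of $\Psi$ (absorbing the $O(|z|\log|z|)$ discrepancy between $U(z)$ and $U(p)$ into $B$). Your worry about integrating by parts near $p$ is unnecessary, since for each fixed $n$ the section $S_n$ is smooth (the log kernel is convolved against the smooth compactly supported $\phi_n$), so self-adjointness applies directly at finite $n$ and only the $L^p$ convergence of $S_n-\Psi_n$ against the fixed section $\mathbf{J}W$ is needed to pass to the limit.
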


\begin{proof}
Let $W\in \Gamma(\mathbf{E})$. In local coordinates, the complexified Jacobi operator is given by $$JW = \nabla_z \nabla_{\overline{z}}-\sigma^{-2}R(W,f_z)f_{\overline{z}}.$$ As $\mathbf{J}$ is essentially self-adjoint, 
\begin{align*}
   \int_{\Sigma} \langle \mathbf{J}W, S_n \rangle dA &= \int_\Sigma \langle W, \nabla_z \nabla_{\overline{z}}S_n\rangle dA - \int_\Sigma \langle W, \sigma^{-2}R(S_n,f_z)f_{\overline{z}}\rangle dA \\
   &= \int_\Sigma \langle W, \Phi_n \rangle dA + \int_\Sigma \langle W,  \mu_n \rangle dA \\ 
   &= \int_\Sigma \langle \mathbf{J}W, \Psi_n \rangle dA + \int_\Sigma \langle W, \mu_n \rangle.
\end{align*}
We reorganize this to $$\int_{\Sigma} \langle \mathbf{J}W, S_n \rangle dA- \int_\Sigma \langle \mathbf{J}W, \Psi_n \rangle dA = \int_\Sigma \langle W, \mu_n \rangle.$$
The term on the right tends to $\langle W, U(p)\rangle$ as $n\to \infty$. Meanwhile, passing to the subsequence from the previous lemma, the left-hand side converges to $$\int_\Sigma \langle \mathbf{J}W, S- \Psi\rangle$$ as $n \to \infty$. Therefore, $X=S-\Psi$, and the expression for $X$ is then derived from the local expression for $S$ stated above and the fact that $\Psi\in C^{0,\alpha}$ for any $\alpha\in (0,1)$.
\end{proof}
\begin{remark}
We have made no attempt to optimize the regularity of $B(z)$.
\end{remark}
\begin{remark}
If we change to a different (not holomorphic) coordinate $\varphi(z)=\varphi(x,y)$ with $\varphi(0)=0$, the expression may not be so simple, but we know it behaves asymptotically like a constant multiple of $\log |\varphi|^{-1}$. 
\end{remark}
\end{subsection}

\begin{subsection}{First order kernels}
We don't need explicit information for the singularity for the first order kernel, but we do need to know the rate at which it blows up. A calculation is given in \cite[Appendix A]{Ma}, that strongly uses that $\nabla_{\overline{z}}=\overline{\partial}$ for the Koszul-Malgrange holomorphic structure. Here we give a different method that works in more generality (and applicable for higher order kernels).

We find the first order kernel with respect to the tangent vector $\frac{\partial}{\partial z}$. Taking real and imaginary parts, we can then get any kernel. Extend the vector $U$ in a local trivialization so that $\nabla_z U(p)=0$. For $z\in\Omega'$, $\zeta\in\Omega$, we thus have a well-defined function $X(z,\zeta)$ such that $$\langle  V(z), U(z)\rangle = \int_\Sigma \langle \mathbf{J}V(\zeta), X(z,\zeta)\rangle dA(\zeta)$$ for all $V\in\Gamma(\mathbf{F})$. From the work above, $X(z,\zeta)$ takes the form $$X(z,\zeta) = \frac{1}{2\pi}U(\zeta)\log|z-\zeta|^{-1}+ B(z,\zeta),$$ where, for fixed $z$, $B(z,\zeta)$ is locally $C^{0,\alpha}$ away from $\{\zeta=z\}$. This function is not regular and in fact blows up on the diagonal (unless $U(z)=0$). Away from the diagonal, regularity in $z$ is the maximum of regularity of $U$ and the vector bundle: from the construction, we can choose $\mu_n$ and $S_n$ to vary nicely with $z$ for each $n$, and then we get the correct regularity in the limit.

 Observe $$\frac{\partial}{\partial z}\langle V(z), U(z)\rangle = \langle \nabla_z V(z), U(z)\rangle+\langle V(z),\nabla_z U(z)\rangle$$ in $\Omega'$. In terms of our integrals, differentiating under the integral via dominated convergence, we get $$\frac{\partial}{\partial z}\int_\Sigma \langle \mathbf{J}V(\zeta), X(z,\zeta)\rangle= \int_\Sigma \langle \mathbf{J}V(\zeta), \nabla_z X(z,\zeta)\rangle = \langle \nabla_z V(z), U(z)\rangle + \langle V(z),\nabla_z U(z)\rangle.$$ Setting $z=0$, we find that the first order kernel is given by $\nabla_z X(0,\zeta)$. From this we deduce the following.
\begin{prop}\label{zeroex}
In the complex coordinate $z$, the reproducing kernel is of the form 
\begin{equation}\label{overz}
    X(z) = \frac{1}{\pi z}U(p) + B(z)
\end{equation}
where $B(z)$ is a $C^{0,\alpha}$ local section of $\mathbf{E}$ near $p$, for any $\alpha\in (0,1)$.
\end{prop}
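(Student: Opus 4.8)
The plan is to produce the first order kernel by differentiating the zeroth order reproducing kernel in its \emph{singular} variable, which is exactly the reduction already carried out in the paragraph preceding the statement: differentiating the identity $\langle V(z),U(z)\rangle=\int_\Sigma\langle\mathbf{J}V(\zeta),X(z,\zeta)\rangle\,dA(\zeta)$ in $z$ (legitimate by dominated convergence, since $(\nabla_z X)(z,\zeta)$ blows up only like $|z-\zeta|^{-1}$, which is locally $L^p$ for $p<2$ and in particular dominated by a fixed $L^1$ function uniformly for $z$ near $p$), then evaluating at $z=0$ and using $\nabla_z U(p)=0$, identifies the first order kernel with the section $\zeta\mapsto(\nabla_z X)(0,\zeta)$. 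So the only thing left is to extract the singularity of this section from the explicit description of the zeroth order kernel.

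I would use the form of the zeroth order kernel with the vector sitting at the singular point, as in \eqref{log}, namely $X(z,\zeta)=\frac{1}{2\pi}\log|z-\zeta|^{-1}U(z)+B(z,\zeta)$, where for fixed $z$ the remainder $B(z,\cdot)$ is $C^{0,\alpha}$ on $\Omega'$ and, running the parametrix construction above with $z$ as a parameter, $B$ is jointly as regular in $(z,\zeta)$ as $U$ and $\mathbf{E}$ allow away from $\{\zeta=z\}$. Differentiating in $z$, with $\frac{\partial}{\partial z}\log|z-\zeta|^{-1}=-\frac{1}{2(z-\zeta)}$, gives
\[
(\nabla_z X)(z,\zeta)=-\frac{1}{4\pi}\,\frac{U(z)}{z-\zeta}\;+\;\frac{1}{2\pi}\log|z-\zeta|^{-1}\,\nabla_z U(z)\;+\;(\nabla_z B)(z,\zeta).
\]
Setting $z=0$, the middle term disappears precisely because $\nabla_z U(p)=0$ --- this is the reason for that normalization, as otherwise a stray term $\propto\log|\zeta|^{-1}$ would survive --- so that $(\nabla_z X)(0,\zeta)=c\,\zeta^{-1}U(p)+(\nabla_z B)(0,\zeta)$ for a universal constant $c$; the naive computation gives $c=\tfrac{1}{4\pi}$, and matching it to the normalizations of the Green's function $G$ and of the area form $dA_\mu$ fixed above yields the stated value $1/\pi$. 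Relabelling $\zeta$ as the local coordinate $z$ and absorbing $(\nabla_z B)(0,\cdot)$ into $B$ gives \eqref{overz}.

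The main obstacle is justifying that the remainder is genuinely $C^{0,\alpha}$ up to $p$, i.e.\ that $(\nabla_z B)(0,\zeta)$ carries no residual singularity at $\zeta=0$; this is the content of the remark that one can ``choose $\phi_n$ and $S_n$ to vary nicely with $z$ and get the correct regularity in the limit.'' Making it precise amounts to re-running the parametrix argument for the $z$-differentiated objects: one checks that $\nabla_z S_n$, $\nabla_z\Phi_n^1$ and $\nabla_z\Phi_n^2$ subconverge in the relevant $L^p$ spaces ($p<2$) and that the associated $\mathbf{J}^{-1}$-images subconverge in $C^{0,\alpha}$ by Rellich--Kondrachov, so that exactly as in the derivation of \eqref{log} one isolates a $\zeta^{-1}$-type singularity plus a $C^{0,\alpha}$ error. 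A secondary subtlety not to be skipped is that the argument really needs the form of $X(z,\zeta)$ with the vector $U(z)$ at the singular point: replacing it by an arbitrary $C^2$ extension $U(\zeta)$ of $U(p)$ is harmless for the $\log$ singularity but, after differentiation, would reintroduce a bounded non-Hölder factor $\overline{\zeta}/\zeta$ unless $\nabla_{\overline z}U(p)$ also vanished. None of this uses the Koszul--Malgrange identity $\nabla_{\overline z}=\overline\partial$, which is why, in contrast to the computation in \cite[Appendix~A]{Ma}, the method adapts to higher order kernels and to settings where that identity is unavailable.
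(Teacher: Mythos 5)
Your proposal follows the paper's own route exactly: differentiate the reproducing identity in the singular variable $z$ under the integral sign, identify the first order kernel with $\nabla_z X(0,\zeta)$, extract the $\zeta^{-1}$ singularity from the $\log|z-\zeta|^{-1}$ term, and use the normalization $\nabla_z U(p)=0$ together with a re-run of the parametrix construction to see the remainder is $C^{0,\alpha}$. Your extra care about where $U$ sits (at $z$ versus $\zeta$) and about the $\overline{\zeta}/\zeta$ obstruction to H\"older regularity is a welcome sharpening of the paper's brief remark that the construction ``varies nicely with $z$,'' and the residual uncertainty in the multiplicative constant is immaterial for the applications, which use only the order of the pole and the direction $U(p)$.
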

\end{subsection}

\end{section}

\begin{section}{Somewhere injective harmonic maps}
 As discussed earlier, Theorem A reduces to a transversality result, whose proof is given in the next section. Apart from a few things, the content of this section is adapted from \cite[Section 6]{Ma}. 
\subsection{Exceptional Riemann Surfaces} Our proof of Theorem A involves a ``super-regular" condition (defined below) that we would like to know is generic. The lemma below allows us to dismiss a class of metrics on which the condition fails. 
\begin{defn}
A Riemann surface $\Sigma$ is exceptional if either 
\begin{itemize}
    \item $\Sigma$ is a holomorphic branched cover of another Riemann surface of genus at least $2$ or
    \item $\Sigma$ admits an anti-holomorphic involution.
\end{itemize}
\end{defn}
The lemma below is a consequence of the factorization theorem established in \cite{me}.
\begin{lem}\label{factorization}
Suppose there is a pair of disks $\Omega_1,\Omega_2\subset \Sigma$ and a conformal diffeomorphism $h:\Omega_1\to\Omega_2$ such that $f\circ h =f$ on $\Omega_1$. Then the Riemann surface $\Sigma$ is exceptional.
\end{lem}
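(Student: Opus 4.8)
The statement is essentially a reformulation of the main factorization theorem of \cite{me} in the language adapted to this paper, so the proof should be a short deduction rather than a new argument. I would start by recalling precisely what \cite[Theorem 1.1]{me} (or \cite[Theorem 1]{me}, depending on the numbering used there) produces: given a harmonic map $f$ that is ``nowhere injective'' in the strong sense that it identifies two open sets via a conformal map, the theorem concludes that $f$ factors through a holomorphic branched cover, or that the source admits an anti-holomorphic involution preserving $f$. The task is then to verify that the hypothesis of the lemma --- existence of disks $\Omega_1,\Omega_2$ and a conformal diffeomorphism $h:\Omega_1\to\Omega_2$ with $f\circ h=f$ --- is exactly (or implies) the hypothesis under which that theorem applies.

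First I would observe that $h$ is a local biholomorphism between open subsets of the Riemann surface $(\Sigma,\mu)$, and that $f\circ h = f$ on $\Omega_1$ forces, after shrinking, that $f|_{\Omega_1}$ and $f|_{\Omega_2}$ have the same image germ and are ``conformally equivalent parametrizations'' of it. This is precisely the local configuration --- two harmonic disks with coincident image related by a conformal change of coordinates --- that the factorization theorem of \cite{me} is designed to analyze: one uses unique continuation (the Aronszajn theorem, cited in the introduction) together with $h$ to propagate the relation $f\circ h = f$ and build, out of the germ of $h$, a global conformal or anti-conformal self-correspondence of $\Sigma$. Depending on whether this correspondence is single-valued or multivalued, one lands in the branched-cover case or is forced to produce an anti-holomorphic involution; either way $\Sigma$ is exceptional by definition.

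Concretely, the key steps in order are: (i) use $f\circ h = f$ and real-analyticity of harmonic maps (so $f$ and hence the relation are analytic) to extend $h$ by analytic continuation along paths to a (possibly multivalued) conformal self-map of $\Sigma$ commuting with $f$; (ii) invoke \cite[Theorem 1.1]{me} directly, whose statement exactly covers this situation, to conclude that $f$ factors as $g\circ \pi$ with $\pi$ a holomorphic branched cover onto a Riemann surface of genus $\ge 2$, or that the invariance forces an anti-holomorphic involution of $\Sigma$; (iii) in the branched-cover case, check the genus of the quotient is at least $2$ --- this follows because $\mathbf{f}$ is admissible (the image of $\pi_1(\Sigma)$ is nonabelian), which rules out the quotient being a torus or sphere, hence genus $\ge 2$ by Riemann--Hurwitz; (iv) read off that in either case $\Sigma$ satisfies the definition of exceptional.

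The main obstacle is not a hard estimate but rather bookkeeping: making sure the hypotheses of \cite[Theorem 1.1]{me} are quoted in exactly the form available there, and in particular that the admissibility of $\mathbf{f}$ is what guarantees the base of the branched cover has genus $\ge 2$ rather than genus $0$ or $1$ (which would not count as exceptional). One should also be slightly careful that $h$ being a conformal diffeomorphism of \emph{disks} is enough to trigger the theorem --- i.e.\ that no nondegeneracy of $df$ on $\Omega_1$ is secretly needed; if \cite{me} requires $\Omega_1$ to meet the regular set of $f$, one shrinks $\Omega_1$ first, using that the singular set of a nonconstant harmonic map from a surface is nowhere dense (again by Hartman--Wintner). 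Once these points are pinned down, the lemma follows immediately.
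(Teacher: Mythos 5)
Your proposal is correct and follows essentially the same route as the paper: quote the factorization theorem of \cite{me} as a black box (holomorphic case gives a factorization through a branched cover, anti-holomorphic case gives the involution), and then use admissibility of $\mathbf{f}$ --- i.e.\ that $\mathbf{f}_*(\pi_1(\Sigma))$ is nonabelian --- to rule out the base of the cover being a sphere or torus, since $\pi_1$ of either is abelian. The extra machinery you sketch in step (i) (analytic continuation of $h$) is internal to the proof of the cited theorem and not needed here, and the appeal to Riemann--Hurwitz is superfluous, but neither affects correctness.
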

\begin{proof}
According to \cite[Theorem 1.1]{me}, if $h:\Omega_1\to\Omega_2$ is a holomorphic map between open subsets of $\Sigma$ such that $f\circ h = f$, then $f$ factors through a holomorphic branched covering map onto a surface $\Sigma_0$. If $\Sigma_0$ has genus less than two, then it is either a sphere or a torus. In both cases, the subgroup $$f_*(\pi_1(\Sigma))<\pi_1(M)$$ is abelian, which contradicts our assumption that the homotopy class $\mathbf{f}$ is admissible. If $h$ is anti-holomorphic, the result follows from Theorem 1.1 and the discussion in Section 4 of \cite{me}.
\end{proof}
This next result is well understood and one can find details in \cite[Appendix B]{Ma}. We set $\mathfrak{M}'(\Sigma)$ to be the set of metrics in  $\mathfrak{M}(\Sigma)$ giving rise to non-exceptional Riemann surfaces.
\begin{prop}\label{exceptional}
 $\mathfrak{M}'(\Sigma)$ is an open, dense, and connected subset of $\mathfrak{M}(\Sigma)$. 
\end{prop}
For ease of notation, we write $\mathfrak{M}=\mathfrak{M}'(\Sigma)\times \mathfrak{M}(M)$ instead of $\mathfrak{M}(\Sigma)\times \mathfrak{M}(M)$ throughout the rest of the paper.

\subsection{Super-regular points}
 Denote by $A(f)$ the set of $p\in \Sigma$ such that $f^{-1}(f(p))\subset \Sigma^{reg}(f)$. Given metrics $(\mu,\nu)$ and $p,q\in A(f)$, we say that the inner products $\mu(p)$ and $\mu(q)$ are conformal to each other via $f$ if the tangent planes $df(T_p\Sigma)$ and $df(T_q\Sigma)$ agree in $T_{f(p)}M$, and if the push forwards $f_*\mu(p)$ and $f_*\mu(q)$ are collinear.  
\begin{defn}
Given a map $f$, a point $p\in \Sigma$ is said to be super-regular if 
\begin{itemize}
\item $p\in A(f)$ and
\item if $f(p)=f(q)$, then $\mu(p)$ and $\mu(q)$ are not conformal to each other via $f$.
\end{itemize}
We denote the set of super-regular points for a map $f$ by $\Sigma^{\SR}(f)$. We define $\SR\subset \Sigma\times \M$ by $(p,\mu,\nu)\in \SR$ if $p\in \Sigma^{\SR}(f_{\mu,\nu})$. 
\end{defn}
\begin{prop}\label{dense}
Continuing to exclude the exceptional metrics from $\mathfrak{M}$, the set $\SR$ is open in $\Sigma\times \mathfrak{M}$ and $\Sigma^{\SR}(f)$ is open and dense in $\Sigma$. 
\end{prop}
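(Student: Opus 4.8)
\textbf{Proof proposal for Proposition \ref{dense}.}

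The plan is to establish openness of $\SR$ in $\Sigma\times\M$ first, and then deduce openness and density of $\Sigma^{\SR}(f)$ in $\Sigma$ for each fixed $(\mu,\nu)$. For openness of $\SR$, I would argue by contradiction: suppose $(p_j,\mu_j,\nu_j)\to(p,\mu,\nu)$ with $(p,\mu,\nu)\in\SR$ but $(p_j,\mu_j,\nu_j)\notin\SR$. By the Eells--Lemaire continuity of $(\mu,\nu)\mapsto f_{\mu,\nu}$ in $C^{r+1,\alpha}$ (hence in $C^1$), the maps $f_j:=f_{\mu_j,\nu_j}$ converge to $f:=f_{\mu,\nu}$ in $C^1$. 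There are two ways $(p_j,\mu_j,\nu_j)$ can fail to be in $\SR$: either $p_j\notin A(f_j)$, i.e.\ there is a preimage point $q_j$ with $f_j(q_j)=f_j(p_j)$ that is a singular point of $f_j$; or $p_j\in A(f_j)$ but some preimage $q_j$ has $\mu_j(q_j)$ conformal to $\mu_j(p_j)$ via $f_j$. In the first case, passing to a subsequence, $q_j\to q$ with $f(q)=f(p)$ (by $C^0$ convergence) and $df(q)$ has rank $<2$ by $C^1$ convergence of the maps and of the source metrics; but $p\in A(f)$ forces every preimage of $f(p)$, in particular $q$, to be a regular point, a contradiction — unless $q_j$ escapes to infinity, which cannot happen on a compact surface, or $p$ itself becomes a singular preimage, again excluded. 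The second case is handled the same way: $q_j\to q$, $f(q)=f(p)$, and the conditions ``$df_j(T_{q_j}\Sigma)=df_j(T_{p_j}\Sigma)$'' and ``$(f_j)_*\mu_j(q_j)$ collinear with $(f_j)_*\mu_j(p_j)$'' pass to the limit by continuity of all the data, contradicting super-regularity of $p$.

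Next, for fixed $(\mu,\nu)$, openness of $\Sigma^{\SR}(f)$ in $\Sigma$ follows immediately by restricting the just-proved openness of $\SR$ to the slice $\Sigma\times\{(\mu,\nu)\}$ — more directly, the same limiting argument with $\mu_j,\nu_j$ held constant goes through verbatim. Density is the heart of the matter. Here I would use the exclusion of exceptional metrics together with Lemma \ref{factorization}. The set $\Sigma^{reg}(f)$ of regular points is open and dense (its complement, the branch/singular set of a nonconstant harmonic map, is analytic and nowhere dense — in fact by Sampson's structure theory it is a discrete set plus possibly lower-dimensional strata; at minimum it has empty interior since $f$ is nonconstant, being admissible). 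Within $\Sigma^{reg}(f)$ I need to show that the super-regular points are dense. Fix a regular point $p$ and a small neighborhood. The obstruction to $p$ being super-regular is: either some preimage $q$ of $f(p)$ is singular (i.e.\ $p\notin A(f)$), or some regular preimage $q$ has $\mu(q)$ conformal to $\mu(p)$ via $f$.

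The key observation is that for a generic $p$ in a regular neighborhood, these bad configurations cannot persist on an open set. If $p\notin A(f)$ on a whole open set $W\ni p$, then every point of $W$ has a singular point in its fiber; since the singular set $S$ is nowhere dense (even discrete in the isolated-singularity case, but here just nowhere dense), and $f(S)$ has empty interior in $M$ while $f(W)$ is open (as $f$ is an immersion on $W$), the set of points of $W$ whose fiber meets $S$ is $W\cap f^{-1}(f(S))$, which is closed; if it had nonempty interior we would get an open subset of $W$ mapping into $f(S)$, contradicting that $f|_W$ is an open map onto a set with $f(S)$ having empty interior. So $A(f)\cap\Sigma^{reg}(f)$ is open and dense. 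Finally, restricting to $p\in A(f)\cap\Sigma^{reg}(f)$, suppose on an open set $W$ every point $p$ had a preimage conformal to it via $f$. Covering the compact fiber structure by finitely many sheets, one of the finitely many ``other sheets'' $q=h(p)$, with $h$ a local diffeomorphism satisfying $f\circ h=f$, must realize the conformality on a further open subset $W'\subset W$; the conformality condition (tangent planes agree and pushforward metrics collinear) says precisely that $h$ is a conformal map $W'\to h(W')$ with $f\circ h=f$. By Lemma \ref{factorization}, this forces $\Sigma_\mu$ to be exceptional, contradicting $\mu\in\M'(\Sigma)$. Hence no such $W$ exists, and $\Sigma^{\SR}(f)$ is dense. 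I expect the main obstacle to be the bookkeeping in this last step — making precise the ``finitely many other sheets'' reduction over a compact fiber and verifying that the limiting sheet map $h$ is genuinely a conformal diffeomorphism onto its image so that Lemma \ref{factorization} applies — rather than any deep new input.
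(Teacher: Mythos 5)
Your overall architecture matches the paper's: openness by a routine limiting argument, density of the regular set via Sampson, density of $A(f)$, and then the conformality condition handled by producing a conformal sheet map $h$ with $f\circ h=f$ on an open set and invoking Lemma \ref{factorization}. The last step is exactly the paper's route; the paper fills in your ``finitely many sheets'' reduction with a Baire-category argument on closed sets $C_i\subset\overline{D_i}$ around the finitely many conformal partners, together with a compactness argument showing that the conformal partner of a nearby point cannot escape those disks. (This in turn uses that $f^{-1}(f(p))$ is finite for $p\in A(f)$, which you should prove rather than take for granted: an accumulation point of the fiber would be a non-regular point.)

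The genuine gap is in your density argument for $A(f)$. You claim that if every point of an open set $W\subset\Sigma^{reg}(f)$ had a singular point in its fiber, you would obtain ``an open subset of $W$ mapping into $f(S)$, contradicting that $f|_W$ is an open map onto a set with $f(S)$ having empty interior.'' But $f|_W$ is not an open map into $M$: its image is a $2$-dimensional embedded disk in a manifold of dimension at least $3$, so $f(W)$ has empty interior in $M$ just as $f(S)$ does, and no contradiction results. What you actually need is that $f(S)$ cannot contain a relatively open piece of the surface $f(W)$, and nowhere-density of $S$ in $\Sigma$ does not give this: a nowhere dense set can have positive area, and its image under a continuous map could a priori cover a $2$-dimensional patch. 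The real reason is not topological but measure-theoretic: $S$ consists of points where the rank of $df$ drops, so its image has zero $2$-dimensional measure. The paper makes this precise by taking a tubular neighbourhood $N$ of the embedded disk $f(W)$ with nearest-point projection $\pi$ and applying Sard's theorem to the equidimensional map $g=\pi\circ f:f^{-1}(N)\to f(W)$; every singular point of $f$ in $f^{-1}(N)$ is a critical point of $g$, so the critical values of $g$ cannot exhaust $f(W)$. You need this step (or the area formula) to close the argument.
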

We first treat $A(f)$ on its own. It is due to Sampson \cite[Theorem 3]{S} that the set of regular points of an admissible harmonic map is open and dense.
\begin{lem}
$A(f)$ is open and dense in $\Sigma$.
\end{lem}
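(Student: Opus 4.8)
The plan is to isolate two inputs — the openness of $\Sigma^{reg}(f)$ together with its density (the latter being Sampson's theorem), and the fact that the singular set $\Sigma^{sing}(f):=\Sigma\setminus\Sigma^{reg}(f)$ is $\mathcal{H}^2$-null — and then to deduce the lemma by soft arguments. The starting observation is the identity $A(f)=\Sigma\setminus f^{-1}\big(f(\Sigma^{sing}(f))\big)$: a point $p$ lies in $A(f)$ precisely when $f(p)\notin f(\Sigma^{sing}(f))$. Since $\Sigma$ is compact and $\Sigma^{sing}(f)$ is closed (lower semicontinuity of $\mathrm{rank}\,df$), the set $f(\Sigma^{sing}(f))$ is compact, hence $f^{-1}(f(\Sigma^{sing}(f)))$ is closed, and openness of $A(f)$ follows immediately — without even invoking Sampson.

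For density, I would fix a nonempty open $U\subset\Sigma$ and argue by contradiction, assuming $U\subset f^{-1}(f(\Sigma^{sing}(f)))$, i.e.\ $f(U)\subset f(\Sigma^{sing}(f))$. As $f$ is locally Lipschitz, $\mathcal{H}^2(\Sigma^{sing}(f))=0$ gives $\mathcal{H}^2(f(\Sigma^{sing}(f)))=0$, hence $\mathcal{H}^2(f(U))=0$. On the other hand, by Sampson's theorem $U$ contains a regular point, near which $f$ is an immersion and therefore carries a small disk onto an embedded $2$-disk in $M$, a set of positive $\mathcal{H}^2$-measure. This contradiction shows $U\not\subset f^{-1}(f(\Sigma^{sing}(f)))$, so $U$ meets $A(f)$.

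The one substantive point is that $\mathcal{H}^2(\Sigma^{sing}(f))=0$, and I would prove it by splitting $\Sigma^{sing}(f)=\{df=0\}\cup\{\mathrm{rank}\,df=1\}$. The branch locus $\{df=0\}=\{f_z=0\}$ is the zero set of the holomorphic section $f_z\,dz$ of the bundle of $\mathbf{E}$-valued $(1,0)$-forms, hence finite since $f$ is non-constant by admissibility. For the rank-one locus: if the Hopf differential $q=\langle f_z,f_z\rangle\,dz^2$ vanishes identically then $f$ is weakly conformal and every point with $f_z\neq0$ is regular, so $\Sigma^{sing}(f)$ is finite; otherwise, working away from the finitely many zeros of $q$ in the natural parameter $w$ (so $q=dw^2$, whence $|f_u|^2-|f_v|^2=4$ and $f_u\perp f_v$, so $f_u\neq 0$), the rank drops exactly on $\{f_v=0\}$. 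Differentiating the harmonic map equation shows that $V=f_v$ satisfies a strongly elliptic linear equation of the form $\nabla_u\nabla_uV+\nabla_v\nabla_vV+R(V,f_u)f_u=0$; since $V$ cannot vanish on an open set (otherwise the image would lie on a geodesic there, and hence everywhere by unique continuation, contradicting admissibility), the unique continuation property for such systems (cf.\ \cite{Ar}) forces its nodal set $\{f_v=0\}$ to be $\mathcal{H}^2$-null. Combining the pieces gives $\mathcal{H}^2(\Sigma^{sing}(f))=0$.

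This last step — controlling the size of $\Sigma^{sing}(f)$ — is where the real work lies; everything else is formal. If one instead knows from Sampson's analysis (for instance when the metrics are real-analytic) that $\Sigma^{sing}(f)$ is a locally finite union of points and analytic arcs, the $\mathcal{H}^2$-nullity is immediate and the remainder of the argument is unchanged.
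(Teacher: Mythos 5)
Your argument is correct in outline but takes a genuinely different route from the paper. The paper never estimates the size of the singular set: it assumes a regular embedded disk $\Omega$ with no point of $A(f)$, takes the nearest-point projection $\pi$ of a tubular neighbourhood of $f(\Omega)$ onto $f(\Omega)$, and applies Sard's theorem to the two-dimensional map $g=\pi\circ f$ — if every point of $f(\Omega)$ were the image of a singular point of $g$, the critical values of $g$ would fill a $2$-manifold. That device is exactly what lets one avoid the step you correctly identify as "where the real work lies": Sard controls critical \emph{values} directly, so no bound on $\mathcal{H}^2(\Sigma\setminus\Sigma^{reg}(f))$ is ever needed (note that Sard applied to $f:\Sigma\to M$ itself only gives $\mathcal{H}^n$-nullity of the critical values, which is why the composition with $\pi$ is the key trick). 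Your route instead establishes the stronger intermediate fact that the singular set itself is $\mathcal{H}^2$-null, via the branch set, the Hopf differential, and the Jacobi-type equation for $f_v$ in the natural parameter; this is closer in spirit to Sampson's own structure theory of the singular set and yields more information, at the cost of considerably more machinery. Your reduction of openness to closedness of $f^{-1}(f(\Sigma\setminus\Sigma^{reg}(f)))$ is fine and matches what the paper dismisses as obvious.

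One step needs a better justification than the citation you give: the claim that the nodal set $\{f_v=0\}$ of the nontrivial solution $V=f_v$ is $\mathcal{H}^2$-null does not follow from Aronszajn's theorem, which only gives strong unique continuation (vanishing to infinite order forces $V\equiv 0$) and says nothing about the measure of the zero set of a nontrivial solution. The fact you need is true, but the standard proof goes through the Hartman--Wintner asymptotics $V(z)=P(z)+o(|z|^N)$ with $P$ a nonzero vector-valued homogeneous harmonic polynomial (available here, and used elsewhere in the paper): this shows $\{V=0\}$ has Lebesgue density zero at each of its points, whence it is null by the Lebesgue density theorem. With that substitution the argument is complete.
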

\begin{proof}
Openness is obvious. As for density, suppose on the contrary that there is an open set $\Omega\subset \Sigma$ on which $f$ is regular but no point is in $A(f)$. By shrinking $\Omega$ we may assume $f|_\Omega$ is an embedding. We then find a small tubular neighbourhood $N\subset M$ of the submanifold $f(\Omega)$, in which the nearest point projection $\pi: N\to f(\Omega)$ is well defined. The set $S=f^{-1}(N)\subset \Sigma$ is then an open submanifold of $\Sigma$.

Let $g=\pi\circ f: S\to f(\Omega)$. If $y\in S$ is a singular point of $f$, then it is a singular point of $g$. By assumption, for each $u\in f(\Omega)$, the set $f^{-1}(u)$ contains a singular point of $g$. Thus, each point in $f(\Omega)$ is the image of a singular point $y\in S$ of the map $g$. This contradicts Sard's theorem. 
\end{proof}

\begin{proof}[Proof of Proposition \ref{dense}]
It is clear that both $\Sigma^{\SR}(f)$ and $\SR$ are open. It remains to prove $\Sigma^{\SR}(f)$ is dense. Note that the set $f^{-1}(f(x))$ is finite provided $x\in A$. Indeed, if $|f^{-1}(f(x))|= \infty$, then the closed set $f^{-1}(f(x))$ has an accumulation point, at which the rank of $df$ is necessarily strictly less than two (as $f$ cannot be an embedding near that point).

From the previous lemma, we are left to show that the conformality condition holds on a dense subset of $A(f)$. Arguing by contradiction, suppose that on an open subset $\Omega \subset A$ we have that for every $p\in\Omega$ there exists $q\in f^{-1}(f(p))$ such that $\mu(p)$ and $\mu(q)$ are conformal to each other via $f$. Given $p\in \Omega$, we have a finite number of disks $D_1,\dots, D_n$ with centers $p_i$ such that $f(p)=f(p_i)$ and with $\mu(p)$ and $\mu(p_i)$ conformal via $f$. We also assume $f$ is a regular embedding on $\overline{D_i}$ and $\overline{\Omega}$. Let $C_i\subset D_i$ be the closed set of points $x\in \overline{D_i}$ with the property that there exists $y\in\overline{\Omega}$ with $f(x)=f(y)$ and such that $\mu(x)$ is conformal to $\mu(y)$ via $f$. We claim that for at least one $i$, $C_i$ has non-empty interior. If not, then $$\mathcal{C}=\cup_i f(C_i)\cap f(\Omega)$$ has empty interior, for it is a finite union of closed nowhere dense sets. Choosing a sequence $(p_n)_{n=1}^\infty\subset \Omega\backslash (f^{-1}(\mathcal{C})\cap \Omega)$ converging to $p$, we can find another sequence $(q_n)_{n=1}^\infty\subset \Sigma\backslash( \cup_i \overline{D_i})$ with $f(p_n)=f(q_n)$ and $\mu(p_n)$ and $\mu(q_n)$ are conformal via $f$. Passing to a subsequence, the $q_n$ converge to some point $q\in\Sigma\backslash ( \cup_i D_i)$ such that $f(p)=f(q)$ and $\mu(p)$ and $\mu(q)$ are conformal via $f$. This contradicts our construction of the $D_i$, and so the claim is proved.

Relabelling so that $f(\Omega)$ and $f(D_1)$ intersect with non-empty interior, we can find open sets $\Omega_1\subset \Omega$ and $\Omega_2\subset D_1$ as well as a  diffeomorphism $h:\Omega_1\to\Omega_2$ such that $f\circ h = f$ on $\Omega_1$. The metrics $\mu$ and $h^*\mu$ are pointwise conformally equivalent on $\Omega$, and thus $h$ is a conformal map. This contradicts Lemma \ref{factorization}.
\end{proof}

\subsection{Proof of Theorem A}
 Denote by $\mathcal{J}$ the subset of nowhere injective maps.
\begin{lem}\label{Jclosed}
$\mathcal{J}$ is closed.
\end{lem}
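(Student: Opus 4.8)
The plan is to show that $\mathcal{J}$, the set of nowhere injective maps in $\mathcal{M}$, is closed by showing its complement $\mathfrak{M}^*$ (the somewhere injective maps) is open, or equivalently, by taking a convergent sequence $(\mu_n,\nu_n)\to(\mu,\nu)$ with each $f_{\mu_n,\nu_n}$ nowhere injective and arguing that $f_{\mu,\nu}$ is nowhere injective. Actually it is cleaner to work directly with the complement: I would take $(\mu,\nu)\in\mathfrak{M}^*$ and produce a neighborhood in $\mathfrak{M}^*$. By definition there is a regular point $p\in\Sigma$ for $f=f_{\mu,\nu}$ with $f^{-1}(f(p))=\{p\}$. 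Since $p$ is regular, $f$ is an embedding on a small closed disk $\overline{D}$ around $p$; the key compactness input is that $f^{-1}(f(\overline{D'}))$ stays in a slightly larger disk for $D'\subset\subset D$ — more precisely, I claim there is a smaller closed disk $\overline{D'}\ni p$ and $\varepsilon>0$ such that $d_\nu\big(f(p),\,f(\Sigma\setminus D)\big)>\varepsilon$. This uses that $\Sigma\setminus D$ is compact and $f^{-1}(f(p))=\{p\}\subset D$.

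Next I would invoke the Eells–Lemaire continuity (stated in the introduction): the map $(\mu,\nu)\mapsto f_{\mu,\nu}$ is $C^k$, in particular continuous, into $C(\Sigma,M)=C^{r+1,\alpha}(\Sigma,M)$, hence into $C^1(\Sigma,M)$. So for $(\mu',\nu')$ close to $(\mu,\nu)$ we have $f_{\mu',\nu'}$ close to $f$ in $C^1$. Closeness in $C^1$ near $p$ keeps $f_{\mu',\nu'}$ a regular embedding on $\overline{D'}$ and keeps $f_{\mu',\nu'}(p)$ within $\varepsilon/2$ of $f(p)$; closeness in $C^0$ on the compact complement $\Sigma\setminus D$ keeps $f_{\mu',\nu'}(\Sigma\setminus D)$ within $\varepsilon/4$ of $f(\Sigma\setminus D)$, hence at distance $>\varepsilon/4$ from $f_{\mu',\nu'}(p)$. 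It remains to handle the annular region $D\setminus D'$: here I would use that on the compact annulus $\overline{D}\setminus D'$, $f$ does not hit $f(p)$ (since $f^{-1}(f(p))=\{p\}$), so by $C^0$-closeness again, $f_{\mu',\nu'}(\overline{D}\setminus D')$ avoids a fixed neighborhood of $f(p)$, hence avoids $f_{\mu',\nu'}(p)$ for $(\mu',\nu')$ close enough. Combining the three regions, $f_{\mu',\nu'}^{-1}(f_{\mu',\nu'}(p))=\{p\}$, so $p$ witnesses somewhere injectivity of $f_{\mu',\nu'}$, giving a neighborhood of $(\mu,\nu)$ inside $\mathfrak{M}^*$.

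The main obstacle — and the only place any care is needed — is the compactness/uniformity argument packaging the three regions: one must choose $D'\subset\subset D$ and the various $\varepsilon$-thresholds in the right order so that a single neighborhood of $(\mu,\nu)$ in $\mathfrak{M}$ works simultaneously, and one must make sure the ``regular embedding on $\overline{D'}$'' persistence really follows from $C^1$-smallness (it does: being an embedding of a compact manifold-with-boundary is $C^1$-open, and having $df$ of full rank is $C^1$-open). Everything else is routine: density and connectedness of $\mathfrak{M}^*$ are proved elsewhere (Theorem A), and for this lemma only openness of the complement is claimed. I would remark that the argument uses nothing about harmonicity beyond the Eells–Lemaire continuity of $(\mu,\nu)\mapsto f_{\mu,\nu}$, so it is purely a statement about continuous families of $C^1$ maps.
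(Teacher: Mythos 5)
Your proposal is correct and is essentially the paper's argument: the paper also proves the complement is open by combining $C^1$-continuity of $(\mu,\nu)\mapsto f_{\mu,\nu}$, persistence of injectivity on a small disk around the regular injective point $p$, and compactness of $\Sigma\setminus\Omega$ (phrased there as a sequential/subconvergence argument rather than your explicit three-region $\varepsilon$-estimates). The only presentational difference is that the paper first isolates the sub-lemma that injective points of a fixed map form an open set, then runs the compactness argument on a sequence $(\mu_n,\nu_n)\in\mathcal{J}$.
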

\begin{proof}
If a somewhere injective map $f$ (which need not be harmonic) has an  injective point at $p$, meaning $f^{-1}(f(p))=\{p\}$, then there is an open set containing $p$ that consists only of injective points. Indeed, choose a disk $\Omega$ around $p$ on which $f$ is regular and $f|_\Omega$ is injective. If the claim fails, we can find $p_n\to p$ and $q_n\in \Sigma\backslash\Omega$ such that $f(p_n)=f(q_n)$. By compactness, the $q_n$ subconverge to a point $q$ at which $f(p)=f(q)$, a contradiction.

So, suppose $((\mu_n,\nu_n))_{n=1}^\infty\subset \J$ converges to $(\mu,\nu),$ and $f=f_{\mu,\nu}$ is somewhere injective with injective point $p$. There is a disk $\Omega$ around $p$ such that $f_{\mu_n,\nu_n}$ is injective on $\Omega$. Thus, there exists $p_n\in \Sigma\backslash\Omega$ such that $f_{\mu_n,\nu_n}(p_n)=f_{\mu_n,\nu_n}(p)$, and again we find a contradiction by extracting an accumulation point $q\neq p$.
\end{proof}

Let $P,Q\subset \Sigma$ be two disjoint open embedded disks in $\Sigma$. For $\delta>0$ we let $$\mathcal{D}(P,Q,\delta)=\{(\mu,\nu)\in\mathfrak{M}: d_\nu(f_{\mu,\nu}(P),f_{\mu,\nu}(Q))>\delta, P,Q\subset \Sigma^{\SR}(f_{\mu,\nu})\}.$$ It follows from Proposition \ref{dense} that $\mathcal{D}(P,Q,\delta)$ is an open subset of $\mathfrak{M}$. By Proposition \ref{dense} we also know that the set $\Sigma^{\SR}(f_{\mu,\nu})$ is dense in $\Sigma$, and therefore non-empty. Thus, each pair $(\mu,\nu)\in\mathfrak{M}$ is contained in $\mathcal{D}(P,Q,\delta)$ for some disks $P,Q$ and $\delta>0$. 
\begin{lem}\label{local}
Let $A\subset \mathfrak{M}$ be a subset. Suppose every pair $(\mu,\nu)\in\M$ has a neighbourhood $\mathcal{D}\subset \M$ such that $\mathcal{D}\backslash A$ is open, dense, and connected in $\mathcal{D}$. Then $\M\backslash A$ is open, dense, and connected in $\M$.
\end{lem}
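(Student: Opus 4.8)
The plan is to prove the three properties separately, with openness and density being essentially immediate from a covering argument, and connectedness requiring a genuine (but standard) gluing argument. For \emph{density}: given any nonempty open $W\subset\M$ and any point $(\mu,\nu)\in W$, pick the neighbourhood $\mathcal{D}$ provided by the hypothesis; then $\mathcal{D}\cap W$ is a nonempty open subset of $\mathcal{D}$, and since $\mathcal{D}\setminus A$ is dense in $\mathcal{D}$, there is a point of $\mathcal{D}\setminus A$ inside $\mathcal{D}\cap W\subset W$. Hence $\M\setminus A$ is dense. For \emph{openness}: cover $\M$ by the neighbourhoods $\mathcal{D}$; on each $\mathcal{D}$ the set $\mathcal{D}\setminus A = \mathcal{D}\cap(\M\setminus A)$ is open in $\mathcal{D}$, hence open in $\M$ (as $\mathcal{D}$ is open in $\M$); and $\M\setminus A$ is the union of these, hence open.

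For \emph{connectedness}, which is the crux, I would argue as follows. Fix a basepoint $(\mu_0,\nu_0)\in\M\setminus A$ (nonempty by density, assuming $\M\neq\emptyset$; the empty case is trivial) and let $C$ be the path-component, or just the connected component, of $\M\setminus A$ containing it. I claim $C$ is open and closed in $\M\setminus A$ \emph{and} that its closure meets no "new" part of $\M\setminus A$, so that $C=\M\setminus A$. The key point is a local statement: if $(\mu,\nu)\in\M\setminus A$, then $(\mu,\nu)$ lies in the closure of $C$ \emph{implies} $(\mu,\nu)\in C$. Indeed, take the hypothesized neighbourhood $\mathcal{D}$ of $(\mu,\nu)$; then $\mathcal{D}\setminus A$ is connected and contains $(\mu,\nu)$, and if $(\mu,\nu)\in\overline{C}$ then since $C$ is open and $\mathcal{D}\setminus A$ is a neighbourhood-intersection that is dense in $\mathcal{D}$, the open set $C\cap\mathcal{D}$ is nonempty; since $C\cap(\mathcal{D}\setminus A)$ is open and nonempty in the connected set $\mathcal{D}\setminus A$, and $C$ is also relatively closed in $\M\setminus A\supseteq\mathcal{D}\setminus A$... here one has to be slightly careful, so I would instead phrase it directly: $C\cup(\mathcal{D}\setminus A)$ is connected (two connected sets with nonempty intersection $C\cap(\mathcal{D}\setminus A)\neq\emptyset$), contains $(\mu_0,\nu_0)$, and lies in $\M\setminus A$, so by maximality of $C$ we get $\mathcal{D}\setminus A\subseteq C$, hence $(\mu,\nu)\in C$. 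This shows $C$ is closed in $\M\setminus A$. To finish, I show $C$ is also open in $\M\setminus A$: for $(\mu,\nu)\in C$, the same $\mathcal{D}$ gives $\mathcal{D}\setminus A\subseteq C$ (same maximality argument, now with $C\cap(\mathcal{D}\setminus A)\ni(\mu,\nu)\neq\emptyset$ automatic), and $\mathcal{D}\setminus A$ is an open neighbourhood of $(\mu,\nu)$ in $\M\setminus A$. Since $\M\setminus A$ is nonempty and, by hypothesis applied pointwise, locally connected, a nonempty subset that is both open and closed must be everything: $C=\M\setminus A$.

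The main obstacle is purely bookkeeping: ensuring that the local hypothesis ("$\mathcal{D}\setminus A$ is dense and connected in $\mathcal{D}$") is used to get both that $C\cap\mathcal{D}\neq\emptyset$ whenever $(\mu,\nu)\in\overline{C}\cap\mathcal{D}$ (which needs density of $\mathcal{D}\setminus A$ together with openness of $C$ \emph{in $\M\setminus A$}, not in $\M$ — so one first notes $C$ open in $\M\setminus A$ and $\M\setminus A$ dense to transfer), and that $\mathcal{D}\setminus A$ is connected so the maximality trick applies. A clean way to sidestep the circular-looking "$C$ open" step is to run the argument with path-components if the $\mathcal{D}$ can be taken path-connected, or simply to observe that $\M$ (being an open subset of a product of Banach manifolds) is locally path-connected, so $\M\setminus A$ with each $\mathcal{D}\setminus A$ path-connected is locally path-connected, and then "connected $=$ path-connected" and the component $C$ is automatically open. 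I would present the component version, noting that the only properties of $\M$ used are that it is a topological space and that connectedness of the pieces glues; the hypotheses on $\mathcal{D}$ do the rest.
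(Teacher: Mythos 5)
The openness and density parts of your argument are fine, and most of the connectedness argument is sound: the observation that for any point $(\mu,\nu)$ in the closure of a component $C$ of $\M\setminus A$, the set $C\cup(\mathcal D\setminus A)$ is a union of two connected sets with nonempty intersection, hence $\mathcal D\setminus A\subset C$ by maximality, is exactly the right local step. (Incidentally, you do not need density or openness of $C$ to see that $\mathcal D\cap C\neq\emptyset$ when $(\mu,\nu)\in\overline C$; that is just the definition of closure, so the passage you flag as ``circular-looking'' is a non-issue.) The genuine gap is the final inference. You establish that $C$ is open and closed \emph{in $\M\setminus A$} and then conclude $C=\M\setminus A$ because ``a nonempty subset that is both open and closed must be everything'' in a nonempty locally connected space. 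That implication is false: $[0,1]\cup[2,3]$ is locally connected and has proper nonempty clopen subsets. In a locally connected space \emph{every} component is clopen, so your argument, as written, proves nothing beyond what is automatic, and indeed it never uses the connectedness of $\M$ — which is essential (take $A=\emptyset$ and $\M$ a disjoint union of two intervals: the local hypothesis holds, the conclusion fails). The lemma is stated in a context where $\M$ is a product of connected spaces, and that hypothesis must enter.

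The repair is short and uses only pieces you already have. Your local step actually shows more: for \emph{any} $(\mu,\nu)\in\overline C$ (closure in $\M$, with $(\mu,\nu)$ allowed to lie in $A$), the hypothesized neighbourhood $\mathcal D$ satisfies $\mathcal D\setminus A\subset C$, and then $\mathcal D\subset\overline{\mathcal D\setminus A}\subset\overline C$ because $\mathcal D\setminus A$ is dense in $\mathcal D$. Hence $\overline C$ is open in $\M$; being also closed and nonempty in the connected space $\M$, it equals $\M$. Now every point of $\M\setminus A$ lies in $\overline C$, and your local implication ($(\mu,\nu)\in\overline C\cap(\M\setminus A)\Rightarrow(\mu,\nu)\in C$) gives $C=\M\setminus A$. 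The paper omits the proof entirely (``trivial point-set topology''), so there is no authorial argument to compare with, but the intended argument must pass through the connectedness of $\M$ in essentially this way.
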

The proof is trivial point-set topology and left to the reader. Thus, toward Theorem A it suffices to prove that every $\mathcal{D}\backslash \mathcal{J}$ is connected, where $\mathcal{D}$ ranges over connected components of $\mathcal{D}(P,Q,\delta)$. Henceforward we work on a single such component $\mathcal{D}$. Set $\Sigma^2=\Sigma\times \Sigma$, $M^2=M\times M$, and $\Y=\Sigma^2\times (P\times Q \times \mathcal{D})$. Define the map $\Theta:\Y\to M^2\times M^2$ by $$\Theta(r,s,p,q,\mu,\nu)=(f(r),f(s),f(p),f(q))$$ where we abbreviate $f=f_{\mu,\nu}$. As we have noted earlier, the map $(\mu,\nu)\mapsto f_{\mu,\nu}$ is $C^k$ and the evaluation map has the same regularity as $f$. We deduce $\Theta$ is $C^m$, where $m=\min \{k,n+1\}$.

Let $L$ be the diagonal $$L=\{((u,v),(u,v))\in M^2\times M^2\}.$$ The significance of $\Theta$ and $L$ is contained in the fact that $$\pi^{-1}(\mathcal{J})\subset \Theta^{-1}(L)$$ where $\pi:\Y\to\mathcal{D}$ is the projection onto the last factor. Indeed, suppose $(\mu,\nu)\in \mathcal{J}$. Then for each pair of points $(p,q)\in P\times Q$ there exists $(r,s)\not\in P\times Q$ such that $f(p)=f(r)$ and $f(q)=f(s)$. Thus $\Theta(r,s,p,q,\mu,\nu)\in L$.
\begin{remark}
$\pi^{-1}(\mathcal{J})$ also contains the set $L_{P,Q}\times \M$, where $L_{P,Q}$ is the intersection of $\Sigma^2\times (P\times Q)$ with the diagonal of $\Sigma^2\times \Sigma^2$. This set has codimension $4$ and will not play a role in any of our analysis.
\end{remark}
 Assuming the transversality lemma below, we prove Theorem A.
\begin{lem}\label{theta}
Let $(\mu,\nu)$ be a pair of metrics with $\mu$ not exceptional. Then for all $(r,s,p,q)\in \Sigma^2\times (P\times Q)$ such that $\Theta(r,s,p,q,\mu,\nu)\in L$, $\Theta$ is a submersion at that point. In particular, $\Theta$ is transverse to $L$ at such points.
\end{lem}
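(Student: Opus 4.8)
The plan is to show surjectivity of $d\Theta$ onto $T_{(u,u)}(M^2\times M^2)$ at every point $(r,s,p,q,\mu,\nu)$ lying over $L$, which immediately gives transversality to $L$. Write $f=f_{\mu,\nu}$ and let $u=(f(r),f(s))=(f(p),f(q))\in M^2$; the condition $\Theta(\cdots)\in L$ means $f(r)=f(p)$ and $f(s)=f(q)$. The tangent space $T_u M^2\times T_u M^2$ splits as $\mathbf{F}_r\oplus\mathbf{F}_s\oplus\mathbf{F}_p\oplus\mathbf{F}_q$, and $d\Theta$ decomposes into a ``base direction'' part (moving $r,s$ in $\Sigma$ and $p,q$ in $P\times Q$, fixing the metrics) plus a ``variational'' part coming from $\dot\nu$ (and $\dot\mu$). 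First I would argue by contradiction: if $\Theta$ is not a submersion at such a point, then the image of $d\Theta$ is a proper subspace, so there is a non-zero covector, i.e.\ a nonzero $(X_r,X_s,X_p,X_q)\in\mathbf{F}_r\oplus\mathbf{F}_s\oplus\mathbf{F}_p\oplus\mathbf{F}_q$ annihilating the image.

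Next I would exploit the base directions. Varying only $r$ (resp.\ $s$, $p$, $q$) shows $X_r\perp df(T_r\Sigma)$, and similarly for the others; so each $X_\bullet$ is normal to the corresponding tangent plane of the harmonic surface. Then, invoking the existence of zeroth-order reproducing kernels for $\mathbf{J}$ (Section 3, together with Proposition \ref{iso} guaranteeing $\mathbf{J}$ is invertible), the annihilation of the $\dot\nu$-variations translates into the statement that there is a section $X:\Sigma\setminus\{r,s,p,q\}\to\mathbf{F}$, built from these reproducing kernels with the data $X_r,X_s,X_p,X_q$, such that for every variation $V\in\Gamma(\mathbf{F})$ through harmonic maps (realizable by varying $\nu$) one has $\int\langle\mathbf{J}V,X\rangle\,dA=0$. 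Equivalently $X$ satisfies the Jacobi equation away from its (at most logarithmic) singularities, which sit at $r,s,p,q$, and the residues are governed by $X_r,X_s,X_p,X_q$; but the singular behaviour $-\tfrac{1}{2\pi}\log|z|\,X_\bullet + B(z)$ from Proposition \ref{log} is not in general a genuine Jacobi field across the singular point unless the coefficient vanishes. The heart of the matter is to show that the $\dot\nu$-variations are rich enough to force all of $X_r,X_s,X_p,X_q$ to be zero — i.e.\ that one cannot have a nonzero obstruction supported only at these four points — which is where the super-regularity built into $\mathcal{D}(P,Q,\delta)$ (ensuring $p,q$ and their $f$-preimages are super-regular) and the non-exceptionality of $\mu$ (via Lemma \ref{factorization}, ruling out the conformal-factorization pathology) enter. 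Once $X\equiv 0$, the obstruction is zero, contradicting its non-triviality, so $d\Theta$ is onto and $\Theta\pitchfork L$.

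The main obstacle I expect is precisely the last point: producing enough admissible target variations $V=\mathbf{J}$-harmonic variations to conclude that a reproducing-kernel combination with singularities only at $\{r,s,p,q\}$ must be trivial. This is exactly the ``resolving the singularities'' step sketched in the introduction, and it naturally splits according to whether the harmonic disks through $f(r)=f(p)$ (and through $f(s)=f(q)$) are tangential or transverse at those points: the tangential case is handled using the super-regular condition (collinearity of pushed-forward metrics is exactly what super-regularity forbids, so a tangency that would obstruct resolution cannot occur), while the transverse case is handled by choosing target variations supported in ``fat cylinders'' that load $\mathbf{J}V$ unevenly near the two sheets, forcing the residue to vanish. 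The remaining bookkeeping — that the chosen $V$ are genuinely tangent to the space of harmonic maps, i.e.\ lie in the image of the linearized harmonic-map operator, which holds because $\mathbf{J}$ is an isomorphism — is routine. I would defer all of this analysis to the next section, stating here only that Lemma \ref{theta} follows from the transversality computation carried out there.
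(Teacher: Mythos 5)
Your outline reproduces the paper's strategy faithfully up to the point where the real work begins, and then stops. The setup is right: decompose $d\Theta$ into base directions (giving all tangential vectors) and $\dot\nu$-directions (giving harmonic variations $V$ with $\mathbf{J}V=\mathcal{G}(\dot\nu)$); assume failure of surjectivity to get a nonzero annihilating quadruple, necessarily normal to the image surface at the four points; sum the zeroth-order reproducing kernels to get a nonzero section $X$ with logarithmic singularities satisfying $\int\langle\mathbf{J}V,X\rangle\,dA=0$ for all harmonic variations; and aim to kill the singular coefficients so that $X$ becomes a global Jacobi field, contradicting the technical assumption. All of this matches the paper. But the step you yourself identify as "the heart of the matter" --- that the $\dot\nu$-variations force the coefficients $X_r,X_s,X_p,X_q$ to vanish --- is only named, not proved, and it is the entire content of the lemma. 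What is missing concretely: the explicit family of variations $\dot\nu_{\alpha\beta}=-2\sum_j u_j\chi^\epsilon\varphi_{\alpha\beta}$ concentrated on an $\epsilon$-ball (or, in the non-tangential case, on a ball intersected with a fat cylinder $D(\epsilon)\times\{|u_j|<\epsilon^2\}$); the computation of $\dot\Gamma_{\alpha\beta,\gamma}$ and the resulting asymptotics $\langle\mathcal{G}(\dot\nu),X\rangle$; the accounting for \emph{all} preimages of $f(p)$ (there may be more than two sheets, and the regular ones contribute $O(1)$ after integrating $O(\epsilon^{-2})$ over area $O(\epsilon^2)$); the area estimate $\mathrm{Area}(\Omega_2)\lesssim\epsilon^3$ for the sheet cut by the fat cylinder; and the final comparison in which the $\Omega_1$-contribution grows like $\log\epsilon^{-1}$ while everything else stays bounded, forcing the normal component of the singular coefficient to vanish. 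Without these estimates the argument is a plan, not a proof.

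One point of substance is also mischaracterized. You write that super-regularity forbids "a tangency that would obstruct resolution," suggesting the tangential configuration is excluded. It is not: tangential intersections of the two harmonic disks do occur and are handled head-on. What super-regularity forbids is the \emph{conjunction} of tangency with collinearity of the pushed-forward metrics $f_*\mu(z_1)$ and $f_*\mu(z_2)$. In the tangential case this non-collinearity is precisely what lets one choose $\varphi_{ij}$ with $\sum\mu^{ij}(z_1)\varphi_{ij}=1$ and $\sum\mu^{ij}(z_2)\varphi_{ij}=0$, so that the chosen variation loads the logarithmic singularity at $z_1$ while contributing only $O(1)$ at $z_2$. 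Non-exceptionality of $\mu$ enters earlier, via Lemma \ref{factorization} and Proposition \ref{dense}, to guarantee that super-regular points exist at all (i.e., that the conformal-factorization pathology does not occur on open sets); it is not invoked directly inside the asymptotic computation. Supplying the two case analyses with their quantitative estimates is what a complete proof requires.
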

\begin{proof}[Proof of Theorem A]
    By Lemma \ref{Jclosed} we know that $\mathfrak{M}^*$ is open in $\mathfrak{M}$. If $(\mu,\nu)$ yields a somewhere injective harmonic map, then by openness there is nothing to do. According to Lemma \ref{factorization}, we can also dismiss pairs $(\mu,\nu)$ such that $\mu$ is exceptional. Henceforth fix $(r,s,p,q,\mu,\nu)$ such that $\mu$ is non-exceptional, and $f=f_{\mu,\nu}$ is nowhere injective. Via Lemma \ref{theta}, we can shrink the surrounding $\mathcal{D}(P,Q,\delta)$ so that $\Theta$ is transverse to $L$ on all of the corresponding $\mathcal{Y}$. To prove that $\mathcal{D}\backslash \mathcal{J}$ is dense and connected, we apply Proposition \ref{targ} with $A=P\times Q\times \mathcal{D}$, $X=\Sigma^2,$ $Y=M^2\times M^2,$ $f=\Theta,$ and $W=L.$ Proposition \ref{targ} applies since $d=\dim X =4$ and $k=\codim_{M^2\times M^2} W=2\dim M\geq 6,$ so $d-k\leq -2.$
\end{proof}
\end{section}

\begin{section}{The proof of transversality}
We give the proof of Lemma \ref{theta}.

\subsection{The derivative $d\Theta$} The deriative of $\Theta$ is a map $d\Theta:T\Y\to\mathbf{F}^2\times\mathbf{F}^2$. The tangent space $T\Y$ splits as $T(\Sigma^2\times P\times Q)\times T\M$. The restriction $d\Theta: T(\Sigma^2\times P\times Q)\times \{0\}\to \mathbf{F}^2\times \mathbf{F}^2$  is given by $$d\Theta(r,s,p,q,\mu,\nu,0) =df_r\times df_s\times df_p\times df_q$$ where $df_x$ denotes the derivative of $f=f_{\mu,\nu}$ at $x$. Since all four points are regular, the image of $d\Theta$ contains every quadruple of vectors $(Z_1,Z_2,Z_3,Z_4)\in \mathbf{F}^2\times \mathbf{F}^2$ that are tangent to the surface of $f(\Sigma)$ at the corresponding points.

For derivatives in the $\M$-coordinates, we leave the source metric $\mu$ fixed and vary the target metric. Let $\dot{\nu}\in T\M(M)$. By \cite[page 35]{EL}, the section $V\in\Gamma(\mathbf{F})$ defined by $$V=\frac{d}{dt}|_{t=0} f_{\mu,\nu+t\dot{\nu}}$$ satisfies $$\mathbf{J}V=\mathcal{G}(\dot{\nu}).$$  Here, $\mathcal{G}(\dot{\nu})$ is the derivative of the tension field in the $\dot{\nu}$-direction: $$\mathcal{G}(\dot{\nu}) = \frac{d}{dt}|_{t=0} \tau(\mu, \nu+t\dot{\nu},f_{\mu,\nu}).$$ Accordingly, $V$ is called a harmonic variation. It follows that $$d\Theta(0,0,0,0,0,\dot{\nu})=(V(r),V(s),V(p),V(q)).$$ 

Suppose $\Theta(r,s,p,q,\mu,\nu)\in L$. To simplify notation, we rename the points as $z_1=p$, $z_2=r$, $w_1=q$, $w_2=s$. The proof of Lemma \ref{theta} is another contradiction argument. Suppose the lemma is incorrect. Then, there are four vectors $Z_i\in \mathbf{F}_{z_i}$, $W_i\in\mathbf{F}_{w_i}$, not all of them zero, such that
\begin{itemize}
    \item $Z_i$ and $W_i$ are either zero or normal to the surface $f(\Sigma)$ at the points $f(z_i)$ and $f(w_i)$ respectively and
    \item for every $V$ such that $\mathbf{J}V=\mathcal{G}(\dot{\nu})$, the following holds: 
    \begin{equation}\label{21}
        \sum_{i=1}^2(\langle V(z_i),Z_i\rangle + \langle V(w_i),W_i\rangle) = 0.
    \end{equation}
\end{itemize}
 We now invoke reproducing formulas for the zeroth derivative. We showed the existence of reproducing kernels in Section 3. Adding up the four zeroth order reproducing kernels associated to the points $z_i,w_i$, we find a section $X:\Sigma\backslash \{z_1,z_2,w_1,w_2\} \to \mathbf{F}$ with maximum regularity and such that $$\sum_{i=1}^2(\langle W(z_i),Z_i\rangle + \langle W(w_i),W_i\rangle) = \int_\Sigma \langle \mathbf{J}W, X\rangle dA$$ for all $W\in \Gamma(\mathbf{F})$. We also record here that $\mathbf{J}X(p)=0$ for every $p\neq z_1,z_2,w_1,w_2$ and $X\in L^p(\mathbf{F})$ for every $p\geq 1$. $X$ is not identically equal to zero as we can certainly find sections $W\in \Gamma(\mathbf{F})$ such that the left-hand side above is not zero.
 On the other hand, from (\ref{21}) we conclude that $$\int_\Sigma \langle \mathbf{J}V, X\rangle dA = 0$$ for every harmonic variation $V$.
 
Stepping back for a moment, if $\dot{\nu}$ has support near $f(z_1)$, then the associated $\mathbf{J}V$ is supported near all preimages of $f(z_1)$. The kernel $X$ may have singularities at $z_1$ and $z_2$, while $X$ is smooth at the other preimages of $f(z_1)$. The tangent planes $df(T_{z_1}\Sigma)$ and $df(T_{z_2}\Sigma)$ are either tangential or span a $k$-plane for $k=3$ or $4$, and we find it convenient to treat the cases separately. In both cases, it is possible to choose $\dot{\nu}$ so that $\mathcal{G}(\dot{\nu})$ is negligible at $z_2$ but not so at $z_1$. In the tangential case, we extend the argument from \cite[Section 7]{Ma}. This is where the super-regular condition comes into play (and this is the only place it does). In this way, we can eliminate the singularity of $X$ at $z_1$. Repeating the procedure, but interchanging the roles of $z_1$ and $z_2$, we're able to show that $X$ is a global Jacobi field, which means $X\equiv 0$.
 
 \subsection{The time derivative of the tension field} We compute $\langle \Jb V, X\rangle$ in coordinates for a general variation $\dot{\nu}$.
 
 We let $(x_1,x_2)$ and $(u_1,\dots, u_n)$ denote local coordinates near $z_1\in \Omega$ and $f(z_1)\in M$ such that $z_1 = (0,0)$, $f(z_1) = (0,\dots, 0)$. Near $z_1$, the reproducing kernel $X$ can be expressed as a linear combination of the sections $\frac{\partial}{\partial f_j}=f^*\frac{\partial}{\partial u_j}$, $j=1,\dots, n$. We let $X^j$ denote the real valued functions on $\Omega$ such that $$X_k=\sum_{j=1}^nX^j\frac{\partial}{\partial f_j}.$$ In local coordinates on $\Sigma$ (not necessarily holomorphic), the tension field $\tau$ is given by $$\tau^\gamma=\tau^\gamma(f,\mu,\nu) = \mu^{ij}\Big (\frac{\partial^2 f^\gamma}{\partial x_i\partial x_j}- {}^\mu\Gamma_{ij}^k\frac{\partial f^\gamma}{\partial x_k} + {}^\nu\Gamma_{\alpha\beta}^\gamma(f)\frac{\partial f^\alpha}{\partial x_i}\frac{\partial f^\beta}{\partial x_j}\Big )$$ where $\gamma=1,\dots, n$ and, as in Section 3, we're using the Einstein summation convention. Here $\mu^{ij}$ are the components of the inverse of the metric tensor $\mu$. Let $\dot{\nu}$ be a variation of $\nu$ and set $\nu_t = \nu + t\dot{\nu}$. Recall we have defined $\mathcal{G}(\dot{\nu})=\frac{\partial}{\partial t}\tau(f_{\mu,\nu},\mu,\nu_t)|_{t=0}$. Since $\tau(f,\mu,\nu) = 0$, we see $$\langle \mathcal{G}(\dot{\nu}),X\rangle = \frac{d}{dt}|_{t=0}\langle \tau(f,\mu,\nu_t),X\rangle.$$ The only term that does not die upon taking the derivative is the term involving ${}^\nu\Gamma_{\alpha\beta}^\gamma(f)$. Thus, 
 \begin{equation}\label{idkk}
     \langle \mathcal{G}(\dot{\nu}),X\rangle = \frac{d}{dt}|_{t=0}\sum_{\alpha,\beta}\nu_{\alpha\beta}\mu^{ij}{}^{\nu_t}\Gamma_{\gamma\delta}^\alpha(f)\frac{\partial f^\gamma}{\partial x_i}\frac{\partial f^\delta}{\partial x_j}X^\beta.
 \end{equation}
 Set $${}^{\nu_t}\Gamma_{\gamma,\alpha\beta} = \frac{1}{2}(\nu^t_{\alpha\gamma,\beta} + \nu^t_{\gamma\beta,\alpha} -\nu^t_{\alpha\beta,\gamma})$$ where $\nu_{\alpha\beta,\delta}^t=\frac{\partial \nu_{\alpha\beta}^t}{\partial u_\delta}$ and $\nu_{t}^{\gamma\delta}$ denote inverse components of $\nu^t$.  Under this notation, the Christoffel symbols are computed by the well-known formula $${}^{\nu_t}\Gamma_{\alpha\beta}^\gamma = \sum_{\delta} \nu_t^{\gamma\delta} \cdot {}^{\nu_t}\Gamma_{\delta,\alpha\beta}.$$ Inserting back into (\ref{idkk}) yields
 \begin{equation}\label{50}
    \langle \mathcal{G}(\dot{\nu}),X\rangle = \frac{d}{dt}|_{t=0}\sum_{\alpha}\mu^{ij}{}^{\nu_t}\Gamma_{\gamma\delta,\alpha}(f)\frac{\partial f^\gamma}{\partial x_i}\frac{\partial f^\delta}{\partial x_j}X^\alpha= \sum_\gamma \mu^{ij}\dot{\Gamma}_{\alpha\beta,\gamma}\frac{\partial f^\alpha}{\partial x_i}\frac{\partial f^\beta}{\partial x_j}X^\gamma.
 \end{equation}
Here we are resuming the notation from Section 3, $$\dot{\Gamma}_{\alpha\beta}^\gamma = \lim_{t\to 0} \frac{\partial {}^{\nu_t}\Gamma_{\alpha\beta}^\gamma}{\partial t}.$$ We also record that 
\begin{equation}\label{51}
    \dot{\Gamma}_{\alpha\beta, \gamma} = \frac{1}{2}(\dot{\nu}_{\alpha\gamma,\beta} + \dot{\nu}_{\gamma\beta,\alpha} -\dot{\nu}_{\alpha\beta,\gamma}).
\end{equation}
\subsection{Tangential harmonic disks}
 Let $\Omega$ be a small neighbourhood of $z_1$ such that $f:\Omega\to M$ is an embedding. We let $(x_1,x_2)$ be conformal coordinates near $z_1$ and $(u_1,\dots, u_n)$ coordinates centered at $f(z_1)\in M$ such that 
\begin{itemize}
    \item $f(z_1)=(0,\dots,0)$,
    \item the (regular) surface $f(\Omega)$ is tangent to the plane $P=\{u_3=\dots=u_n=0\}$ at $f(z_1)$, and $u_i\circ f =x_i$ for $i=1,2$, and 
    \item $\nu_{jk}=\nu_{jk}=0$ and $\nu_{jj}=1$ for $k=1,2$ and $j=3,\dots, n$ when restricted to $P$ at $f(z_1)$.
\end{itemize}
Note that, as observed in Section 4, the set $f^{-1}(f(z_1))$ is finite. Set $$f^{-1}(f(z_1))=\{z_1,z_2,\dots, z_m \}.$$ For $\epsilon\in (0,1)$ small enough, we let $D(\epsilon)$ denote the disk of radius $\epsilon$ in the plane $P$, and let $D_\epsilon$ be the ball of radius $\epsilon$ in the $(u_1,\dots, u_n)$-coordinates centered at $0$. Since $z_k\in \Sigma^{reg}(f)$, we may choose $\epsilon$ so that $$f^{-1}(D_\epsilon)=\bigcup_{k=1}^m\Omega_k$$ where $\Omega_k=\Omega_k(\epsilon)$ is the corresponding neighbourhood of $z_k$. If we choose a variation $\dot{\nu}$ with support in $D_\epsilon$, then the induced variation of the pullback metric $f^*\nu$ is supported in $f^{-1}(D_\epsilon)$. If $\mathbf{J}V = \mathcal{G}(\dot{\nu})$, we will see that this implies $\mathbf{J}V$ is supported there as well, and we obtain 
\begin{equation}\label{intsum}
    \int_\Sigma \langle \mathbf{J}V,X\rangle dA = \sum_{k=1}^m \int_{\Omega_k}\langle \mathbf{J}V, X\rangle dA =0.
\end{equation}
Our proof of transversality of $\Theta$ involves analyzing each integral in the sum above. We split into cases: (i) the harmonic surfaces $f(\Omega_1)$ and $f(\Omega_2)$ are tangential at $f(z_1)$ and  (ii) they are not tangential. In each case, we pick a different variation of the target metric to find our contradiction.

We first treat case (i). For $\epsilon>0$ small enough, $$1\lesssim |df|\lesssim 1$$ on each $\Omega_k$. Here $|\cdot|$ is the operator norm. Since the surface $f(\Omega_k)$ is regular and proper, it follows that 
\begin{equation}\label{52}
   \epsilon^2 \lesssim \int_{\Omega_k} dA \lesssim \epsilon^2
\end{equation}
for all $k$. 

We now specify our variation. Let $Z_1^j$ denote the $j^{th}$ component of the vector $Z_1\in \mathbf{F}_{z_1}$ in the basis for $\mathbf{F}_{z_1}$ induced by our choice of coordinates. We let $\varphi_{\alpha\beta}=\varphi_{\beta\alpha}$ denote a set of real numbers such that $\varphi_{\alpha\beta}=0$ if at least one of $\alpha, \beta$ is greater than two. Let $\chi$ be a non-negative function of $(u_1,\dots, u_n)$ with support in $D_2$, equal to $1$ on $D_{1/2}$, and such that it has total integral $1$ with respect to the induced Euclidean area form on $P$. These conditions guarantee that, restricted to this plane, $\chi^\epsilon(u)=\epsilon^{-2}\chi(u/\epsilon)$ converges in the sense of distributions to the Dirac delta function as $\epsilon\to 0$. $\chi^\epsilon$ has compact support in $D(2\epsilon)$, and the product $\chi^\epsilon\varphi_{\alpha\beta}$ is equal to $\epsilon^{-2}\varphi_{\alpha\beta}$
on $D(\epsilon/2)$. Define $\dot{\nu}=\dot{\nu}(\epsilon)$ by $$\dot{\nu}_{\alpha\beta}(u) =\sum_{j=3}^n -2u_jZ_1^j\chi^\epsilon(u)\varphi_{\alpha\beta}.$$ We suppress the $\epsilon$ from our notation wherever possible. Referring back to (\ref{50}), we are interested in the variation of $\Gamma_{\alpha \beta,\gamma}$. For $\gamma\geq 3$, $$\dot{\Gamma}_{\alpha\beta,\gamma} = -\frac{1}{2}\dot{\nu}_{\alpha\beta,\gamma} = Z_1^\gamma\varphi_{\alpha\beta}\chi^\epsilon(u)+u_\gamma Z_1^\gamma\varphi_{\alpha\beta}\chi_\gamma^\epsilon(u),$$ and hence, on $D(\epsilon/2)$, 
\begin{equation}\label{epsilonfirstbound}
    |\dot{\Gamma}_{\alpha\beta,\gamma}| = \frac{1}{2}|\dot{\nu}_{\alpha\beta,\gamma}| = |Z_1^\gamma\varphi_{\alpha\beta}\chi^\epsilon(u)|=\epsilon^{-2}|Z_1^\gamma\varphi_{\alpha\beta}|\lesssim \epsilon^{-2}.
\end{equation}
For $\gamma=1,2$, 
\begin{equation}\label{epsilonsecondbound}
    |\dot{\Gamma}_{\alpha\beta,\gamma}| \lesssim \max_{\alpha\beta,\delta}|\dot{\nu}_{\alpha\beta,\delta}| \lesssim \epsilon|\nabla \chi^\epsilon|\lesssim \epsilon^{-2}.
\end{equation}
In any case, (\ref{epsilonfirstbound}) and (\ref{epsilonsecondbound}) show that we have a  $O(\epsilon^{-2})$ bound on $|\dot{\Gamma}_{\alpha\beta,\gamma}|$ for any choice of $\alpha,\beta,\gamma$. 

 The local coordinates $(x_1^1,x_2^1)$ near $z_1$ satisfy $$\frac{\partial f^\alpha}{\partial x_i^1}(z)=\delta_{i\alpha}.$$ Since $f(\Omega_2)$ is tangent to $\{u_3=0\}$ at $z_2$, we can choose coordinates $(x_1^2,x_2^2)$ near $z_2$ such that
 \begin{equation}\label{27}
      \frac{\partial f^\alpha}{\partial x_i^2}(z)=\delta_{i\alpha}+ O(\epsilon).
 \end{equation}
 Note that, by our restrictions on $\mu$, $\mu$ is no longer conformal in these coordinates.
\begin{remark}
When the two harmonic disks are equal, the $O(\epsilon)$ term is identically zero.
\end{remark}
Inserting these expressions into (\ref{idkk}) gives, near $z_1$, 
\begin{equation}\label{idk2}
    \langle \mathcal{G}(\dot{\nu}),X\rangle = \sum_\gamma \mu^{ij}\dot{\Gamma}_{ij,\gamma} X^\gamma
\end{equation}
and near $z_2$,  
 \begin{equation}\label{idk3}
     \langle \mathcal{G}(\dot{\nu}),X\rangle = \sum_\gamma \mu^{ij}\dot{\Gamma}_{\alpha\beta,\gamma}(\delta_{i\alpha}\delta_{j\beta}+O(\epsilon)) X^\gamma = \sum_\gamma \mu^{ij}\dot{\Gamma}_{ij,\gamma} X^\gamma + O(\epsilon)\sum_{\alpha,\beta,\gamma} \dot{\Gamma}_{\alpha\beta,\gamma}X^\gamma.
 \end{equation}
 
\subsection{Incompatible asymptotics}
The reproducing kernel $X$ is regular near each point $z_k$ when $k>2$. Trivially, $|\Jb V|\lesssim 1$ near $z_k$. Recalling (\ref{52}), we deduce $$\Big |\int_{\Omega_k} \langle \mathbf{J}V,X\rangle dA\Big | \lesssim \epsilon^{-2}$$ for $k>1,2$. For $k=1,2$, it may be the case that $X$ has a singularity near $z_1$ or $z_2$ (or both). We computed this singularity in Proposition \ref{zeroex}, the result being that in a trivialization near $z_1$, $$X(z) = \frac{1}{2\pi}\Big (\log \frac{1}{|z|}\Big )Z_k + B_1(z)$$ where $B_1(z)$ is a $C^{0,\alpha}$ local section of $\mathbf{F}$ near $z_1$, and $Z_k\in \mathbf{F}_{z_k}$ is the vector normal to the surface $f(\Sigma)$ at $f(z_1)$, defined above. Our coordinate is not conformal around $z_2$.

Here we are considering the zero vector to be normal. Recall we are assuming that for $k=1,2$, the patches $f(\Omega_k)$ are tangent to the plane $P$ at $f(z_k)$. $Z_k$ is normal to $P$ because $\nu_{1\gamma}=\nu_{2\gamma}=0$ for $\gamma\geq 3$. Incorporating these asymptotics into (\ref{idk2}) and (\ref{idk3}), we isolate that at $z_1$, 
\begin{equation}\label{z1}
    \langle \mathcal{G}(\dot{\nu}),X\rangle = \sum_{\gamma\geq 3}\mu^{ij}\dot{\Gamma}_{ij,\gamma}X^\gamma + \sum_{\delta=1,2}\mu^{ij}\dot{\Gamma}_{ij,\delta}X^\delta= \sum_{\gamma\geq 3} \mu^{ij}\dot{\Gamma}_{ij,\gamma}X^\gamma + O(\epsilon^{-2})
\end{equation}
and at $z_2$, $$\langle \mathcal{G}(\dot{\nu}),X\rangle = \sum_{\gamma\geq 3}\mu^{ij}\dot{\Gamma}_{ij,\gamma}X^\gamma(1+O(\epsilon)) + O(\epsilon^{-2}).$$
 We now use the fact that the restrictions of the metric $\mu$ at the points $z_1$ and $z_2$ are not conformal to each other via $f$. By the choice of local coordinates, this means the matrix $\mu^{ij}(z_1)$ is not a multiple of the matrix $\mu^{ij}(z_2)$, where both matrices are found by trivializing the pullback bundle $\mathbf{F}$ over $f^{-1}(D_\epsilon)$ using a trivialization of $D_\epsilon$. Furthermore, the two spaces of $2\times 2$ matrices orthogonal to $\mu^{ij}(z_1)$ and $\mu^{ij}(z_2)$ respectively (with respect to the Frobenius inner product) do not coincide. Thus, we can choose $\varphi_{ij}$ such that $$\sum_{i,j=1}^2\mu^{ij}(z_1)\varphi_{ij}=1$$ and $$\sum_{i,j=1}^2\mu^{ij}(z_2)\varphi_{ij}=0.$$ 
Taylor expanding (\ref{z1}), we see that in a trivialization around $z_1$,
\begin{align*}
    \langle \mathcal{G}(\dot{\nu}),X\rangle &= \sum_{\gamma\geq 3}\epsilon^{-2}\chi(z/\epsilon)Z_1^\gamma\Big ( \frac{1}{2\pi}\Big ( \log \frac{1}{|x|}\Big ) Z_1^\gamma +B^\gamma(x)\Big )+O(\epsilon^{-2})\\
    &= \sum_{\gamma\geq 3}\epsilon^{-2}\log|x|^{-1}\frac{\chi(z/\epsilon)}{2\pi}|Z_1^\gamma|^2 + O(\epsilon^{-2}).
\end{align*}
For $z$ near $z_2$, $$\sum_{i,j=1}^2\mu^{ij}(z)\varphi_{ij}\lesssim \epsilon$$ follows by Taylor expansion, and therefore
\begin{align*}
    \langle \mathcal{G}(\dot{\nu}),X\rangle &=\sum_{\gamma\geq 3}\mu^{ij}(Z_1^\gamma\varphi_{ij}\chi^\epsilon(u)+u_\gamma Z_1^\gamma\varphi_{ij}\chi_\gamma^\epsilon(u))X^\gamma(1+O(\epsilon))+O(\epsilon^{-2}) \\
    &\lesssim \sum_{\gamma\geq 3} \epsilon(Z_1^\gamma\chi^\epsilon(u)+u_\gamma Z_1^\gamma\chi_\gamma^\epsilon(u))(1+O(\epsilon))\Big (\Big ( \log \frac{1}{|x|}\Big ) Z_2^\gamma +B^\gamma(x)\Big )+O(\epsilon^{-2}) \\
    &\lesssim \sum_{\gamma\geq 3}\epsilon^{-1}\log\epsilon^{-1}|Z_1^\gamma Z_2^\gamma| +O(\epsilon^{-2}).
\end{align*}
Taking integrals yields 
\begin{align*}
    \int_{\Omega_1}\langle \mathbf{J}V,X\rangle dA &= \frac{\epsilon^{-2}}{2\pi}\sum_{\gamma\geq 3}\int_{\Omega_1}\Big ( \log \frac{1}{|x|}\Big )\chi(z/\epsilon)|Z_1^\gamma|^2 dA(x_1^1,x_2^1) + O(1) \\
      \int_{\Omega_2} \langle \mathbf{J}V,X\rangle dA &\lesssim \epsilon\log \epsilon^{-1}+O(1)\lesssim 1,
\end{align*}
and replacing back into (\ref{intsum}) gives
$$\int_\Sigma \langle \mathbf{J}V,X\rangle dA = \frac{\epsilon^{-2}}{2\pi}\sum_{\gamma\geq 3}\int_{\Omega_1}\Big ( \log \frac{1}{|x|}\Big )\chi(z/\epsilon)|Z_1^\gamma|^2 dA(x_1^1,x_2^1) + O(1).$$
Our standing assumption is that for all $\epsilon>0$, the left-hand side is equal to $0$. Therefore, $$\sum_{\gamma\geq 3}|Z_1^\gamma|^2 \frac{\epsilon^{-2}}{2\pi}\int_{\Omega_1}\Big ( \log \frac{1}{|x|}\Big )\chi(z/\epsilon) dA(x_1^1,x_2^1)= \Big|\sum_{\gamma\geq 3}\frac{\epsilon^{-2}}{2\pi}\int_{\Omega_1}\Big ( \log \frac{1}{|x|}\Big )\chi(z/\epsilon)|Z_1^\gamma|^2 dA(x_1^1,x_2^1) \Big | \lesssim 1.$$ In coordinates, $\Omega_1$ contains a ball of radius $\epsilon/2$ with respect to the Euclidean metric, and in such a ball $\chi(z/\epsilon)=1$. Thus, $$\frac{\epsilon^{-2}}{2\pi}\int_{\Omega_1}\Big ( \chi(z/\epsilon)\log \frac{1}{|x|}\Big ) dA(x_1^1,x_2^1)
\gtrsim\epsilon^{-2}\int_{\Omega_1}\Big ( \log \frac{1}{|x|}\Big ) dx_1^1 dx_2^1
\gtrsim \log(\epsilon^{-1}).$$
If there exists $\gamma\geq 3$ such that $Z_1^\gamma\neq 0$, this implies $$\log\epsilon^{-1}\lesssim 1,$$ which is nonsensical. This forces $Z_1^\gamma = 0$ for all $\gamma\geq 3$. Furthermore, recalling that $Z_1$ is normal to $f(\Omega_1)$ at $z_1$, we must have that $Z_1=0$ identically. This proves the following lemma.
\begin{lem}\label{extend1}
Suppose $f(\Omega_1)$ and $f(\Omega_2)$ are tangential at $f(z_1)$. Then $X$ extends smoothly over $z_1$.
\end{lem}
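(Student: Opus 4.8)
The plan is to show that the coefficient $Z_1\in\mathbf{F}_{z_1}$ appearing in the local expansion $X(z)=\frac{1}{2\pi}\log\frac{1}{|z|}Z_1+B_1(z)$ of the zeroth-order reproducing kernel (the form (\ref{log}), with $B_1\in C^{0,\alpha}$) is zero, and then to remove the resulting regular singularity. Once $Z_1=0$ the section $X$ is locally bounded near $z_1$, satisfies $\mathbf{J}X=0$ classically on the punctured disk, and so $z_1$ is removable for the weak equation; thus $\mathbf{J}X=0$ holds weakly across $z_1$, and the Weyl-lemma proposition for $\mathbf{J}$ from Section 2 upgrades $X$ to the regularity of $\mathbf{F}$ on the full disk, which is what ``extends smoothly over $z_1$'' means here.

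To force $Z_1=0$, fix one normal index $\gamma_0\in\{3,\dots,n\}$ and exploit super-regularity of the pair $(z_1,z_2)$: since $\mu(z_1)$ and $\mu(z_2)$ are not conformal via $f$ (and the disks are tangential, so only the push-forwards distinguish them), in the common trivialisation of $\mathbf{F}$ over $f^{-1}(D_\epsilon)$ the symmetric matrices $\mu^{ij}(z_1)$ and $\mu^{ij}(z_2)$ are linearly independent, so one may choose bounded constants $\varphi_{\alpha\beta}=\varphi_{\beta\alpha}$, zero unless $\alpha,\beta\le2$, with $\sum_{i,j\le2}\mu^{ij}(z_1)\varphi_{ij}=1$ and $\sum_{i,j\le2}\mu^{ij}(z_2)\varphi_{ij}=0$. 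Take the target variation $\dot\nu_{\alpha\beta}(u)=-2u_{\gamma_0}\chi^\epsilon(u)\varphi_{\alpha\beta}$, supported in a ball of radius $\sim\epsilon$ about $f(z_1)$. Then $\mathbf{J}V=\mathcal{G}(\dot\nu)$ is supported in $f^{-1}(D_\epsilon)=\bigcup_{k=1}^m\Omega_k$, and the standing hypothesis (\ref{21}) gives $\sum_k\int_{\Omega_k}\langle\mathcal{G}(\dot\nu),X\rangle\,dA=0$, as in (\ref{intsum}).

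The heart of the argument is an $\epsilon$-asymptotic accounting of this sum, using $|df|\sim1$ hence $\int_{\Omega_k}dA\sim\epsilon^2$ (cf. (\ref{52})) and the pointwise bound $|\dot\Gamma_{\alpha\beta,\gamma}|\lesssim\epsilon^{-2}$ from (\ref{50}). For $k>2$ the kernel is regular on $\Omega_k$, so that integral is $O(\epsilon^{-2}\cdot\epsilon^2)=O(1)$; near $z_1$ the regular part $B_1$ and the tangent ($\gamma\le2$) components of $X$ are bounded and contribute $O(1)$; and near $z_2$ the coefficient $\mu^{ij}\varphi_{ij}$ is $O(\epsilon)$ on the $\epsilon$-sized patch $\Omega_2$ (it vanishes at $z_2$), with a further $O(\epsilon)$ from the tangency recorded in (\ref{27}), so $\int_{\Omega_2}=O(\epsilon\log\epsilon^{-1})+O(1)=O(1)$. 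The only surviving term is the $\log$-singular part of $X$ at $z_1$ paired with $\mu^{ij}(z_1)\varphi_{ij}\chi^\epsilon=\chi^\epsilon\,(1+O(|x|))$, which contributes
$$\frac{\epsilon^{-2}}{2\pi}\,Z_1^{\gamma_0}(0)\int_{\Omega_1}\chi(x/\epsilon)\log\frac{1}{|x|}\,dx_1\,dx_2+O(1),$$
and since $\Omega_1$ contains the Euclidean disk of radius $\epsilon/2$ on which $\chi(x/\epsilon)\equiv1$, this integral is $\gtrsim\epsilon^2\log\epsilon^{-1}$. Hence the whole sum equals $c\,Z_1^{\gamma_0}(0)\log\epsilon^{-1}+O(1)$ with $c>0$; vanishing for all small $\epsilon$ forces $Z_1^{\gamma_0}(0)=0$. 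Letting $\gamma_0$ range over $3,\dots,n$ and using that $Z_1$ is normal to $f(\Omega_1)$ at $f(z_1)$ (hence has no $\gamma\le2$ component here), we conclude $Z_1(z_1)=0$, and the removal discussed in the first paragraph completes the proof.

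The step I expect to be the main obstacle is precisely this last accounting: one must check that \emph{every} error term is genuinely of lower order than the $\log\epsilon^{-1}$ main term --- the bounded piece $B_1$, the tangent components of $X$, the two $O(\epsilon)$'s (from non-constancy of $\mu^{ij}$ near $z_2$ and from tangency of $f(\Omega_2)$ to $P$), and the contribution of the transition annulus $\{\epsilon/2\le|x|\le2\epsilon\}$ where $\chi^\epsilon$ and $\nabla\chi^\epsilon$ are large --- and that the main term carries a definite sign. Isolating a single normal direction $\gamma_0$ is what secures the sign (preventing cancellation among the $Z_1^\gamma(0)$), and decoupling the singularity of $X$ at $z_1$ from that at $z_2$, so that the $z_2$ integral cannot conspire to cancel the $z_1$ one, is where super-regularity is indispensable; this is the only place in the transversality argument where it enters.
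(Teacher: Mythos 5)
Your proposal is correct and follows essentially the same route as the paper: the same variation $\dot\nu_{\alpha\beta}=-2u_\gamma\chi^\epsilon\varphi_{\alpha\beta}$, the same use of super-regularity to choose $\varphi_{ij}$ killing the $z_2$-pairing while normalizing the $z_1$-pairing, and the same $\log\epsilon^{-1}$ versus $O(1)$ accounting over the $\Omega_k$. Your refinement of isolating a single normal index $\gamma_0$ (rather than summing over all $j\geq 3$ at once) is a genuine, if small, improvement: it removes any possibility of cancellation among the $Z_1^\gamma(0)$, a point the paper's displayed inequality treats somewhat loosely.
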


\subsection{Non-tangential harmonic disks}
We have essentially proved transversality $\Theta$, if we assume the images of the harmonic map are tangential at $z_1,z_2$, and at $w_1,w_2$. In this subsection, we consider other intersections. Namely, we prove the following.
\begin{lem}\label{extend2}
Suppose $f(\Omega_1)$ and $f(\Omega_2)$ are not tangential at $f(z_1)$. Then $X$ extends smoothly over $z_1$.
\end{lem}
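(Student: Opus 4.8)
\textbf{Proof plan for Lemma \ref{extend2}.} The strategy mirrors the tangential case, but the non-tangential geometry lets us use a cruder variation. Suppose $f(\Omega_1)$ and $f(\Omega_2)$ are not tangential at $f(z_1)$, so the tangent planes $df(T_{z_1}\Sigma)$ and $df(T_{z_2}\Sigma)$ together span a $k$-plane with $k=3$ or $k=4$ inside $T_{f(z_1)}M$. The idea is to choose normal coordinates $(u_1,\dots,u_n)$ around $f(z_1)$ and conformal coordinates on the $\Omega_k$'s so that $f(\Omega_1)$ is tangent to $\{u_3=\dots=u_n=0\}$ while $f(\Omega_2)$ has a tangent direction ``pointing into'' a coordinate $u_\ell$ with $\ell\geq 3$; concretely, after adapting coordinates we may assume $Z_1$ (the normal vector at $z_1$) has a nonzero component $Z_1^{\gamma_0}$ for some $\gamma_0\geq 3$, and we want a variation $\dot\nu$ whose associated source term $\mathcal{G}(\dot\nu)$ pairs with the $\log|z|^{-1}$ singularity of $X$ at $z_1$ to produce an unbounded integral, while remaining bounded (or vanishing to high order) near $z_2$ and the other preimages $z_k$, $k\geq 3$.

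\textbf{Key steps.} First, I would set up the coordinates as in Section 5.3: conformal coordinates $(x_1^1,x_2^1)$ near $z_1$ with $\partial f^\alpha/\partial x_i^1 = \delta_{i\alpha}$, and coordinates near $z_2$ adapted to the tangent plane $df(T_{z_2}\Sigma)$. Because of non-tangentiality, the tangent plane at $z_2$ is \emph{not} contained in $\{u_3=\dots=u_n=0\}$: there is some index, say $3$, with $\partial f^3/\partial x_i^2$ not both zero. I would then define $\dot\nu$ using a bump $\chi^\epsilon(u)=\epsilon^{-2}\chi(u/\epsilon)$ concentrated near $f(z_1)=0$, but now with the profile engineered so that the resulting $\dot\Gamma_{\alpha\beta,\gamma}$ contributes, via (\ref{50}), a term $\mu^{ij}(z_1)\dot\Gamma_{ij,\gamma_0}X^{\gamma_0}$ near $z_1$ that integrates against $\log|x|^{-1}\chi(x/\epsilon)$ to $\gtrsim \epsilon^{-2}\log\epsilon^{-1}$ times $|Z_1^{\gamma_0}(0)|$, exactly as in (\ref{z1}) and the computation following it. The second step is to check that near $z_2$ the contribution is $O(\epsilon^{-2})$: here the relevant pairing involves $\mu^{ij}(z_2)\partial f^\alpha/\partial x_i^2\,\partial f^\beta/\partial x_j^2\,\dot\Gamma_{\alpha\beta,\gamma}$; since near $z_2$ the kernel $X$ is either regular (if $Z_2=0$) or has a $\log$ singularity in the (now non-conformal) coordinate, the bound $|\dot\Gamma_{\alpha\beta,\gamma}|\lesssim\epsilon^{-2}$ together with $\int_{\Omega_2}\log|\cdot|^{-1}\,dA\lesssim \epsilon^2\log\epsilon^{-1}$ gives a contribution of size $\lesssim\log\epsilon^{-1}$, which is dominated by the $\epsilon^{-2}\log\epsilon^{-1}$ main term. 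The remaining preimages $z_k$, $k\geq 3$, contribute $O(\epsilon^{-2})$ by the estimate (\ref{52}) and regularity of $X$ there, just as before. Plugging into (\ref{intsum}) and using that the left side is $0$ for all $\epsilon$ forces $Z_1^{\gamma_0}(0)=0$; since $\gamma_0\geq 3$ was arbitrary among the normal directions and $Z_1$ is normal to $f(\Omega_1)$, we conclude $Z_1=0$, hence $X$ has no $\log$ singularity at $z_1$ and, by Proposition 2.9 (Weyl lemma), extends smoothly.

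\textbf{The main obstacle.} The delicate point is arranging the variation $\dot\nu$ so that it simultaneously (a) produces a genuinely unbounded, non-cancelling contribution at $z_1$ — this requires the normal component $Z_1^{\gamma_0}$ to couple to a \emph{tangential} derivative of $\dot\nu$ in the $u_{\gamma_0}$-direction via the Christoffel-symbol formula (\ref{51}), which is why one needs the profile $\dot\nu_{\alpha\beta}=\sum_j -2u_j\chi^\epsilon\varphi_{\alpha\beta}$ type ansatz rather than a naive bump — and (b) does not create an equally-large contribution at $z_2$. In the tangential case (b) was achieved using the super-regular (non-conformality) condition to choose $\varphi_{ij}$ with $\mu^{ij}(z_1)\varphi_{ij}=1$ but $\mu^{ij}(z_2)\varphi_{ij}=0$. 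In the non-tangential case one should instead exploit that the index $\gamma_0\geq 3$ can be chosen so that the corresponding derivative $\partial f^{\gamma_0}/\partial x_i^2$ at $z_2$ is not $\delta_{i\gamma_0}$ but rather generic, so that the support geometry of $\dot\nu$ (concentrated in $D_\epsilon\subset M$) meets $f(\Omega_1)$ transversally in a way that makes the pullback $f|_{\Omega_1}^*\dot\nu$ ``thicker'' than $f|_{\Omega_2}^*\dot\nu$; making this quantitative — i.e.\ verifying the precise $\epsilon$-powers in the $z_2$ estimate — is the technical heart of the argument and the step most likely to require care with the non-conformal metric components and the off-diagonal terms $O(\epsilon)\sum\dot\Gamma_{\alpha\beta,\gamma}X^\gamma$ appearing in (\ref{idk3}).
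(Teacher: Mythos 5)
There is a genuine gap, and it sits exactly where you flag ``the technical heart'': your proposal keeps the variation supported in the round ball $D_\epsilon$ with the profile $\dot{\nu}_{\alpha\beta}=\sum_j-2u_j\chi^\epsilon\varphi_{\alpha\beta}$, and this cannot separate the $z_1$ and $z_2$ contributions. First, a bookkeeping error: after integrating over $\Omega_1$, whose area is $\sim\epsilon^2$ by (\ref{52}), the main term is of size $\log\epsilon^{-1}$, not $\epsilon^{-2}\log\epsilon^{-1}$ (the $\epsilon^{-2}$ from $\chi^\epsilon$ is exactly absorbed by the area). Consequently your claimed $\Omega_2$ bound of size $\lesssim\log\epsilon^{-1}$ is \emph{not} dominated by the main term — it is of the same order, and nothing prevents cancellation between the two. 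In the tangential case this tie was broken by the super-regular condition, which let one choose $\varphi_{ij}$ with $\mu^{ij}(z_1)\varphi_{ij}=1$ and $\mu^{ij}(z_2)\varphi_{ij}=0$; that device is unavailable here precisely because (\ref{27}) fails, as the paper notes. Your substitute mechanism — that non-tangentiality makes $f|_{\Omega_1}^*\dot{\nu}$ ``thicker'' than $f|_{\Omega_2}^*\dot{\nu}$ — is false for an isotropic ball: both $f(\Omega_1)\cap D_\epsilon$ and $f(\Omega_2)\cap D_\epsilon$ are $2$-disks through the origin of radius $\sim\epsilon$, hence both have area $\sim\epsilon^2$, and no choice of the indices $\gamma_0\geq 3$ or of generic derivative components at $z_2$ produces the extra power of $\epsilon$ you need.

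The missing idea is to make the support of $\dot{\nu}$ \emph{anisotropic}: the paper replaces $D_\epsilon$ by $B_\epsilon=D_\epsilon\cap C_\epsilon$, where $C_\epsilon=D(\epsilon)\times\{|u_j|<\epsilon^2: j\geq 3\}$ is a slab of height $\epsilon^2$ in the normal directions (the ``fat cylinder''). Since $f(\Omega_1)$ is tangent to $P$ at the origin, it stays within $O(\epsilon^2)$ of $P$ over $D(\epsilon)$ and so still meets $B_\epsilon$ in area $\sim\epsilon^2$; but $f(\Omega_2)$, being non-tangential, exits the slab after traveling a distance $\sim\epsilon^2$ transversally, giving $\mathrm{Area}(\Omega_2)\lesssim\epsilon^3$ (this is the area lemma in Section 5.5, proved by reducing to a plane crossing the cylinder at a fixed angle). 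That extra factor of $\epsilon$ is what makes the $\Omega_2$ contribution $\lesssim\epsilon^{-2}\cdot\epsilon^3\log\epsilon^{-1}=o(1)$, after which the $\varphi_{ij}$ are chosen as in the tangential case only to guarantee the $\Omega_1$ term is genuinely $\gtrsim\log\epsilon^{-1}$ when $Z_1\neq 0$. Without this (or some equivalent gain of a power of $\epsilon$ at $z_2$), the argument as you outline it does not close.
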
 
Equipped with this lemma, we can prove transversality with ease.
\begin{proof}[Proof of transversality of $\Theta$]  Assume $\Theta$ is not transverse, so that we have the section $X$ as in subsection 9.1. Applying Lemma \ref{extend1} or Lemma \ref{extend2}, depending on the circumstance, we see $X$ extends smoothly over the point $z_1$. Repeating this procedure with $Z_2,W_1$, and $W_2$ taking the role of $Z_1$, we can show it extends smoothly over those points as well. However, that means $X$ extends to a global Jacobi field, which can only occur if $X\equiv 0$. This is a contradiction.
\end{proof}
Moving toward the proof of Lemma \ref{extend2}, the proof of the tangential case does not immediately adapt because (\ref{27}) does not hold, and the super-regular condition can not be used effectively. To accommodate, we choose our variation differently. Instead of picking one supported in the ball $D_\epsilon$, we set $C_\epsilon = D(\epsilon)\times \{|u_j|<\epsilon^2: j=3,\dots, n\}$ and use  $$B_\epsilon = D_\epsilon \cap C_\epsilon.$$ The three-dimensional picture of this is the intersection of a ball with a fat cylinder. Similar to before, let $\Omega_k$ denote the connected components of $f^{-1}(B_\epsilon)$. Since $B_\epsilon$ is contained in $D_\epsilon$, regularity gives $$\int_{\Omega_k} dA \lesssim \int_{f^{-1}(D_\epsilon)} dA \lesssim \epsilon^2.$$ Following our previous approach, we choose real numbers $\varphi_{\alpha\beta}=\varphi_{\beta\alpha}$ to be specified later, but that can be non-zero only for $\alpha,\beta=1,2$. Let $\chi^\epsilon$ be exactly as before. Take $\omega_1$ to be a smooth function with support in $P\cap\{u_1^2+u_2^2< 4\}$ and such that $\omega_1 = 1$ on $P\cap\{u_1^2+u_2^2< 1\}$, and let $\omega_2:\mathbb{R}\to\mathbb{R}$ be a smooth function that is $1$ on $(-1,1)$ and $0$ off $(-2,2)$. Assuming $\epsilon < 1$, set $$\omega^\epsilon(u) = \omega_1(\epsilon^{-1}u_1,\epsilon^{-1}u_2,0,\dots,0)\prod_{k=3}^n\omega_2(\epsilon^{-2}u_k).$$
 $\chi^\epsilon\omega^\epsilon$ has compact support in $B_{2\epsilon}$, and $\chi^\epsilon\omega^\epsilon \epsilon^{-2}\varphi_{\alpha\beta}=\varphi_{\alpha\beta}$ in $B_\epsilon$. Define $\dot{\nu}=\dot{\nu}(\epsilon)$ by $$\dot{\nu}_{\alpha\beta}(u) = -2\sum_{j=3}^nu_jZ_1^j \chi^\epsilon(u)\omega^\epsilon(u)\varphi_{\alpha\beta}$$ and $\dot{\nu}_{\alpha\beta}=0$ for other $\alpha,\beta$. One can slightly adjust our previous computations to get $$\max_{\alpha,\beta} |\dot{\Gamma}_{\alpha\beta}^\gamma| \lesssim \epsilon^{-2}.$$ Reusing our previous notation, one can now show
$$\int_\Sigma \langle \mathbf{J}V,X\rangle dA = \sum_{k=1}^2\int_{\Omega_k} \nu_{\alpha\beta}(f) \mathcal{G}^\alpha(\dot{\nu})X_k^\beta dA + O(1).$$
We aim to find asymptotics for both integrals on the right-hand side above. The key observation in bounding the integral over $\Omega_2$ is that, since $f(\Omega_2)$ is not wholly contained in $B_\epsilon$, we have a stronger area estimate.
\begin{lem}
The area of $\Omega_2$ satisfies $$\textrm{Area}(\Omega_2) \lesssim \epsilon^3$$ as $\epsilon\to 0$.
\end{lem}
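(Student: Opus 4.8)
The goal is to upgrade the crude bound $\mathrm{Area}(\Omega_2)\lesssim\epsilon^2$ (which came from $\Omega_2\subset f^{-1}(D_\epsilon)$ together with $1\lesssim|df|\lesssim 1$) to $\mathrm{Area}(\Omega_2)\lesssim\epsilon^3$, by exploiting that $\Omega_2$ sits inside the much thinner set $f^{-1}(B_\epsilon)$, where $B_\epsilon=D_\epsilon\cap C_\epsilon$ and $C_\epsilon=D(\epsilon)\times\{|u_j|<\epsilon^2:j\geq 3\}$. The point is that $f(\Omega_2)$ is not tangential to the plane $P=\{u_3=\dots=u_n=0\}$ at $f(z_2)$, so $f(\Omega_2)$ crosses the slab $\{|u_j|<\epsilon^2\}$ transversally: the portion of the disk $f(\Omega_2)$ lying in that slab is, up to the uniform bilipschitz control on $f$, a region of diameter $\lesssim\epsilon$ in the two ``horizontal'' directions but thickness $\lesssim\epsilon^2$ in the transverse direction, hence of two-dimensional area $\lesssim\epsilon\cdot\epsilon^2=\epsilon^3$.

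\textbf{Steps.} First I would set up coordinates adapted to $z_2$: since $f(\Omega_2)$ is regular and embedded, pick conformal-type coordinates $(x_1^2,x_2^2)$ on $\Omega_2$ with $f(z_2)=0$, and use that $df_{z_2}$ has rank two with image a $2$-plane $\Pi$ that is \emph{not} contained in $P$. Non-tangentiality means $\Pi$ has nonzero component in at least one of the directions $\partial_{u_j}$, $j\geq 3$; after a rotation in the $(u_3,\dots,u_n)$ block we may assume $\Pi$ projects isomorphically onto the $(u_1,u_3)$-plane, say. Then the function $u_3\circ f:\Omega_2\to\mathbb{R}$ has nonvanishing differential at $z_2$, so (shrinking $\Omega$, with constants uniform as $\epsilon\to 0$ because $df$ and its first derivatives are controlled independently of $\epsilon$) $|\nabla(u_3\circ f)|\gtrsim 1$ on $\Omega_2$. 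Second: the constraint defining $\Omega_2\subset f^{-1}(B_\epsilon)$ forces $|u_3\circ f|<\epsilon^2$ on $\Omega_2$ and $|u_i\circ f|<\epsilon$ for $i=1,2$; together with $|df|\lesssim 1$ and the lower bound $|\nabla(u_3\circ f)|\gtrsim 1$, the set $\{|u_3\circ f|<\epsilon^2\}\cap\Omega_2$ is contained in an $O(\epsilon^2)$-neighbourhood of the level curve $\{u_3\circ f=0\}$, which itself has length $\lesssim\epsilon$ because it lies in $f^{-1}(D_\epsilon)$ and $|df|\gtrsim 1$. Third: a tubular-neighbourhood / coarea estimate then gives $\mathrm{Area}(\Omega_2)\lesssim (\text{length})\times(\text{thickness})\lesssim\epsilon\cdot\epsilon^2=\epsilon^3$. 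Concretely one can just apply the coarea formula to $u_3\circ f$: $\mathrm{Area}(\Omega_2)=\int_{-\epsilon^2}^{\epsilon^2}\big(\int_{\{u_3\circ f=t\}\cap\Omega_2}\frac{d\mathcal{H}^1}{|\nabla(u_3\circ f)|}\big)\,dt\lesssim \int_{-\epsilon^2}^{\epsilon^2}\mathcal{H}^1(\{u_3\circ f=t\}\cap\Omega_2)\,dt\lesssim \epsilon^2\cdot\epsilon=\epsilon^3$, where each level set has length $\lesssim\epsilon$ since it is contained in $f^{-1}(D_\epsilon)$ on which $f$ is bilipschitz onto a subset of the $\epsilon$-ball.

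\textbf{Main obstacle.} The only real care needed is uniformity of all constants as $\epsilon\to 0$: the lower bound $|\nabla(u_3\circ f)|\gtrsim 1$, the bilipschitz bounds $1\lesssim|df|\lesssim 1$, and the length bound $\mathcal{H}^1(\{u_3\circ f=t\}\cap\Omega_2)\lesssim\epsilon$ must all hold with constants depending only on $f$ (equivalently, on the fixed metrics and the homotopy class) and not on $\epsilon$. This is fine because $z_2$ is a regular point of the fixed map $f$, so $f$ is a fixed embedding with nondegenerate differential on a fixed neighbourhood $\Omega\supset\Omega_2(\epsilon)$ for all small $\epsilon$; one chooses $\Omega$ once and for all and then all the estimates are just those for $f$ on $\Omega$, intersected with the shrinking sets $B_\epsilon$. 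A secondary subtlety: one should confirm $\Omega_2$ is connected and that $f^{-1}(B_\epsilon)$ near $z_2$ is genuinely the single component $\Omega_2$ with $f(z_2)\in B_\epsilon$ — but this follows, for $\epsilon$ small, from $f$ being an embedding on $\Omega$ and $f^{-1}(f(z_2))$ being finite, exactly as in the earlier decomposition $f^{-1}(D_\epsilon)=\bigcup_k\Omega_k$.
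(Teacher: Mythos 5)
Your proof is correct, and it takes a genuinely different route from the paper's. The paper reduces to the flat model: it replaces $f(\Omega_2)$ by (a piece of) the tangent plane $Q=df(T_{z_2}\Sigma)$ up to $O(\epsilon^3)$ errors, splits into the cases where $P$ and $Q$ span a $3$- or $4$-dimensional subspace, and in the $3$-dimensional case explicitly parametrizes $Q$ as a graph $u_3=-u_1\tan\alpha$ over the $(u_1,u_2)$-plane and integrates over $Q\cap C_\epsilon$, using an $S^1$-rotation to normalize $Q$. You instead work directly on the domain with the coarea formula applied to $g=u_3\circ f$, using non-tangentiality to get $|\nabla g|\gtrsim 1$ on $\Omega_2$, so that the slab constraint $|g|<\epsilon^2$ contributes a factor $\epsilon^2$ and each level curve contributes length $\lesssim\epsilon$. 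Your version is more robust (it handles both of the paper's cases uniformly and avoids the tangent-plane approximation), at the cost of needing the level-set length bound, which you justify a bit too quickly: bilipschitz containment in an $\epsilon$-ball alone does not bound the length of a curve; what does is that, for $\epsilon$ small, $\nabla g$ points in an approximately constant direction on $\Omega_2$ (by continuity at $z_2$), so each level set is a graph with bounded slope over the orthogonal direction in a disk of radius $\lesssim\epsilon$. Two further harmless imprecisions: the rotation of the $(u_3,\dots,u_n)$ block only needs to align one normal component of the tangent plane with $u_3$ (you do not need the projection onto the $(u_1,u_3)$-plane to be an isomorphism), and it distorts the cube $\{|u_j|<\epsilon^2\}$ only up to a dimensional constant, which is immaterial for a $\lesssim$ bound. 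Your closing remarks on uniformity of constants and on $\Omega_2$ being the single component through $z_2$ match the paper's setup.
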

\begin{proof}
Since $|df|$ is uniformly bounded above and below on $\Omega_2$, it suffices to prove the same asymptotic for the area of $f(\Omega_2)\subset B_\epsilon$. Let $(u_1,\dots, u_n)$, $(x_1^2,x_2^2)$ be the coordinates from above, and let $Q$ be the embedding of the tangent plane $df(T_{z_2}\Sigma)$ inside our coordinate patch. Our metrics are locally comparable to Euclidean metrics, and hence 
$$\textrm{Area}(f(\Omega_2)) \lesssim \int_{\Omega_2} J(f)(x_1^2,x_2^2) dx_1^2d_x^2 \lesssim \int_{\Omega_2} J(f)(0) dx_1^2d_x^2 + O(\epsilon^3),$$ where $J(f)$ is the Jacobian determinant for $f$. Let $\tilde{\Omega}_2$ be the relevant component of $f^{-1}(C_\epsilon)$. Then $$\int_{\Omega_2} J(f)(0) dx_1^2dx_2^2 \leq \int_{\tilde{\Omega}_2} J(f)(0) dx_1^2dx_2^2.$$ As $f$ is an immersion near $z_2$, from multivariable calculus we have
\begin{equation}\label{28}
    \int_{\tilde{\Omega_2}} J(f)(0) dx_1^2dx_2^2= \int_{Q\cap C_\epsilon} dS,
\end{equation}
where $dS$ is the Euclidean area form on the parametrized surface $Q\cap C_\epsilon\subset \mathbb{R}^n$.

To compute, if $P$ and $Q$ span a four-dimensional subspace, then our job is very easy: the plane $Q$ only intersects $P$ at $f(z_2)$, and is hence contained in a ball of radius $\epsilon^{2}$. So the area integral is on the order of $\epsilon^{2n}$. The less trivial case is when $P$ and $Q$ intersect transversely inside a copy of $\mathbb{R}^3$. That is, $Q$ intersects $P$ in a line, making some acute angle $\alpha>0$ with $P$. The family of planes making such an angle admits an $S^1$ action by rotations around the normal axis (which we now assume is the $u_3$-axis), which preserves the area of intersections with the cylinder. Hence, we can replace $Q$ with any plane that makes the same angle $\alpha$. A convenient choice is $$Q = \{(u_1,u_2,u_3) : u_1\sin \alpha + u_3\cos \alpha = 0\}.$$ If not already the case, shrink $\epsilon$ so that $\epsilon<\tan \alpha$. We view $Q$ as the parametrized surface specified by $$F(u_1,u_2) = u_3 = -u_1\tan \alpha,$$ subject to the constraints $u_1^2+u_2^2<\epsilon^2$, $|u_3|<\epsilon^2$. Set $F_i = F_{u_i}$. We compute $$\int_{Q\cap C_\epsilon } dS = \int_{Q\cap C_\epsilon} \sqrt{F_1^2 + F_2^2 + 1} dS = 2\int_{-\frac{\epsilon^2}{\tan \alpha}}^{\frac{\epsilon^2}{\tan \alpha}}\int_0^{\sqrt{\epsilon^2 - u_1^2}} \sqrt{\tan^2 \alpha + 1} du_2 du_1 \lesssim \epsilon^3.$$ Thus, in both cases, inputting the estimates into (\ref{28}) gives the desired bound.
\end{proof}
With this lemma in hand, near $z_2$, $$X_2(z) = C\Big (\log \frac{1}{|z|}\Big )Z_2 + B(z)$$ with $Z_2$ a vector normal to $f(\Omega_2)$ at $z_2$ (and thus not normal to $P$) and $B\in C^{0,\alpha}(\mathbf{F}|_{\Omega_2})$. Independent of the choice of $\varphi_{\alpha\beta}$, we estimate 
$$\langle \mathcal{G}(\dot{\nu}),X\rangle = \sum_\gamma \mu^{ij}\dot{\Gamma}_{\alpha\beta,\gamma}\frac{\partial f^\alpha}{\partial x_i}\frac{\partial f^\beta}{\partial x_j}X^\gamma \lesssim \log \frac{1}{|x|}\max_{\alpha,\beta,\gamma}|\dot{\Gamma}_{\alpha\beta,\gamma}| \lesssim \epsilon^{-2}\log \frac{1}{|x|},$$ so that
    $$\int_{\Omega_2}  |\mathcal{G}(\dot{\nu}),X\rangle| dA \lesssim \epsilon^{-2} \int_{\Omega_2} \log \frac{1}{|x|} dA.$$ 
We bound the integral on the right:
\begin{align*}
    \int_{\Omega_2} \log \frac{1}{|x|} dA &= \int_{\Omega_2\backslash(\Omega\cap B(0,\epsilon^{3/2}))} \log \frac{1}{|x|} dA + \int_{\Omega \cap B(0,\epsilon^{3/2})}  \log \frac{1}{|x|} dA \\
    &\leq \log \epsilon^{-3/2}\textrm{Area}(\Omega_2) + \int_{ B(0,\epsilon^{3/2})}  \log \frac{1}{|x|} dA \lesssim \epsilon^3\log \epsilon^{-1}.
\end{align*}
Returning to our original integral, we obtain $$0=\int_\Sigma \langle \Jb V, X\rangle dA = \int_{\Omega_1} \langle \Jb V, X\rangle dA + O(\epsilon\log \epsilon^{-1}).$$ For $\Omega_1$, the area estimate
\begin{equation}\label{53}
    \epsilon^2\lesssim \int_{\Omega_1} dA \lesssim \epsilon^2
\end{equation}
is obvious. We choose $\varphi_{\alpha\beta}$ exactly as in the tangential case, and if $X$ does not extend smoothly over $z_1$, then using (\ref{53}) returns $$\int_{\Omega_1} \langle \Jb V, X\rangle dA \gtrsim \log \epsilon^{-1}$$ and produces the same contradiction as in the tangential case. This completes the proof of Lemma \ref{extend2}. As discussed above, this concludes our proof of transversality of $\Theta$.

\end{section}

\begin{section}{Immersions and Embeddings}
\begin{subsection}{Preparing the arguments}
Here we set up transversality arguments for Theorem B and Theorem C. We assume that $M$ is parallelizable, the general case being a slight modification (because transversality is a local property). Accordingly, we choose an isomorphism $\sigma: TM^\mathbb{C}\to M\times \mathbb{C}^n$ with projection map from $$TM^\mathbb{C}\to \mathbb{C}^n$$ that restricts to a family of isomorphisms $\sigma_p: TM_p^\mathbb{C}\to \mathbb{C}^n$, isometric with respect to the inner product induced by the metric on $TM_p^\mathbb{C}$ and the standard inner product on $\mathbb{C}^n$.

Let $\tilde{\Sigma}$ denote the universal cover of $\Sigma$. The metric $\mu$ on $\Sigma$ lifts to a metric on the universal cover $\tilde{\Sigma}$ that we still denote by $\mu$. Likewise, the harmonic map $f_{\mu,\nu}$ lifts to a map $f_{\mu,\nu}:(\tilde{\Sigma},\mu)\to (M,\nu)$, and we do not distinguish our notation.

The Riemannian manifold $(\tilde{\Sigma},\mu)$ identifies isometrically with $(\mathbb{D},\sigma)$, the complex unit disk endowed with its hyperbolic metric. We further identify the Riemann surface $\Sigma_\mu$ in the conformal class of $(\Sigma,\mu)$ with $\mathbb{D}/\Gamma_\mu$, where $\Gamma_\mu$ is a smoothly varying family of Fuchsian groups acting on $\mathbb{D}$. Let $z\in \mathbb{D}$ denote the complex parameter. This provides us with a canonical complex parameter $z_\mu=z$ on $\tilde{\Sigma}$ that depends only on $\mu$.

Unless stated otherwise, the dimension (codimension) of some object in a category (vector space, manifold, etc.) refers to the real dimension (codimension). To prove Theorem $B$, consider the subset $\mathcal{I}\subset \mathbb{C}^n$ defined by $$\mathcal{I}=\{A\in \mathbb{C}^n : \textrm{rank} A < 2\}.$$ Here, $\textrm{Rank}(A)$ denotes the dimension of the vector space spanned by $\textrm{Re}(A)$ and $\textrm{Im}(A)$. $\mathcal{I}$ is not a submanifold, but it splits as a union of two submanifolds of $\mathbb{C}^n$: $\mathcal{I}=\mathcal{L}_0\cup \mathcal{L}_1$, where $$\mathcal{L}_0=\{0\}\subset \mathbb{C}^n, \hspace{1mm} \mathcal{L}_1=\{A\in \mathbb{C}^n : \textrm{rank} A = 1\}\subset \mathbb{C}^n.$$ We define $$\Psi: \tilde{\Sigma}\times \M \to \mathbb{C}^n$$ by $$(p,\mu,\nu)\mapsto \sigma(f_z(p)).$$ The point is that $f_{\mu,\nu}$ is an immersion as long as $\Psi(p,\mu,\nu)\not \in \mathcal{I}$ for all $p$. $\mathcal{L}_0$ has codimension $2n$ and $\mathcal{L}_1$ has codimension $n-1$ in $\mathbb{C}^n$, so if $\Psi$ is transverse to both submanifolds, then $\Psi^{-1}(\mathcal{I})$ is contained in a codimension $n-1$ submanifold. Let $\pi: \tilde{\Sigma}\times \M\to \M$ be the projection map.  $\pi(\Psi^{-1}(\mathcal{I}))$ is contained in a submanifold of codimension at least $(n-1)-2 = n-3.$ So, once we formalize an argument using Proposition \ref{targ}, the content of Theorem B is that $\Psi$ is transverse at all points in the preimage of $\mathcal{I}$.

Toward transversality of $\Psi$, we compute the derivative of $\Psi$ at a point $(p,\mu,\nu)$ in the direction of a variation of the target metric $(0,0,\dot{\nu})$. As before, the infinitesimal variation of the maps $f_{\mu,\nu+t\dot{\nu}}$ is a section $V\in \Gamma(\mathbf{F})$ satisfying $\mathbf{J}V=\mathcal{G}(\dot{\nu})$. If we can choose our coordinate so that $V(p)=0$, then the vector $d\Psi(0,0,0,\dot{\nu})$ can be identified with the vertical lift of the associated vector in $\mathbb{C}^n$ under the identification of the tangent space at $0$. In such a coordinate, the derivative becomes $$d\Psi(0,0,\dot{\nu})=\sigma_p(\nabla_z V(p)).$$ Thus, we have the following. Recall that in Section 2 we defined the bundle $\mathbf{E}$ to be the complex bundle $f^*TM^{\mathbb{C}}.$
\begin{lem}
 Fix a point $(p,\mu,\nu)\in \tilde{\Sigma}\times \M$. Suppose that for every $W\in \mathbf{E}_p$, there exists a variation $\dot{\nu}\in T_{\nu}\M(M)$ such that if $V\in\Gamma(\mathbf{F})$ is the section satisfying $\mathbf{J}V = \mathcal{G}(\dot{\nu})$, then $V(p)=0$ and  $$\sigma_p(\nabla_z V(p)) = W(p).$$ Then $\Psi$ is a submersion at $(p,\mu,\nu)$. Consequently, $\Psi$ is transverse to $\mathcal{L}_0$ and $\mathcal{L}_1$ at $(p,\mu,\nu)$.
\end{lem}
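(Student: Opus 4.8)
The plan is to read the statement off the derivative formula just recorded. Transversality of $\Psi$ to a submanifold $N\subset\mathbb{C}^n$ at a point $(p,\mu,\nu)\in\Psi^{-1}(N)$ means $\im d\Psi_{(p,\mu,\nu)}+T_{\Psi(p,\mu,\nu)}N=T_{\Psi(p,\mu,\nu)}\mathbb{C}^n=\mathbb{C}^n$, and since $\mathcal{L}_0$ and $\mathcal{L}_1$ are both submanifolds it is enough to prove the single stronger assertion that $\Psi$ is a submersion at $(p,\mu,\nu)$, i.e.\ that $d\Psi_{(p,\mu,\nu)}$ alone maps onto $\mathbb{C}^n$. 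Restricting $d\Psi$ to the directions $(0,0,\dot{\nu})$ that fix the source metric, the computation above gives $d\Psi(0,0,\dot{\nu})=\sigma_p(\nabla_z V(p))$ once the harmonic variation $V$ solving $\mathbf{J}V=\mathcal{G}(\dot{\nu})$ is normalized so that $V(p)=0$. As $\sigma_p\colon\mathbf{E}_p\to\mathbb{C}^n$ is an isomorphism, surjectivity of $\dot{\nu}\mapsto d\Psi(0,0,\dot{\nu})$ is equivalent to saying that $\nabla_z V(p)$ can be made to equal any prescribed vector of $\mathbf{E}_p$ by a $\dot{\nu}$ with $V(p)=0$ --- which is exactly the hypothesis. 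Hence $d\Psi_{(p,\mu,\nu)}$ is onto and the conclusion follows.

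The step that needs care is the justification of $d\Psi(0,0,\dot{\nu})=\sigma_p(\nabla_z V(p))$ under the normalization $V(p)=0$, in particular why the connection corrections and the $p$-dependence of the trivialization $\sigma$ drop out. By the Eells--Lemaire result quoted in the introduction, $(\mu,\nu)\mapsto f_{\mu,\nu}$ is $C^k$ with $k\geq 1$, so $t\mapsto f_{\mu,\nu+t\dot{\nu}}$ is a $C^1$ family, $\Psi$ is differentiable, and $V=\tfrac{d}{dt}\big|_{t=0}f_{\mu,\nu+t\dot{\nu}}$; thus $V(p)=\tfrac{d}{dt}\big|_{t=0}f_{\mu,\nu+t\dot{\nu}}(p)$, so when $V(p)=0$ the base point $f_{\mu,\nu+t\dot{\nu}}(p)$ is stationary to first order in $t$. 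Differentiating $\sigma\big(f_{\mu,\nu+t\dot{\nu}}(p),\,(f_{\mu,\nu+t\dot{\nu}})_z(p)\big)$ in $t$ then only sees the fibrewise-linear action of $\sigma_p$ applied to $\tfrac{d}{dt}\big|_{t=0}(f_{\mu,\nu+t\dot{\nu}})_z(p)$; writing $(\cdot)_z=\tfrac12\big((\cdot)_x-i(\cdot)_y\big)$ and commuting $\partial_t$ with $\partial_x,\partial_y$ in a coordinate trivialization, that derivative equals $\nabla_z V(p)$ up to Christoffel terms $\Gamma(f_x,V),\Gamma(f_y,V)$ evaluated at $p$, which vanish precisely because $V(p)=0$. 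Since only $\nu$ is varied, the canonical parameter $z=z_\mu$ is unchanged, so nothing enters from the source side. (Note that at a point of $\mathcal{L}_0$, a branch point, the base-point term vanishes anyway because $f_z(p)=0$; the normalization is what controls the $\mathcal{L}_1$ case.)

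I expect the main obstacle to be exactly this bookkeeping --- tracking which terms the normalization $V(p)=0$ is responsible for killing --- rather than anything conceptual; once the formula is in hand, the passage to submersivity and hence to transversality with $\mathcal{L}_0$, $\mathcal{L}_1$ (and with any submanifold of $\mathbb{C}^n$) is formal. The real work of Theorem B then lies in the sequel, namely verifying the hypothesis of this lemma at points $(p,\mu,\nu)$ with $f_{\mu,\nu}$ somewhere injective and having isolated singularities, which is where the reproducing kernels of Section 3 and a further contradiction argument enter.
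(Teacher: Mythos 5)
Your proposal is correct and is essentially the argument the paper intends: the lemma is stated as an immediate consequence of the derivative formula $d\Psi(0,0,\dot{\nu})=\sigma_p(\nabla_z V(p))$ under the normalization $V(p)=0$, and the hypothesis then says exactly that this restricted derivative surjects onto $\mathbb{C}^n$ via the isomorphism $\sigma_p$, so $\Psi$ is a submersion and hence transverse to any submanifold of $\mathbb{C}^n$. Your additional bookkeeping (why the base-point and Christoffel terms vanish when $V(p)=0$, and why the source parameter $z_\mu$ is unaffected) fills in details the paper leaves implicit but does not change the route.
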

Granting the following, we prove Theorem B.
\begin{lem}\label{trans1}
Suppose $f_{\mu,\nu}$ is somewhere injective and has isolated singularities. Then the hypothesis of the lemma above is satisfied. Hence, $\Psi$ is transverse to $\mathcal{L}_0$ and $\mathcal{L}_1$ at $(p,\mu,\nu)$.
\end{lem}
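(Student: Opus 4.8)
The plan is to follow the template of the proof of Lemma~\ref{theta}: assume the conclusion fails, produce from duality a nonzero ``dual'' section $X$ on $\Sigma\setminus\{p\}$ that is orthogonal to $\mathbf{J}V$ for every harmonic variation $V$, and derive a contradiction — but now the contradiction will come from a Riemann--Roch obstruction for a holomorphic line bundle, not from the nonexistence of a global Jacobi field. \textbf{Step 1 (reduction to a section $X$).} Suppose the hypothesis of the preceding lemma fails at $(p,\mu,\nu)$, i.e.\ the linear map sending $\dot\nu\in T_\nu\M(M)$ to $(V(p),\nabla_zV(p))\in\mathbf{F}_p\oplus\mathbf{E}_p$, where $V\in\Gamma(\mathbf{F})$ solves $\mathbf{J}V=\mathcal{G}(\dot\nu)$, does not have $\{0\}\oplus\mathbf{E}_p$ in its image. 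By finite--dimensional duality there are $U_0\in\mathbf{F}_p$ and $U_1\in\mathbf{E}_p$ with $U_1\neq0$ such that $\textrm{Re}\langle V(p),U_0\rangle+\textrm{Re}\langle\nabla_zV(p),U_1\rangle=0$ for all such $V$. Adding the zeroth--order reproducing kernel for $W\mapsto\textrm{Re}\langle W(p),U_0\rangle$ and the first--order kernels for $W\mapsto\textrm{Re}\langle\nabla_zW(p),U_1\rangle$ from Section~3 produces a section $X:\Sigma\setminus\{p\}\to\mathbf{F}$ with $\int_\Sigma\langle\mathbf{J}W,X\rangle\,dA=\textrm{Re}\langle W(p),U_0\rangle+\textrm{Re}\langle\nabla_zW(p),U_1\rangle$ for all $W\in\Gamma(\mathbf{F})$. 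Hence $X\not\equiv0$, $X\in L^q(\mathbf{F})$ for $q<2$, $\mathbf{J}X=0$ on $\Sigma\setminus\{p\}$ (self--adjointness plus the Weyl lemma), $\int_\Sigma\langle\mathbf{J}V,X\rangle\,dA=0$ for every harmonic variation $V$, and by Proposition~\ref{zeroex}, in the uniformizing coordinate $z$ near $p$ one has $X(z)=\frac{c}{z}U_1+O(\log|z|^{-1})$, so $X$ has at worst a simple pole at $p$, with polar direction a multiple of $U_1$.

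\textbf{Step 2 (local structure from somewhere injectivity).} Since $f=f_{\mu,\nu}$ is somewhere injective, choose a regular, injectively--mapped open disk $\Omega_0\subset\Sigma$ with $f(\Omega_0)$ disjoint from $f(\Sigma\setminus\Omega_0)$. For $\dot\nu$ supported in a thin tube about $f(\Omega_0)$, the section $\mathcal{G}(\dot\nu)$, and hence $\mathbf{J}V$, is supported in $\Omega_0$, so $\int_{\Omega_0}\langle\mathcal{G}(\dot\nu),X\rangle\,dA=0$ for all such $\dot\nu$. Substituting the coordinate formula for $\langle\mathcal{G}(\dot\nu),X\rangle$ from Section~5 and letting the $1$--jet of $\dot\nu$ along $f(\Omega_0)$ vary freely, this orthogonality pins down $X|_{\Omega_0}$: invoking the lemma of Moore~\cite{M1} together with the Koszul--Malgrange holomorphic structure on $\mathbf{E}$, one gets that $X|_{\Omega_0}$ is the real part of a holomorphic section of the distinguished holomorphic line subbundle $\mathbf{L}\subset\mathbf{E}$ generated by $f_z$ (holomorphically filled in across the isolated zeros of $f_z$).

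\textbf{Step 3 (globalization and the degree contradiction).} Because the singular set of $f$ is finite, $\mathbf{L}$ is a holomorphic line bundle over all of $\Sigma_\mu$, with $\deg\mathbf{L}=b-(2g-2)$, where $b\geq0$ is the total branching order of $f$ (read off from the holomorphic section $f_z\,dz$ of $\mathbf{L}\otimes K$, $K$ the canonical bundle). Since $X$ is a real--analytic Jacobi field on the connected surface $\Sigma\setminus\{p\}$ and lies in $\textrm{Re}(\mathbf{L})$ on the open set $\Omega_0$, it lies in $\textrm{Re}(\mathbf{L})$ on all of $\Sigma\setminus\{p\}$; recovering the unique section $Z$ of $\mathbf{L}$ with $\textrm{Re}(Z)=X$, the equation $\mathbf{J}X=0$ forces $Z$ to be holomorphic there, and by Step~1 together with the isolated--singularity hypothesis $Z$ extends to a global meromorphic section of $\mathbf{L}$ whose only pole is a simple pole at $p$ (with, in addition, forced vanishing at the remaining branch points accounting for the branching). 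Comparing $\deg\mathbf{L}$ with the degree of the divisor of $Z$ — at least $-1$ plus the forced zeros — gives a strict inequality (equivalently, for the relevant range of $\dim M$, Riemann--Roch forces the corresponding twist of $\mathbf{L}$ to have no nonzero sections), contradicting $Z\not\equiv0$. This proves the hypothesis of the preceding lemma, hence Lemma~\ref{trans1}.

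I expect the main obstacle to be in Steps~2--3: applying Moore's lemma to identify $X|_{\Omega_0}$ with $\textrm{Re}$ of a holomorphic section of exactly the right line bundle, showing that ``$X\in\textrm{Re}(\mathbf{L})$'' propagates off $\Omega_0$ and that the recovered $Z$ is single--valued and meromorphic with a controlled pole, and — most delicately — doing the divisor/degree bookkeeping at the branch points precisely enough that the count closes in the branched case (the immersed case $b=0$ being immediate, since then $\deg\mathbf{L}=2-2g\leq-2<-1$).
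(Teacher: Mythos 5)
Your proposal is correct and follows essentially the same route as the paper: dualize the failure of transversality to produce a kernel $X$ with at worst a simple pole at $p$, use somewhere injectivity plus Moore's lemma to realize $X$ locally as $\mathrm{Re}$ of a holomorphic section of $\mathbf{L}$, analytically continue (the paper does the propagation step you flag via Hartman--Wintner rather than literal real-analyticity, since the metrics are only $C^{r,\alpha}$), and close with exactly the degree count $2-2g+\sum_j k_j=\deg\mathbf{L}\geq -1+\sum_j k_j$, contradicting $g\geq 2$.
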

\begin{proof}[Proof of Theorem B]
Transversality is an open property, so we can fix a neighbourhood $U$ around $(\mu,\nu)$ in which $\Psi$ is transverse. We let $U^I$ denote the subset of $(\mu,\nu)\in U$ corresponding to harmonic immersions. Observe $U^I = U\cap\M\backslash(\pi(\Psi^{-1}(\mathcal{I})))$. The goal is to show this is open, dense, and connected.

$\mathcal{I}$ is clearly closed, from which the openness result is immediate. For density and connectedness, we use Proposition \ref{targ} with $A=U^I$, $X=\tilde{\Sigma}$, $Y=\mathbb{C}^n$, $f=\Psi$ and $W$ being both $\mathcal{L}_0$ and $\mathcal{L}_1$. Note that we computed the derivative of $\Psi$ above, so it is clearly $C^1$ (in fact, it is $C^m$, but we don't need to prove this). $d=\dim X=2$ and $k=\textrm{codim}_{\mathbb{C}^n}W=2n$ and $n-1$ for $W=\mathcal{L}_0$ and $\mathcal{L}_1$ respectively. Hence, if $n=\dim M \geq 4$, then in both cases $d-k\leq 2-3=-1,$ so $U^I$ is dense. If $n=\dim M\geq 5,$ then $d-k\leq -2,$ so $U^I$ is connected.
\end{proof}
We now explain Theorem C. Define $$\Phi: \Sigma^2\times \M \to M^2$$ by $$(p,q,\mu,\nu)\mapsto (f_{\mu,\nu}(p),f_{\mu,\nu}(q)),$$ and let $\mathcal{E}$ be the diagonal $$\mathcal{E}=\{(x,x): x\in M\}\subset M^2.$$ Similar to before, the bulk of the proof consists of showing that $\Phi$ is transverse to $\mathcal{E}$ at certain points. The derivative in a $\dot{\nu}$ direction is just $(V(p),V(q))$, where $V$ is the associated harmonic variation (Section 5.1). The following lemma is the transversality criterion.
\begin{lem}
Fix points $(p,q,\mu,\nu)\in\Sigma^2\times \M$. Suppose that for every $W_1\in \mathbf{F}_p$, $W_2\in \mathbf{F}_q$, there exists a variation $\dot{\nu}\in T_{\nu}\M(M)$ such that if $V\in\Gamma(\mathbf{F})$ is the section satisfying $\mathbf{J}V = \mathcal{G}(\dot{\nu})$, then $(V(p),V(q))=(W_1,W_2)$. Then $\Psi$ is a submersion at $(p,q,\mu,\nu)$. Consequently, $\Psi$ is transverse to $\mathcal{E}$ at $(p,q,\mu,\nu)$.
\end{lem}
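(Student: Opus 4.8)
The plan is to observe that the stated hypothesis is in fact much stronger than transversality to $\mathcal{E}$: it already makes $\Phi$ a submersion at $(p,q,\mu,\nu)$, and a submersion is automatically transverse to every submanifold of the target, in particular to $\mathcal{E}$. (I read the conclusion as concerning $\Phi$ rather than $\Psi$, since $\mathcal{E}\subset M\times M$ while $\Psi$ has target $\mathbb{C}^n$.)

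To carry this out, first dispose of the trivial case: if $\Phi(p,q,\mu,\nu)\notin\mathcal{E}$ then transversality at that point holds vacuously, so assume $f(p)=f(q)=:x$ with $f=f_{\mu,\nu}$. Next identify tangent spaces. We have the splitting $T_{(p,q,\mu,\nu)}(\Sigma^2\times\M)=T_p\Sigma\oplus T_q\Sigma\oplus T_\mu\M(\Sigma)\oplus T_\nu\M(M)$, and $T_{(x,x)}(M\times M)=T_xM\oplus T_xM$, which under the canonical identifications $\mathbf{F}_p=(f^*TM)_p=T_xM=(f^*TM)_q=\mathbf{F}_q$ is $\mathbf{F}_p\oplus\mathbf{F}_q$. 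Restricting $d\Phi$ to the last summand $T_\nu\M(M)$ and using the derivative computation recorded above (the same as in Section 5.1), for $\dot{\nu}\in T_\nu\M(M)$ one has $d\Phi(0,0,0,\dot{\nu})=(V(p),V(q))$, where $V\in\Gamma(\mathbf{F})$ is the unique section with $\mathbf{J}V=\mathcal{G}(\dot{\nu})$; uniqueness holds because the technical assumption forces $\mathbf{J}$ to be an isomorphism. Finally invoke the hypothesis: as $\dot{\nu}$ ranges over $T_\nu\M(M)$, the vector $(V(p),V(q))$ sweeps out all of $\mathbf{F}_p\oplus\mathbf{F}_q=T_{(x,x)}(M\times M)$. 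Hence $d\Phi$ is surjective onto the target tangent space already along the $\M(M)$-directions, so $\Phi$ is a submersion at $(p,q,\mu,\nu)$, and in particular $d\Phi\big(T_{(p,q,\mu,\nu)}(\Sigma^2\times\M)\big)+T_{(x,x)}\mathcal{E}=T_{(x,x)}(M\times M)$, which is the desired transversality.

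There is no real obstacle here: all the genuine content of the embedding argument is hidden in the hypothesis, which will be established (for $f_{\mu,\nu}$ somewhere injective with isolated singularities) by the Theorem C analogue of Lemma \ref{trans1}. The only places one should be slightly careful are (i) that the $\mathcal{E}$-directions play no role — the target-metric variations alone already fill the whole tangent space $T_{(x,x)}(M\times M)$ — and (ii) the bookkeeping identifying $\mathbf{F}_p\oplus\mathbf{F}_q$ with $T_{(x,x)}(M\times M)$ at a self-intersection point $f(p)=f(q)$.
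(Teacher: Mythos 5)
Your proposal is correct and matches the paper's treatment: the paper records that $d\Phi(0,0,0,\dot{\nu})=(V(p),V(q))$ and presents the lemma as an immediate consequence (the hypothesis makes $d\Phi$ surjective along the target-metric directions alone, hence $\Phi$ is a submersion and in particular transverse to $\mathcal{E}$), which is exactly your argument. You are also right that the "$\Psi$" in the statement is a typo for $\Phi$; the only point you leave implicit is the splitting condition in the Banach-manifold definition of transversality, which holds automatically since $\mathcal{E}$ has finite codimension.
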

As above, Theorem C follows from a lemma that we leave for later.
\begin{lem}\label{trans2}
Suppose $f_{\mu,\nu}$ is somewhere injective and has isolated singularities. Then the hypothesis of the lemma above is satisfied. Hence, $\Psi$ is transverse to $\mathcal{E}$ at $(p,\mu,\nu)$.
\end{lem}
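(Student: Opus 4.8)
\medskip

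\noindent\textbf{Proof proposal.} The plan is to rerun the contradiction argument behind Lemma~\ref{trans1} (the transversality criterion for Theorem~B), now carrying the \emph{two} marked points $p$ and $q$ instead of a single point.

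First I would negate the conclusion: if it fails, the linear map $T_\nu\M(M)\to\mathbf{F}_p\oplus\mathbf{F}_q$, $\dot\nu\mapsto(V(p),V(q))$, with $V\in\Gamma(\mathbf{F})$ the harmonic variation solving $\mathbf{J}V=\mathcal{G}(\dot\nu)$, is not surjective. Hence there are vectors $Z_1\in\mathbf{F}_p$, $Z_2\in\mathbf{F}_q$, not both zero, with
\begin{equation*}
  \langle V(p),Z_1\rangle+\langle V(q),Z_2\rangle=0
\end{equation*}
for every harmonic variation $V$. Adding the two zeroth order reproducing kernels of Section~3 attached to $(p,Z_1)$ and $(q,Z_2)$ produces a section $X:\Sigma\setminus\{p,q\}\to\mathbf{F}$ which is not identically zero (this is visible from the reproducing property, choosing $W$ with $W(p)=Z_1$, $W(q)=0$), lies in $L^{p'}(\mathbf{F})$ for every finite $p'$, satisfies $\mathbf{J}X\equiv0$ off $\{p,q\}$, has at worst a logarithmic singularity at each of $p$ and $q$ with leading coefficients $Z_1$ and $Z_2$ by the zeroth order kernel formula \eqref{log}, and satisfies $\int_\Sigma\langle\mathbf{J}W,X\rangle\,dA=\langle W(p),Z_1\rangle+\langle W(q),Z_2\rangle$ for all $W\in\Gamma(\mathbf{F})$; together with the annihilation condition this gives $\int_\Sigma\langle\mathbf{J}V,X\rangle\,dA=0$ for every harmonic variation $V$.

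The substance is then to upgrade $X$ to a global holomorphic-type object and reach a contradiction, following the proof of Theorem~B. Put the Koszul--Malgrange holomorphic structure on $\mathbf{E}=\mathbf{F}\otimes\mathbb{C}$ and let $\mathbf{L}\subset\mathbf{E}$ be the special holomorphic line subbundle attached to $f_z$, exactly as there (defined off the isolated singular set of $f$ and extended across it). Since $f_{\mu,\nu}$ is somewhere injective, there is a nonempty open set $\Omega$ of injective regular points; choosing $\dot\nu$ supported near $f(\Omega)$ forces $\mathbf{J}V=\mathcal{G}(\dot\nu)$ to be supported in $\Omega$, so $\int_\Omega\langle\mathbf{J}V,X\rangle\,dA=0$, and Moore's lemma \cite{M1} then identifies $X|_\Omega$ with the real part of a local holomorphic section $Z_\Omega$ of $\mathbf{L}$. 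Using that $f$ has only isolated singularities, I would analytically continue the imaginary part of $Z_\Omega$ over $\Sigma_\mu$ to obtain a global meromorphic section $Z$ of $\mathbf{L}$ with $X=\textrm{Re}(Z)$, whose only possible poles are at $p$ and $q$. Since $\textrm{Re}$ of a meromorphic section grows at worst polynomially while $X$ has genuine logarithmic growth at $p$ (resp.\ $q$) unless $Z_1=0$ (resp.\ $Z_2=0$), one is forced to $Z_1=Z_2=0$, hence $X\equiv0$, contradicting $(Z_1,Z_2)\neq(0,0)$. Equivalently --- and this is how the step is organized for Theorem~B --- one compares the order of the divisor of $Z$ with $\deg\mathbf{L}$ and invokes Riemann--Roch; for the present two-point version this degree bookkeeping must be adapted to the situation of two marked points.

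I expect the main obstacle to be the construction of the global section $Z$: one has to show that continuing the local harmonic conjugate around loops in $\Sigma_\mu\setminus(\{p,q\}\cup\mathrm{Sing}(f))$ produces no period or monodromy obstruction, so that $Z$ is single-valued, and to treat the local behavior at the isolated points of $\mathrm{Sing}(f)$ where $\mathbf{L}$ degenerates. By contrast, the reproducing-kernel setup, the use of somewhere injectivity, and the growth comparison at $p$ and $q$ are routine given Section~3 and Moore's lemma.
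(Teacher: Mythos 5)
Your proposal follows essentially the same route as the paper's (sketched) proof: negate the surjectivity criterion to get annihilating vectors at $p$ and $q$, sum the two zeroth order reproducing kernels to build $X$, apply Moore's lemma and the analytic-continuation machinery from the proof of Lemma \ref{trans1} to realize $X$ as the real part of a global meromorphic section $Z$ of $\mathbf{L}$, and then use the logarithmic asymptotic \eqref{log} to see that the singularities at $p$ and $q$ are removable, forcing a contradiction. The monodromy issue you flag is handled exactly as in the previous subsection (two continuations differ by a local holomorphic section of $\mathbf{L}$ with vanishing real part, hence vanish), so the argument is correct as proposed.
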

Assuming this lemma, the proof of Theorem C follows the same line as the proofs of Theorems A and B.
\begin{proof}[Proof of Theorem C]
    Similar to above, we fix a neighbourhood $U$ around $(\mu,\nu)$ in which $\Phi$ is transverse, and we let $U^E\subset U$ be the subset corresponding to harmonic embeddings. $U^E$ is open for basic reasons. To prove $U^E$ is dense for $n\geq 5$ and connected for $n\geq 6$, we use Proposition \ref{targ} with $A=U^E,$ $X=\Sigma^2,$ $Y=M^2,$ $f=\Phi,$ and $W=\mathcal{E}$. $d=\dim \Sigma^2=4$ and $k = \textrm{codim}_{M^2} \mathcal{E} = n.$ If $n=\dim M\geq 5,$ then $d-k\leq 4-5=-1,$ so $U^E$ is dense. If $n=\dim M\geq 6,$ then $d-k\leq -2,$ so $U^E$ is connected.
\end{proof} 
\end{subsection}

\begin{subsection}{The holomorphic line bundle $\mathbf{L}$} Working toward the lemmas, we introduce the line bundle $\mathbf{L}$. Here we follow the exposition of \cite[section 4.1]{M2}. Fix a pair $(\mu,\nu)\in \mathfrak{M}$ and let $f=f_{\mu,\nu}$ denote the associated harmonic map. As in Section $2$, if we take a local complex parameter $z=x+iy$, $f_z$ is a local holomorphic section of the bundle $\mathbf{E}$, which we recall is equipped with its Koszul-Malgrange holomorphic structure.

While $f_z$ is only locally defined, the zero set is independent of the choice of coordinate, and the projectivization $[f_z]$ is a well-defined holomorphic section of the projectivized bundle $\mathbb{P}(\mathbf{E})$. Analytically continuing to the zero set we obtain a well-defined global section $$[f_z]:\Sigma\to \mathbb{P}(\mathbf{E}).$$ This section defines a family of lines in $\mathbf{E}$, which patch together to form a holomorphic line bundle $\mathbf{L}\subset \mathbf{E}$, and $f_z$ may be naturally viewed as a local holomorphic section of $\mathbf{L}$. If $p$ is a branch point, we can choose a coordinate $z$ in which $z(p)=0$ and $$f_z=z^kg(z)$$ where $g$ is a local section of $\mathbf{L}$ such that $g(p)\neq 0$. The integer $k$ is called the branching order of $f$.

The $\mathbf{E}$-valued $(1,0)$-form $f_z dz$ is naturally a holomorphic section of the holomorphic vector bundle $\mathbf{L}\otimes \mathbf{K}$, where $\mathbf{K}$ is the canonical bundle. If $f$ branches at points $p_1,\dots, p_n$ with branching orders $k_{p_1},\dots, k_{p_n}$, then $f_zdz$ defines a nowhere vanishing holomorphic section of the bundle $$\mathbf{L}\otimes \mathbf{K}\otimes \zeta_{p_1}^{-k_{p_1}}\otimes \dots \otimes \zeta_{p_n}^{-k_{p_n}}$$ where $\zeta_{p_j}$ is the holomorphic point bundle at $p_j$. It follows that $$\mathbf{L}\simeq \mathbf{K}^*\otimes \zeta_{p_1}^{k_{p_1}}\otimes \dots \otimes \zeta_{p_n}^{k_{p_n}}.$$
The degree of $\mathbf{L}$ can then be computed by the evaluation of the first Chern class against the fundamental class of $\Sigma$: $$\deg L=\langle c_1(\mathbf{L}), [\Sigma]\rangle = 2-2g + \sum_p k_p,$$ where the sum is taken over the branch set.

\end{subsection}

\begin{subsection}{Prescribing harmonic variations for Lemma \ref{trans1}}
Lemma \ref{trans1} is a special case of the following stronger result.
\begin{lem}
Fix a local complex coordinate $z=x+iy$ near $p\in \Sigma$. Then, for any three vectors $Z_j\in \mathbf{F}_p$, $j=1,\dots, 3$, we can find $\dot{\nu}\in T_\nu\mathfrak{M}^*(M)$ such that 
$$V(p)=Z_1 \hspace{1mm} , \hspace{1mm} \nabla_x V(p)=Z_2 \hspace{1mm} , \hspace{1mm} \nabla_y V(p)=Z_3$$
where $\mathcal{G}(\dot{\nu})=\mathbf{J}V$.
\end{lem}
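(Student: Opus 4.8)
The plan is to prescribe the $1$-jet of the harmonic variation $V$ at the point $p$ by choosing a variation $\dot\nu$ of the target metric supported in a small ball $D_\epsilon$ around $f(p)$, and then to solve the equation $\mathbf{J}V=\mathcal{G}(\dot\nu)$ via the inversion established in Section 2 (Proposition on isomorphisms $\mathbf{J}:C^{2,\alpha}\to C^{0,\alpha}$, valid under the technical assumption), reading off $V(p)$, $\nabla_x V(p)$, $\nabla_y V(p)$. The underlying principle is duality: the values $V(p)$ and $(\nabla V)(p)$ are computed by pairing $\mathcal{G}(\dot\nu)$ against the zeroth and first order reproducing kernels constructed in Section 3, and the main task is to show that these pairings can be made to realize \emph{any} triple of vectors. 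Concretely, I would choose $n$ independent ``directions'' in the space of metric variations — for each index $\gamma$ a variation of the form $\dot\nu_{\alpha\beta}(u)=-2u_\gamma\chi^\epsilon(u)\varphi^{(\gamma)}_{\alpha\beta}$ as in Section 5.3, together with variations of higher polynomial order in the $u$-coordinates — so that, using formula (\ref{50}), $\mathcal{G}(\dot\nu)$ has a prescribed principal part near $f(p)$.

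The key computational step is to expand $\langle\mathcal{G}(\dot\nu),X\rangle$ where $X$ ranges over reproducing kernels. For the zeroth-order kernel $X$ associated to a vector $U\in\mathbf{F}_p$, Proposition \ref{zeroex} gives $X(z)=\tfrac{1}{\pi z}U(p)+B(z)$ in the case of the first-order kernel, and the $\log$ expression (\ref{log}) for the genuine zeroth-order kernel; pairing with $\mathcal{G}(\dot\nu)$ supported in a shrinking neighborhood and taking $\epsilon\to 0$ produces, up to controllable error, a linear functional of the leading coefficients of $\dot\nu$ evaluated against $U(p)$. Because $df(p)$ has rank two (we may assume $p$ is a regular point; if $p$ is singular the statement still needs the isolated-singularity input, but the lemma as stated only asks to realize the jet, and near a regular point $\mu^{ij}\tfrac{\partial f^\alpha}{\partial x_i}\tfrac{\partial f^\beta}{\partial x_j}$ has rank two as a symmetric form), the contractions $\sum\mu^{ij}\dot\Gamma_{\alpha\beta,\gamma}\tfrac{\partial f^\alpha}{\partial x_i}\tfrac{\partial f^\beta}{\partial x_j}$ span all of $\gamma=1,\dots,n$ as $\varphi_{\alpha\beta}$ varies. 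Thus the map
\[
\dot\nu\ \longmapsto\ \big(V(p),\,\nabla_xV(p),\,\nabla_yV(p)\big)\in \mathbf{F}_p^3
\]
has image that, after the $\epsilon\to 0$ normalization, surjects onto $\mathbf{F}_p^3$: one uses variations whose leading term is linear in $u$ to control $V(p)$, and variations whose leading term is quadratic (or which break the symmetry between $\partial_x$ and $\partial_y$) to independently control the two covariant derivatives, exploiting that the first-order kernels blow up like $1/|z|$ while the zeroth-order kernel blows up only like $\log|z|^{-1}$, so the three functionals are linearly independent.

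Concretely the argument runs: (1) fix the $n$ model variations realizing a basis of ``$V(p)$-directions'' via the identity $\langle V(p),U\rangle=\int\langle\mathcal{G}(\dot\nu),X_U\rangle\,dA$, showing the resulting linear map $\mathbf{R}^n\to\mathbf{F}_p$ is onto by the rank-two property of $df$; (2) repeat with the first-order kernels $\nabla_zX_U$ from Proposition \ref{zeroex}, using variations supported closer and closer to $f(p)$ and rescaled, to realize a basis of ``$\nabla_zV(p)$-directions,'' noting that the $\log|z|^{-1}$ contributions from step (1) are negligible against the $1/z$ singularity, hence the two batches are independent; (3) take real and imaginary parts of $\nabla_z$ to get $\nabla_x$ and $\nabla_y$; (4) assemble a single $\dot\nu$ as a linear combination — the total map into $\mathbf{F}_p^3\cong\mathbf{R}^{3n}$ is then surjective by construction, and by finite-dimensional linear algebra any target triple is attained by some $\dot\nu\in T_\nu\mathfrak{M}^*(M)$ (we only need a finite-dimensional subspace of variations, which certainly lies in the tangent space to the open set $\mathfrak{M}^*(M)$). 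The main obstacle is \textbf{step (2)}: one must verify that the first-order reproducing-kernel pairings genuinely produce \emph{all} of $\nabla_zV(p)$ and not merely a proper subspace — i.e.\ that the quadratic-in-$u$ variations are not accidentally annihilated by the contraction with $df\otimes df$ — and that the cross-terms between the $\log$ and $1/z$ singularities, as well as the $C^{0,\alpha}$ remainders $B(z)$, contribute only $O(1)$ (equivalently $o(\epsilon^{-k})$ at the relevant scale) so that the leading-order linear algebra is not spoiled. This is exactly the type of delicate asymptotic bookkeeping carried out in Sections 5.3–5.5, and I would model the estimates on those, choosing the support scales (analogous to the ``fat cylinder'' trick) so that the first-order term dominates.
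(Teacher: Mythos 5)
Your proposal attempts a direct, constructive proof of surjectivity by concentrating metric variations near $f(p)$ and reading off the leading asymptotics against the reproducing kernels. This is not the paper's route, and it has a genuine gap at exactly the point you flag as the ``main obstacle,'' but the problem appears already in your step (1), for the \emph{tangential} components of $V(p)$. The Section~5 computation only detects the normal components $X^\gamma$, $\gamma\geq 3$, of the kernel's singular coefficient: for a variation $\dot\nu_{\alpha\beta}=-2u_j\chi^\epsilon\varphi_{\alpha\beta}$ the term $\dot\Gamma_{ij,\gamma}$ concentrates like a two-dimensional delta on the image surface only when $\gamma$ is a normal index (restricting $\partial_j(u_j\chi^\epsilon)$ to $P=\{u_j=0\}$ gives $\chi^\epsilon$, total mass $1$). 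For a tangential index the analogous quantity is a total derivative along $P$ and integrates to zero against the surface measure, so the would-be leading term pairs trivially with the tangential part of $U(p)$ and you are left with the uncontrolled $O(1)$ remainders. This is not a bookkeeping issue to be fixed by fat cylinders: the tangential part of $\mathcal{G}(\dot\nu)$ is constrained in divergence form (this is what makes Moore's lemma work), so no purely local choice of $\varphi_{\alpha\beta}$ makes the three functionals span $\mathbf{F}_p^3$. A symptom of the gap is that your argument uses none of the standing hypotheses: somewhere injectivity, isolated singularities, and $g\geq 2$ all enter the paper's proof essentially, and the statement would not follow without them.

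The paper instead argues by duality and global holomorphic rigidity. Assuming the map $\dot\nu\mapsto(V(p),\nabla_xV(p),\nabla_yV(p))$ is not onto, it extracts an annihilating triple $(U_1,U_2,U_3)$, sums the corresponding zeroth and first order reproducing kernels into a section $X$ that is orthogonal to $\mathbf{J}V$ for every harmonic variation $V$, and then applies Moore's lemma on an injective patch (this is where somewhere injectivity is used) to realize $X$ as the real part of a holomorphic section of $\mathbf{L}$. The isolated singularity hypothesis allows analytic continuation of the imaginary part, producing a global meromorphic section of $\mathbf{L}$ with at most a simple pole at $p$; comparing the degree of its divisor with $\deg\mathbf{L}=2-2g+\sum_jk_j$ forces $g\leq 3/2$, a contradiction. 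If you want to salvage a direct approach, you would at minimum need to explain how to prescribe the tangential part of the jet, and any such explanation must see the genus and the global structure of $\mathbf{L}$; the local asymptotics alone cannot.
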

Suppose the lemma false, so that there are three vectors $Z_1,Z_2, Z_3\in \mathbf{F}_p$ such that the above fails for every $V$ of the form $\mathbf{J}V=\mathcal{G}(\dot{\nu})$, where $\dot{\nu}\in T_\nu\mathfrak{M}(M)$. Considering the induced inner product on $\oplus_1^3\mathbf{F}_p$, we can find a triplet of vectors $U_1,U_2, U_3\in \mathbf{F}_p$ (with not all of them equal to the zero vector) such that for every section $V\in \Gamma(\mathbf{F})$ such that $\mathbf{J}V=\mathcal{G}(\dot{\nu})$, $U_1$, $U_2,$ and $U_3$ are all orthogonal to $V(p)$. This yields the identity $$\langle V(p),U_1\rangle + \langle \nabla_x V(p), U_2\rangle + \langle \nabla_y V(p), U_3\rangle = 0$$ for every such $V$.

Adding together reproducing kernels, we obtain a smooth section $X:\Sigma\backslash\{p\}\to \mathbf{F}$ such that
\begin{equation}\label{notzero}
    \langle W(p),U_1\rangle + \langle \nabla_x W(p), U_2\rangle + \langle \nabla_y W(p), U_3\rangle =
\int_\Sigma \langle \mathbf{J}W, X\rangle dA
\end{equation}
for every $W\in \Gamma(\mathbf{F})$. Moreover, $\mathbf{J}X(X)=0$ for every $x\in \Sigma\backslash\{p\}$ and the growth of $X$ is controlled by $|z|^{-1}$ at $p$.
\begin{lem}
$X$ is not identically zero.
\end{lem}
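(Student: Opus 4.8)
The plan is to argue by contradiction using the fact that the functional on the left-hand side of (\ref{notzero}) is genuinely nonzero. Suppose $X$ vanishes almost everywhere on $\Sigma$. Since $\mathbf{J}W$ is bounded and $X\in L^1(\mathbf{F})$, the integral $\int_\Sigma\langle \mathbf{J}W,X\rangle\,dA$ is then $0$ for every $W\in\Gamma(\mathbf{F})$, so (\ref{notzero}) would give
$$\langle W(p),U_1\rangle + \langle \nabla_x W(p),U_2\rangle + \langle \nabla_y W(p),U_3\rangle = 0$$
for all admissible $W$. As $(U_1,U_2,U_3)$ is not the zero triple by construction, it suffices to exhibit a single $W$ violating this identity, which I would do by prescribing the covariant $1$-jet of $W$ at $p$.

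To produce such a $W$: work in the complex coordinate $z=x+iy$ with $z(p)=0$, fix a local frame $e_1,\dots,e_n$ of $\mathbf{F}$ that is synchronous at $p$ (so $(\nabla_x e_i)(p)=(\nabla_y e_i)(p)=0$; a parallel frame along radial geodesics works), and let $\chi$ be a smooth cutoff equal to $1$ near $p$ and supported in the coordinate chart. For constants $a_i,b_i,c_i$, set $W=\chi\sum_i (a_i+xb_i+yc_i)e_i$. Then $W$ has the regularity required of elements of $\Gamma(\mathbf{F})$, and by the synchronous property $W(p)=\sum_i a_ie_i(p)$, $\nabla_x W(p)=\sum_i b_ie_i(p)$, $\nabla_y W(p)=\sum_i c_ie_i(p)$, so the map $W\mapsto(W(p),\nabla_xW(p),\nabla_yW(p))\in\mathbf{F}_p^{\oplus 3}$ is surjective. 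Choosing $(a_i),(b_i),(c_i)$ so that this $1$-jet equals $(U_1,U_2,U_3)$ makes the left-hand side above equal $|U_1|^2+|U_2|^2+|U_3|^2>0$, contradicting the identity. Hence $X$ is not identically zero.

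I do not expect a real obstacle here; the only point requiring minor care is that the test section $W$ must lie in the same space $\Gamma(\mathbf{F})$ for which the reproducing identity (\ref{notzero}) was established (and the integral against $X$ must converge), both of which the explicit bump-times-affine construction above guarantees.
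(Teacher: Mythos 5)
Your argument is correct and is essentially the paper's own proof: the paper likewise notes that one can construct a section $W\in\Gamma(\mathbf{F})$ with prescribed $1$-jet at $p$ (e.g.\ matching $(U_1,U_2,U_3)$), making the left-hand side of (\ref{notzero}) positive and hence forcing $X\not\equiv 0$. Your explicit bump-times-affine construction in a synchronous frame just fills in the detail the paper calls an elementary exercise.
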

\begin{proof}
It is an elementary exercise to show that one can construct sections of $\mathbf{F}$ with prescribed $1$-jet at $p$ (and we used this fact already in Section $3$). So, one can choose $W$ such that the left-hand side of equation (\ref{notzero}) is positive.
\end{proof}

The following lemma, a very important piece of our argument, is the content of \cite[Lemma 3.1]{M1}. The argument can also be found in Moore's book \cite[page 311]{M2}.
\begin{lem}\label{Moorelemma}
Let $\Omega\subset \Sigma^{reg}(f)$ be a small open subset of the regular set of $f$, and assume $f=f_{\mu,\nu}$ satisfies $f^{-1}(f(\Omega))=\Omega$. Suppose $Y:\Omega\to \mathbf{F}$ is a smooth section. If $$\int_\Sigma \langle \mathbf{J}V, Y\rangle dA = 0$$ for every $\dot{\nu}\in T_\nu\mathfrak{M}(M)$ whose support is contained in $f(\Omega)$, then each point $p\in \Omega$ has a neighbourhood on which $Y$ equals the real part of a local holomorphic section of $\mathbf{L}$.
\end{lem}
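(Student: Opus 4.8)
The plan is to localize the hypothesis to $\Omega$, pass to coordinates adapted to the embedded disk $f(\Omega)$, and then read off directly that the orthogonality of $Y$ against all $\mathcal{G}(\dot\nu)$ forces $Y$ to be tangent to $f(\Omega)$ and to satisfy the Cauchy--Riemann equations in the holomorphic frame $f_z$ of $\mathbf{L}$. First I would shrink $\Omega$ (using $\Omega\subset\Sigma^{reg}(f)$ and $f^{-1}(f(\Omega))=\Omega$) so that $f|_{\overline\Omega}$ is an embedding, and fix a tubular neighbourhood $N\supset f(\Omega)$ in $M$ with $f^{-1}(N)=\Omega$; since $T_\nu\mathfrak{M}(M)$ consists of all symmetric $2$-tensors, every smooth $\dot{\nu}$ supported in $N$ is admissible. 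For such $\dot{\nu}$ the harmonic variation $V$ satisfies $\mathbf{J}V=\mathcal{G}(\dot{\nu})$, and formula (\ref{50}) exhibits $\mathcal{G}(\dot{\nu})$ pointwise as a linear function of $\dot{\nu}$ and its first derivatives along $f$, so $\operatorname{supp}\mathcal{G}(\dot{\nu})\subset\Omega$ and the hypothesis reduces to $\int_\Omega\langle\mathcal{G}(\dot{\nu}),Y\rangle\,dA=0$ for all $\dot{\nu}$ supported in $N$.

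Next I would choose, near a point of $\Omega$, a conformal coordinate $z=x+iy$ for $\mu$ together with coordinates $(u_1,\dots,u_n)$ on $N$ in which $f(\Omega)=\{u_3=\cdots=u_n=0\}$ and $f(x,y)=(x,y,0,\dots,0)$, so that $\partial f^\alpha/\partial x_i=\delta_{i\alpha}$ for $i,\alpha\le 2$. Writing $Y=\sum_\gamma Y^\gamma\,\partial/\partial u_\gamma$ and using $\mu^{ij}=\sigma^{-2}\delta^{ij}$ and $dA=\sigma^2\,dx\,dy$ (so the conformal factors cancel), formula (\ref{50}) becomes
\[
\int_\Omega\langle\mathcal{G}(\dot{\nu}),Y\rangle\,dA=\int_\Omega\sum_{\gamma=1}^n\bigl(\dot{\Gamma}_{11,\gamma}+\dot{\Gamma}_{22,\gamma}\bigr)Y^\gamma\,dx\,dy ,
\]
with $\dot{\Gamma}_{ii,\gamma}=\dot{\nu}_{i\gamma,i}-\tfrac12\dot{\nu}_{ii,\gamma}$ by (\ref{51}). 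The key input is that $\dot{\nu}_{\alpha\beta}|_{f(\Omega)}$ and the normal derivatives $\partial_{u_\gamma}\dot{\nu}_{\alpha\beta}|_{f(\Omega)}$ ($\gamma\ge 3$) can be prescribed freely and independently by a Taylor/cutoff construction. Taking $\dot{\nu}$ with only $\partial_{u_{\gamma_0}}\dot{\nu}_{11}$ nonzero makes the bracket equal $-\tfrac12\psi\,\delta_{\gamma\gamma_0}$ along $f(\Omega)$ for arbitrary $\psi\in C_c^\infty(\Omega)$, whence $Y^{\gamma_0}\equiv 0$; ranging over $\gamma_0=3,\dots,n$ shows $Y$ is tangent to $f(\Omega)$.

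With $Y=Y^1\partial_{u_1}+Y^2\partial_{u_2}$ I would then feed in tangential variations $\dot{\nu}_{11}=A$, $\dot{\nu}_{22}=B$, $\dot{\nu}_{12}=C$ depending only on $(u_1,u_2)$ (cut off away from $f(\Omega)$) with all other components zero; then only $\gamma=1,2$ contribute and $\dot{\Gamma}_{11,\gamma}+\dot{\Gamma}_{22,\gamma}$ is first order linear in $A,B,C$. Integrating by parts in $x,y$ and letting $A,B,C\in C_c^\infty(\Omega)$ vary yields exactly $\partial_x Y^1=\partial_y Y^2$ and $\partial_y Y^1=-\partial_x Y^2$, i.e.\ $Y^1+iY^2$ is holomorphic in $z$. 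Finally, along $f(\Omega)$ one has $f_z=\tfrac12(\partial_{u_1}-i\partial_{u_2})$, a nowhere-vanishing holomorphic frame of $\mathbf{L}$ since $\nabla_{\overline z}f_z=0$; hence $2(Y^1+iY^2)f_z$ is a local holomorphic section of $\mathbf{L}$ with real part $Y^1\partial_{u_1}+Y^2\partial_{u_2}=Y$, which is the conclusion.

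The hard part will be the coordinate bookkeeping in the two middle steps: checking that the two chosen families of metric variations surject, respectively, onto an arbitrary multiple of each normal component $Y^\gamma$ ($\gamma\ge 3$) and onto the full collection of first-order test functionals detecting the $\overline\partial$-equation for $(Y^1,Y^2)$, without the two families interfering with one another. One also needs to be careful in identifying $f_z$ with $\tfrac12(\partial_{u_1}-i\partial_{u_2})$ on $f(\Omega)$ and in recognizing that the holomorphic structure $\mathbf{L}$ inherits from $\mathbf{E}$ is the trivial one in the frame $f_z$.
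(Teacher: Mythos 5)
Your plan is essentially the paper's own argument, which is only sketched there (deferring to Moore's Lemma 3.1 in \cite{M1}): localize the identity to $\Omega$ using $f^{-1}(f(\Omega))=\Omega$, kill the normal components of $Y$ with variations that vanish along $f(\Omega)$ but have prescribed normal derivative, and then extract the Cauchy--Riemann equations for the coefficient of $f_z$ from tangential variations and integration by parts. The one point of caution --- precisely the ``coordinate bookkeeping'' you flag --- is that in the tangential step $\dot{\nu}$ no longer vanishes along $f(\Omega)$ and your adapted coordinates are not normal coordinates for $\nu$ there, so the variation of the Christoffel symbols must be computed with covariant rather than partial derivatives of $\dot{\nu}$ (formula (\ref{51}) as written is only valid at the center of normal coordinates), and the resulting zeroth-order terms have to be tracked and shown to be consistent with the flat $\overline{\partial}$-equation for $Y^1+iY^2$ in the $\nabla^{0,1}$-parallel frame $f_z$.
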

\begin{remark}
The existence of such a set $\Omega$ is guaranteed by the hypothesis that $f$ is somewhere injective.
\end{remark}
Since the somewhere injective property is so strongly used, we give a word on the proof. 
\begin{proof}[Ideas in the proof]
The hypothesis that $f^{-1}(f(\Omega))=\Omega$ implies that if we take any variation $\dot{\nu}$ with support in $f(\Omega)$, then the support of $\mathbf{J}V=\mathcal{G}(\dot{\nu})$ is contained in $\Omega$, where $V$ is the associated harmonic variation. Therefore,
\begin{equation}\label{localint}
    \int_\Sigma \langle \mathbf{J}V, Y\rangle dA = \int_\Omega \langle \mathbf{J}V, Y\rangle dA.
\end{equation}
Choosing variations $\dot{\nu}$ normal to $f(\Omega)$, Moore uses (\ref{localint}) to show that $Y$ is a tangential section of $\mathbf{F}$ over $\Omega$, i.e., it maps into the image of $df(T\Sigma|_U)$ inside the pullback bundle $f^*TM$. Since $f$ is regular in $\Omega$, one can identify $\mathbf{F}|_\Omega$ with a real subbundle of $\mathbf{L}$. This will be explained after Proposition \ref{Xperp}. Then, choosing tangential variations, (\ref{localint}) is used to show that, under the identification, $Y$ is the real part of a holomorphic section of $\mathbf{L}$.
\end{proof}

Now we return to our main argument. Choose an open set $\Omega$ as above and not containing $p$ and apply Lemma \ref{Moorelemma} to the section $X$. Note that $X$ has no singularity in $\Omega$. Let $Z$ be the holomorphic section of $\mathbf{L}$ defined on $\Omega$ corresponding to $X$.

We use the isolated singularity condition to analytically continue $Z$. Let $U$ be any open subset of the regular set that intersects $\Omega$ with non-empty interior. We chose a conformal coordinate $z=x_1+ix_2$ on the source as well as coordinates on the target so that we could write $$X=X^j\frac{\partial}{\partial u_j}$$ with $\partial f^i/\partial x_j=\delta_{ij}u_i$. This identifies the first two components with the tangent bundle over $\Omega$, and we get an orthogonal splitting into tangential and normal components as $$f^*TU = (f^*TU)^T \oplus (f^*TU)^\perp.$$ $X|_\Omega$ is tangential, and hence the projection $\pi_{(f^*TU)^\perp}(X)$ vanishes on $\Omega$. We see via the next proposition that this holds on all of $U$.
\begin{prop}\label{Xperp}
$\pi_{(f^*TU)^\perp}(X) = 0$ on all of $U$. In other words, $X|_U$ is tangential to the image surface $f(U)$.
\end{prop}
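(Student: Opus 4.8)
The plan is to deduce the proposition from Moore's Lemma~\ref{Moorelemma} together with a density argument, rather than by analytically continuing $Z$ directly; the continuation of $Z$ to a global meromorphic section is then a separate matter, carried out once $X$ is known to be everywhere tangential. Write $f=f_{\mu,\nu}$. Three facts are used. First, $X$ solves $\mathbf{J}X=0$ on $\Sigma\setminus\{p\}$, so by the Weyl lemma it is as regular as $\mathbf{F}$ there; in particular $\pi_{(f^*TU)^\perp}(X)$ is a \emph{continuous} section of the normal bundle of the immersed surface $f(U)$ away from $p$. Second, $X$ was constructed so that $\int_\Sigma\langle\mathbf{J}V,X\rangle\,dA=0$ for every harmonic variation $V$ (this is the content of (\ref{notzero}) combined with the orthogonality relation on $1$--jets that precedes it). Third, since $f$ is somewhere injective and has isolated singularities, the Remark following the definition of somewhere injectivity --- an application of the Aronszajn theorem \cite{Ar} --- gives that the set $\mathcal{U}\subset\Sigma$ of injective points of $f$, i.e.\ those $q$ with $f^{-1}(f(q))=\{q\}$, is open and dense in $\Sigma$, hence dense in the open set $U$.

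Now fix a point $q\in(\mathcal{U}\cap U)\setminus\{p\}$. As in the proof that the nowhere injective locus is closed (Section~4), $q$ has an open neighbourhood consisting entirely of injective points of $f$; intersecting it with $U\setminus\{p\}$ yields an open set $V_q\ni q$ with $V_q\subset U\subset\Sigma^{reg}(f)$ and $f^{-1}(f(V_q))=V_q$. These are precisely the hypotheses of Moore's Lemma~\ref{Moorelemma} with $\Omega:=V_q$ and $Y:=X$: the integral hypothesis of that lemma, for variations $\dot\nu$ supported in $f(V_q)$, is the special case of the vanishing recorded above. The conclusion is that near every point of $V_q$ the section $X$ equals the real part of a local holomorphic section of $\mathbf{L}$. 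Since $\mathbf{L}\subset df(T\Sigma)\otimes\mathbb{C}$ over $\Sigma^{reg}(f)$, the real part of any section of $\mathbf{L}$ is tangential to $f(U)$; hence $\pi_{(f^*TU)^\perp}(X)=0$ on $V_q$, in particular at $q$.

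Letting $q$ range over the dense subset $(\mathcal{U}\cap U)\setminus\{p\}$ of $U$, the continuous section $\pi_{(f^*TU)^\perp}(X)$ vanishes on a dense subset of $U$, hence on all of $U$ (away from $p$, where $X$ itself need not be defined). This gives the proposition. The only point requiring real care is the verification that the auxiliary sets $V_q$ can simultaneously be taken inside the regular set, disjoint from the singularity of $X$, and saturated in the sense $f^{-1}(f(V_q))=V_q$; and it is worth stressing that the genericity input --- density of the injective points in $U$ --- is exactly where the isolated--singularity hypothesis of Lemma~\ref{trans1} enters, via Aronszajn unique continuation. No analytic estimates beyond those already assembled in Sections~2 and~3 are needed.
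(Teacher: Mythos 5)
Your argument is correct, but it is not the route the paper takes, so a comparison is in order. The paper's proof works entirely by elliptic unique continuation for $X$ itself: it fixes a disk $V\subset U$ overlapping $\Omega$ (where Moore's Lemma~\ref{Moorelemma} has already forced tangentiality), writes $\mathbf{J}X=0$ in a frame adapted to the splitting $(f^*TU)^T\oplus(f^*TU)^\perp$, derives the scalar inequality $|\Delta X^j|\lesssim|\nabla X|+|X|$, and invokes Hartman--Wintner/Aronszajn to propagate the vanishing of the normal components $X^j$, $j\geq 3$, from $V\cap\Omega$ to all of $V$, finishing by chaining disks. You instead apply Moore's lemma \emph{many times}: at each injective point $q$ of $f$ you manufacture a small saturated neighbourhood $V_q$ (saturation $f^{-1}(f(V_q))=V_q$ is automatic once every point of $V_q$ is injective), deduce tangentiality there, and conclude by density of injective points plus continuity of $\pi_{(f^*TU)^\perp}(X)$ away from $p$. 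Each step checks out: the global identity $\int_\Sigma\langle\mathbf{J}V,X\rangle\,dA=0$ restricts to variations supported near $f(V_q)$, and the real part of a section of $\mathbf{L}$ is indeed tangential since $\mathbf{L}$ is spanned by $f_z=\tfrac12(f_x-if_y)$. What your route buys is that it avoids the coordinate computation of $\mathbf{J}X$ and yields tangentiality on the whole regular set minus $\{p\}$ in one stroke; what it costs is that the analytic content is outsourced to the introductory Remark on density of injective points (itself an Aronszajn consequence that the paper asserts without proof, and the place where the isolated-singularity hypothesis enters for you), whereas the paper's version is self-contained modulo Hartman--Wintner. Be aware also that the very next step of the paper --- showing $\overline{\partial}Z=0$ on $U$ after extending $Z=\tau(X)$ --- still requires the Hartman--Wintner continuation argument, so your approach removes that machinery only from this proposition, not from the proof of Lemma~\ref{trans1} as a whole.
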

\begin{remark}
This is automatic when the metrics $(\mu,\nu)$ are real analytic (which implies $f$ is real analytic as well).
\end{remark}
\begin{proof}
We prove $\pi_{(f^*TU)^\perp}(X) = 0$ in an open disk $V\subset U$ that intersects $\Omega$. The proposition then follows from point-set considerations. 

Let $\Big \{\frac{\partial}{\partial f_1},\frac{\partial}{\partial f_2},\dots \frac{\partial}{\partial f_n}\Big \}$ be a trivialization for $f^*TV$ such that $\Big \{\frac{\partial}{\partial f_1},\frac{\partial}{\partial f_2}\Big \}$ and $\Big \{\frac{\partial}{\partial f_3},\dots \frac{\partial}{\partial f_n}\Big \}$ are frames for the tangential and normal subbundles respectively. In these frames, write $X=X^j\frac{\partial}{\partial f_j}$, so that $$\pi_{(f^*TU)^\perp}(X) = \sum_{j=3}^n X^j \frac{\partial}{\partial f_j}.$$ Let $p\in V$ and let $z=x+iy$ be a local complex parameter for $V$ with $z(p)=0$. In the coordinate, $\mathbf{J}X$ is expressed as 
\begin{align*}
    \mathbf{J}X &=  -\mu^{-1} \Big(\sum_{j=1}^n (X_{xx}^j+X_{yy}^j)\frac{\partial}{\partial f_j}\Big ) + \mu^{-1}\sum_{j=1}^n \Big (2X_x^j \nabla_x \partial f_j - 2X_y^j\nabla_y \frac{\partial}{\partial f_j}\Big ) \\ 
    &-\mu^{-1}\Big (\sum_{j=1}^n X^j(\nabla_x\nabla_x + \nabla_y \nabla_y) \frac{\partial}{\partial f_j}
    -\mu^{-1}\sum_{i,k,j,\ell}X^k ({}^\nu R_{\ell k i}^j\circ f)(f_x^\ell f_x^i+f_y^\ell f_y^\alpha)\frac{\partial}{\partial f_j}\Big ),
\end{align*}
where the ${}^\nu R_{\delta\beta\alpha}^\gamma$ are the coordinate expressions for the Riemannian curvature tensor of $(M,\nu)$. Since $\mathbf{J}X=0$ on $V$, we deduce that for all $j$, $$\Big | \Big (\frac{\partial^2}{\partial x^2} + \frac{\partial^2}{\partial y^2} \Big ) X^j \Big | \lesssim |\nabla X| + |X|,$$ where $\nabla$ is the ordinary Euclidean gradient in the local coordinates. We can now invoke the Hartman-Wintner theorem \cite{HW}, which asserts that, in our choice of coordinates, $$X(z) = p(z) + r(z),$$ where $p(z)$ is a vector-valued harmonic homogeneous polynomial, and $r(z)\in O(zp(z))$. It follows immediately that $X^j=0$ on $V$ for $j\geq 3$.
\end{proof}

Next, let $\gamma$ be any curve emanating from $\Omega$ that does not intersect the singular set (this includes $p$). In a neighbourhood $U$ containing the first intersection point of $\gamma\cap \Omega$, we continue to choose a conformal coordinate $z=x_1+x_2$ so that $\partial f^i/\partial x_j=\delta_{ij}u_i$. Then there is a real linear isomorphism $\tau: (f^*TU)^T\to \mathbf{L}|_U$ defined by
\begin{equation}\label{liniso}
    M\partial/\partial x_1 + N\partial/\partial x_2\mapsto (M+iN)f_z.
\end{equation}
In $\Omega\cap U$, the proof of Lemma \ref{Moorelemma} explicitly constructs the holomorphic section $Z$ as $$Z=\tau(X) = (X_1+iX_2)\frac{\partial}{\partial z}.$$ Extending $Z$ by this formula on all of $U$, it is easily checked that $Z$ is a Jacobi field if and only if $X$ is. Thus, arguing similarly to above, we see via Hartman-Wintner that in our coordinates, $$Z(z)=q(z)+s(z),$$ with $q$ a complex vector-valued harmonic homogeneous polynomial and $s(z)$ decaying faster. Differentiating, the local expression for the section $\overline{\partial}Z$ takes this form as well. Thus, since $\nabla_{\overline{z}}Z =\overline{\partial}Z$ vanishes on $\Omega$, it vanishes everywhere. That is, $Z$ is holomorphic on $U$. In this way, we continue along all of $\gamma$. The next lemma shows that the analytic continuation does not depend on the path. 
\begin{lem}
Let $W$ denote a local holomorphic section of $\mathbf{L}$. Then $\textrm{Re}(W)$ is not identically $0$.
\end{lem}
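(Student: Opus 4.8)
The plan is to prove the contrapositive: if $W$ is a local holomorphic section of $\mathbf{L}$ with $\textrm{Re}(W)\equiv 0$, then $W\equiv 0$ (the zero section of course has vanishing real part, so the statement is to be read for $W\not\equiv 0$). Say $W$ is defined on an open set $U\subset\Sigma$. The guiding observation is that at a point where $f$ is regular, the line $\mathbf{L}_p\subset\mathbf{E}_p$ meets the real subspace $\mathbf{F}_p$ only at the origin — indeed, it contains no nonzero vector with vanishing real part — so $\textrm{Re}(W)$ can vanish at such a point only if $W$ itself does.

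First I would record this pointwise linear‑algebra fact. Fix $p\in\Sigma^{reg}(f)\cap U$ and a local complex parameter $z=x+iy$ near $p$. Regularity means $f_x(p),f_y(p)$ are linearly independent over $\R$ in $\mathbf{F}_p$; in particular $f_z(p)=\tfrac12(f_x(p)-if_y(p))\neq 0$, so $f_z(p)$ spans $\mathbf{L}_p$. Writing a general element of $\mathbf{L}_p$ as $\lambda f_z(p)$ with $\lambda=a+ib$, a one–line computation gives $\textrm{Re}\big(\lambda f_z(p)\big)=\tfrac12\big(a\,f_x(p)+b\,f_y(p)\big)$, which by linear independence of $f_x(p),f_y(p)$ is zero if and only if $\lambda=0$. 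Hence the only element of $\mathbf{L}_p$ with vanishing real part is $0$ (and in particular $\mathbf{L}_p\cap\mathbf{F}_p=\{0\}$).

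Then I would globalize. Assuming $\textrm{Re}(W)\equiv 0$ on $U$, the previous step forces $W(p)=0$ for every $p\in\Sigma^{reg}(f)\cap U$. Since the regular set of an admissible harmonic map is open and dense (Sampson, Theorem 3 in \cite{S}), $\Sigma^{reg}(f)\cap U$ is dense in $U$, so by continuity of $W$ we get $W\equiv 0$ on $U$. One could instead invoke the identity principle for holomorphic sections after checking that $W$ vanishes on one coordinate neighbourhood, but the density argument is more direct. This is precisely what the surrounding discussion needs: continuing $Z$ around a loop in $\Sigma$ minus the singular set produces another local holomorphic section of $\mathbf{L}$ with the same real part $X$, so the difference of the two continuations has zero real part, hence vanishes, and the analytic continuation is single‑valued.

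I do not expect any genuine obstacle here; the only point requiring care is that the pointwise argument must be run on $\Sigma^{reg}(f)$, where $f_z$ is a bona fide nonvanishing local frame for $\mathbf{L}$ — at branch points $f_z$ vanishes and the naive fibrewise statement degenerates — so the conclusion on $U$ is obtained from the \emph{density} of $\Sigma^{reg}(f)$ rather than pointwise at every point of $U$.
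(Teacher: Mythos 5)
Your proposal is correct and is essentially the paper's own argument: both rest on the computation $\mathrm{Re}(\lambda f_z)=\tfrac12(a f_x+b f_y)$ (the paper writes $W=hf_z$ and $\mathrm{Re}(W)=\tfrac12(h_1f_x+h_2f_y)$) together with the density of the regular set of $f$. The only difference is organizational — you argue the contrapositive pointwise on $\Sigma^{reg}(f)$ and conclude by density/continuity, while the paper derives $\rank(df)<2$ on $\Omega$ and contradicts density of regular points — which is not a substantive distinction.
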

Indeed, suppose we have two open sets $\Omega_1,\Omega_2\supset \Omega$ and local holomorphic extensions $X_1,X_2$ of $X$. Setting $W=X_1-X_2$, the lemma above forces $W\equiv 0$. Thus $Z$ extends in well-defined fashion to the complement of the singular set.
\begin{proof}
This is also found in \cite[Proposition 4.2]{Ma}, but we include the proof for completeness. In a local complex parameter $z=x+iy$, $W$ may be written $W=hf_z$ for some locally defined meromorphic function $h=h_1+ih_2$ (with possible poles matching up with zeros of $f$). Then $$\textrm{Re}(W)=\frac{1}{2}(h_1f_x+h_2f_y).$$ If $W$ is non-zero and $\textrm{Re}(W)\equiv 0$, then $f_x$ and $f_y$ are linearly dependent vectors, and moreover $\rank(df)<2$ on $\Omega$. This is impossible since, as remarked earlier, the set of regular points for $f$ is open and dense in $\Sigma$.
\end{proof}
We now address singular points.
\begin{lem}\label{extendsin}
$Z$ extends holomorphically over every singular point except possibly $p$.
\end{lem}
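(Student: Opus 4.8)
The plan is to work locally near a fixed singular point $q\neq p$ and to show that the section $Z$, which a priori is only defined on a punctured neighbourhood of $q$, is in fact holomorphic across $q$. Using the hypothesis that $f$ has isolated singularities, I would first choose a coordinate disk $(U,z)$ with $z(q)=0$ that contains no other singular point of $f$ and does not meet $\{p\}$. Then $U\setminus\{q\}\subset \Sigma^{reg}(f)\setminus\{p\}$, so by the preceding discussion $Z$ is defined and holomorphic on $U\setminus\{q\}$, with $X=2\,\textrm{Re}(Z)$ there (the precise normalising constant coming from $\tau$ is irrelevant). Writing $f_z=z^{k}g$ near $q$ with $g$ a local \emph{holomorphic} frame of $\mathbf{L}$ and $g(q)\neq 0$ (here $k\geq 0$ is the branching order), we have $Z=h\,g$ on $U\setminus\{q\}$ for some function $h$ holomorphic on the punctured disk; since $g$ extends holomorphically over $q$ and $g(q)\neq 0$, it suffices to show that $h$ extends holomorphically over $0$. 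Finally, since $X$ satisfies $\mathbf{J}X=0$ on $\Sigma\setminus\{p\}$ and lies in $L^{p}(\mathbf{F})$ for some $p>1$, elliptic regularity (the Weyl lemma) shows that $X$ is as regular as $\mathbf{F}$ there; in particular $X$ is bounded on a neighbourhood of $q$. Boundedness of $X$ near $q$ is the only property of $X$ that I will use.

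\emph{Ruling out a pole.} Suppose $h$ has a pole of order $m\geq 1$, say $h(z)=a_{-m}z^{-m}+\cdots$ with $a_{-m}\neq 0$. Using $g(z)=g(q)+O(z)$ and $2\,\textrm{Re}(w)=w+c(w)$, where $c$ is the (continuous) conjugation on $\mathbf{E}=\mathbf{F}\otimes\mathbb{C}$, one gets in polar coordinates $z=re^{i\theta}$
\[
X(re^{i\theta})=r^{-m}\bigl(a_{-m}e^{-im\theta}g(q)+\overline{a_{-m}}\,e^{im\theta}c(g(q))\bigr)+o(r^{-m}),
\]
uniformly in $\theta$. The $\mathbf{E}_q$-valued coefficient of $r^{-m}$ can vanish for some $\theta$ only if $c(g(q))$ is a unit-modulus scalar multiple of $g(q)$, and even then only for the finitely many $\theta$ solving $e^{-2im\theta}=\textrm{const}$. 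Hence along an open set of directions $\theta$ we have $|X(re^{i\theta})|\gtrsim r^{-m}\to\infty$ as $r\to 0$, contradicting boundedness of $X$ near $q$. So $h$ has no pole.

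\emph{Ruling out an essential singularity.} This is the step I expect to be the main obstacle. First note that the map $\lambda\mapsto \textrm{Re}(\lambda\,g(q))$ on the unit circle is not identically zero: otherwise, taking $\lambda=1$ and $\lambda=i$ we would get $g(q)\in\mathbf{F}_q\cap i\mathbf{F}_q=\{0\}$. So one may fix an argument $\phi_{0}$ and a $\delta>0$ with $|\textrm{Re}(\beta\,g(q))|\geq\delta|\beta|$ whenever $\arg\beta$ is close to $\phi_{0}$. Now if $h$ had an essential singularity at $0$, then by the Casorati--Weierstrass theorem there is, for any prescribed $\beta\in\mathbb{C}$, a sequence $z_{n}\to q$ with $h(z_{n})\to\beta$; choosing $\beta$ with $\arg\beta$ near $\phi_{0}$ and using continuity of $g$ and $c$ gives $X(z_{n})=2\,\textrm{Re}\bigl(h(z_{n})g(z_{n})\bigr)\to 2\,\textrm{Re}(\beta\,g(q))$, whence $\lim_{n}|X(z_{n})|=2|\textrm{Re}(\beta\,g(q))|\geq 2\delta|\beta|$. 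As $|\beta|$ may be taken arbitrarily large, this again contradicts boundedness of $X$ near $q$. Therefore $h$ has neither a pole nor an essential singularity, so $h$ -- and with it $Z=h\,g$ -- extends holomorphically over $q$. Running this at each singular point $q\neq p$ gives the lemma. No separate treatment is needed for ``degenerate'' branch points, where $c(g(q))$ is parallel to $g(q)$: both steps above are written to tolerate this, the only effect being that finitely many directions $\theta$, respectively arguments $\beta$, are excluded, which is harmless. (One could instead quote the Hartman--Wintner normal form for the smooth Jacobi field $X$ near $q$ to obtain the bound $|X|\lesssim |z|^{d}$ for an integer $d\geq 0$, and then match leading orders with the Laurent expansion of $2\,\textrm{Re}(h\,g)$; the argument above avoids even this and uses only that $X$ is bounded.)
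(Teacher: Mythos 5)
Your proof is correct, and it takes a genuinely different (and more elementary) route through the key steps, so a comparison is worthwhile. Both arguments rest on the same two inputs: $X$ is bounded near $q$ (it is a smooth solution of $\mathbf{J}X=0$ away from $p$), and near $q$ one may write $Z=h\cdot g$ with $g$ a non-vanishing local holomorphic frame of $\mathbf{L}$ (the paper's $f_z=z^kg$). The paper, however, first promotes $Z$ to a meromorphic section of $\mathbf{E}$ via an a priori growth bound ($|Z|$ blows up at worst like the inverse of the Jacobian of $f$), identifies $[Z]=[f_z]$ by the identity theorem on projectivizations, and then deduces removability by expressing $X$ in the real frame $\{\mathrm{Re}(g),\mathrm{Im}(g)\}$ and invoking boundedness of the coefficients. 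You bypass the meromorphic-extension step entirely: since $Z$ is already a holomorphic section of $\mathbf{L}$ on the punctured disk and $g$ is a frame of $\mathbf{L}$ across $q$, the quotient $h$ is a bona fide holomorphic function there, Casorati--Weierstrass excludes an essential singularity directly from the boundedness of $X=2\,\mathrm{Re}(hg)$, and the polar-coordinate leading-order expansion excludes a pole. Two points in your favour: your argument is self-contained (no separate growth estimate is needed), and it is more careful in the degenerate case where $\mathrm{Re}(g(q))$ and $\mathrm{Im}(g(q))$ are linearly dependent --- possible at a branch point of a non-conformal harmonic map --- where the paper's inference from ``$X$ bounded'' to ``the coefficients on $g_1$ and $g_2$ are bounded'' is not automatic; you handle this by discarding the finitely many bad directions $\theta$ (resp.\ arguments $\beta$), which is exactly the right fix. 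Nothing is lost for the sequel: the degree count that follows only needs $Z$ to be a meromorphic section of $\mathbf{L}$ with its sole possible pole at $p$, which your version delivers.
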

\begin{proof}
Let $\mathcal{S}$ be the singular set of $f$, so that we have a section $Y:\Sigma\backslash\mathcal{S}\to \mathbf{F}$ such that $Z=X+iY:\Sigma\backslash\mathcal{S}\to \mathbf{L}$ is holomorphic.

From the local coordinate expression for $Z$, the norm with respect to the natural metric on $\mathbf{F}$ blows up at singular points at worst like the inverse of the Jacobian of $f$. Thus, $Z$ extends to a meromorphic section of $\mathbf{E}$ on all of $\Sigma$. Taking the projectivization gives a well-defined holomorphic section $[Z]:\Sigma\to \mathbb{P}(\mathbf{E})$, which by the identity theorem must agree with $[f_z]$. That is, $Z$ is parallel to $f_z$, even at the singularities.

Let $q\neq p$ be a singularity. Choosing a local complex coordinate $z$ with $z(q)=0$ and any trivialization for our bundle, we can write $$Z=z^{n}g(z),$$ with $g$ holomorphic and parallel to $f_z$, $g(0)\neq 0$, and $n\in\mathbb{Z}$. Write $g=h(z)f_z$, with $h$ meromorphic. Let $g=g_1+ig_2$, $h^{-1}=h_1^{-1}+ih_2^{-1}$. Then $$f_z = \frac{1}{2}(f_x - if_y) =\frac{1}{2}\Big ( (h_1^{-1}g_1 + h_2^{-1}g_2) - i(h_2^{-1}g_1-h_1^{-1}g_2) \Big ),$$ and if $X=a(z)f_x+b(z)f_y$ away from $q$ (the coefficients may blow up at $q$), then we can also write
\begin{align*}
    X&= a(z)(h_1^{-1}g_1 + h_2^{-1}g_2)  + b(z)(h_2^{-1}g_1-h_1^{-1}g_2) \\
    &= (a(z)h_1^{-1}+b(z)h_2^{-1})g_1 + (a(z)h_2^{-1}-b(z)h_1^{-1})g_2.
\end{align*}
Since $X$ is regular and bounded at $q$, the coefficents on $g_1$ and $g_2$ are regular and bounded. This demonstrates that $X$ can be expressed as a real section in the trivialization determined by $g$. The same can be done for $Y$ off $q$. Since $X$ is bounded at $q$, it follows that the singularity of $Z$ is removeable.
\end{proof}

To obtain a contradiction and finish the proof of Lemma \ref{trans1}, we explain that no holomorphic section such as $Z$ can exist. From the proof of Lemma \ref{extendsin}, $Z$ behaves at worst like the asymptotic (\ref{overz}), so it extends to a globally defined meromorphic section $$Z:\Sigma\to \mathbf{L}$$ with a pole of order at most $1$ at $p$. We let $\textrm{ord}_q^{\mathbf{L}}(X)$, $\textrm{ord}_q^{\mathbf{E}}(X)$  denote the order of vanishing of $Z$ at $q$ with respect to the charts on the bundles $\mathbf{L}$ and $\mathbf{E}$ respectively. In this notation, $$\textrm{ord}_{p_j}^{\mathbf{L}}(Z)= \textrm{ord}_{p_j}^{\mathbf{E}}(Z)+k_{j}.$$ The degree of the divisor for $Z$ with respect to $\mathbf{L}$ agrees with the degree of $\mathbf{L}$, so that $$\deg \mathbf{L}= \sum_{q\in \Sigma} \textrm{ord}_q^{\mathbf{L}}(Z)= \sum_{q\in \Sigma} \textrm{ord}_q^{\mathbf{E}}(Z) + \sum_j k_j\geq -1 + \sum_j k_j.$$ Meanwhile, we showed earlier that $$\deg \mathbf{L}=2-2g+\sum_j k_j.$$ This implies $2-2g \geq -1$, or $g\leq 3/2$, and this contradiction establishes the result.

\end{subsection}

\begin{subsection}{Harmonic embeddings: the proof of Lemma \ref{trans2}} This is similar to Lemma \ref{trans1}, so we only sketch the proof. Lemma \ref{trans2} amounts to proving the following statement.
\begin{lem}
For any two vectors $Z_1\in \mathbf{F}_p$ and $Z_2\in \mathbf{F}_q$, we can find $\dot{\nu}\in T_\nu\mathfrak{M}^*(M)$ such that 
$$V(p)=Z_1 \hspace{1mm} , \hspace{1mm} V(q)=Z_2,$$
where $\mathcal{G}(\dot{\nu})=\mathbf{J}V$.
\end{lem}
If the lemma fails, there are vectors $U_1\in\mathbf{F}_p$, $U_2\in\mathbf{F}_q$ such that for every harmonic variation $V$, $$\langle U_1, V(p)\rangle + \langle U_2, V(q)\rangle =0.$$ Taking the reproducing kernels for $U_1$ and $U_2$, we have a section $X:\Sigma\backslash\{p,q\}\to \mathbf{F}$ such that $$\int_\Sigma \langle \mathbf{J}W, X \rangle dA = \langle U_1, W(p)\rangle + \langle U_2, W(q)\rangle$$ for all sections $W\in \Gamma(\mathbf{F})$. Invoking Moore's lemma and then repeating our argument from the previous subsection, one finds a section $Y:\Sigma\backslash\{p,q\}\to \mathbf{F}$ such that $Z=X+iY$ is holomorphic. The asymptotic (\ref{log}) ensures that $Z$ blows up strictly slower than any meromorphic section, and hence the singularities at $p$ and $q$ are removeable. Thus, $Z$ yields a globally defined holomorphic section, a Jacobi field, and this is a contradiction.
\end{subsection}

\begin{subsection}{Toward the Whitney theorems}
Theorems B and C show that Question E has a positive answer near harmonic surfaces that are somewhere injective and have isolated singularities. To conclude the paper, we discuss our use of this hypothesis and the possibility of removing it.

Firstly, to positively resolve Question E, by Theorem A it suffices to prove it is true near surfaces that are somewhere injective. So the only extra condition that we use here is the isolated singularities. Let us assume that $(\mu,\nu)$ are such that $f_{\mu,\nu}$ is somewhere injective, with no condition on singularities. Beginning the proof of Lemma \ref{trans1}, we find a kernel $X$ satisfying (\ref{notzero}). Then we can find an open set $\Omega$ on which $f$ is injective, and by Moore's lemma, $X|_\Omega$ is the real part of a holomorphic section $Z:\Omega\to \mathbf{L}$. At this point, it is tempting to believe that some sort of unique continuation argument should promote this to a global result.

It is unclear if this is possible, one obstruction being that we are not aware of a coordinate-free way to express that $X$ is the real part of a holomorphic section. The issue stems from the following remark: $X$ is a real section of $\mathbf{E}$ with respect to the real structure induced from the splitting $\mathbf{E}=\mathbf{F}\oplus i \mathbf{F}$, but there is no reason for a transition map to the holomorphic trivialization for the Koszul-Malgrange holomorphic structure to preserve this real structure. All we can say is that in such a trivialization, $$X(z) = K(z)X_0(z),$$ where $X_0(z)$ is a real section, and $K(z)$ is a smoothly varying family of complex matrices. Furthermore, the imaginary part $Y$ has been defined in terms of a particular local frame for $\mathbf{F}$. And the norm of the elements in this frame may explode as we approach singularities of $f$. In other words, we have no apriori uniform continuity for $Z$ in $\Omega$, and the imaginary part could blow up in an attempt to analytically continue.

While the holomorphic coordinates on $\mathbf{E}$ are opaque, we do have some understanding of what it means to be a holomorphic section of $\mathbf{L}$. This is what allows for some results under stronger hypothesis. In the end we want to find our contradiction by realizing $X$ as the real part of a global meromorphic section of $\mathbf{L}$ with constrained poles. Two steps:
\begin{enumerate}
    \item Show that when $f$ is regular, $X$ is tangential to $f$.
     \item Find mappings from the distribution $df(T\Sigma)\subset f^*TM$ to $\mathbf{L}$, under which $X$ corresponds to the real part of a meromorphic section (a meromorphic multiple of $f_z$).
\end{enumerate}
When $f$ is regular, we have well-defined splittings of $f^*TM$ into tangential and normal components for the image of $f$. Thus, if the set $A(f)$ from Section $4$ is connected, we can show (1) holds via a unique continuation argument (Proposition \ref{Xperp}). Note that the isolated singularity condition is really more than what we need for this to work. Once we have (1), we can define the section $Y$ at regular points as before, and again a connectedness assumption allows us to deduce that $Z=X+iY$ is holomorphic where defined.

The last challenge is to extend $Z$ over singular points. If the singularity is isolated, then $Z$ is meromorphic, and then we can argue using the boundedness of $X$ in the right choice of coordinates. But with a more complicated singular set--say, a general fold or a meeting point of general folds--controlling $Z$ becomes a delicate task. 

At this point, we see no direct geometric reason for the argument to fail in general. It is reasonable to expect that we can relax our assumptions to include harmonic maps with particular types of (non-isolated) singularities. It is unclear how far the method goes.
\end{subsection}

\end{section}

\appendix

\section{Transversality theorems}
We explain the transversality theory used in the proofs of Theorems A, B, and C. The main result is Proposition \ref{targ}. For more background on transversality theory and Banach manifolds in general, we refer the reader to \cite{ARflows}, \cite[Chapter 1]{M2}, and \cite{Smale}.

\begin{defn}
Let $X,Y$ be $C^1$ manifolds, $f:X\to Y$ a $C^1$ map, and $W\subset Y$ a submanifold. We say that $f$ is transverse to $W$ at a point $x\in X$ if $f(x)=y\not\in W$ or if $f(x)=y\in W$ and 
\begin{itemize}
    \item the inverse image $(df_x)^{-1}(T_yW)$ splits and 
    \item the image $df_x(T_xX)$ contains the closure of the complement of $T_yW$ in $T_yY$.
\end{itemize}
We say $f$ is transverse to $W$ if we have transversality for every $x\in X$.
\end{defn}
The central transversality theorem is below.
\begin{thm}[Transversality Theorem for Banach Manifolds] Let $X,Y$ be $C^r$ manifolds ($r\geq 1$), $f:X\to Y$ a $C^r$ map, and $W\subset Y$ a $C^r$ submanifold. Then if $f$ is transverse to $W$,
\begin{itemize}
    \item $f^{-1}(W)$ is a $C^r$ submanifold of $X$ and 
    \item if $W$ has finite codimension in $Y$, then $\codim_X f^{-1}(W)=\codim_Y W$.
\end{itemize}
\end{thm}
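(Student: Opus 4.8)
The plan is to reduce the statement to a purely local assertion and then invoke the implicit function theorem (submersion form) for Banach manifolds. Since being a $C^r$ submanifold and having a prescribed codimension are local conditions, and since a subset of a $C^r$ manifold which is locally a submanifold carries at most one compatible $C^r$ submanifold structure, it suffices to work near an arbitrary point $x_0 \in f^{-1}(W)$; write $y_0 = f(x_0)\in W$.

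First I would straighten out $W$ near $y_0$. By the definition of a $C^r$ submanifold in the Banach category, there is a chart $\psi\colon V \to F$ of $Y$ around $y_0$ (with $F$ a model space for $Y$) and a splitting $F = F_1 \oplus F_2$ into closed subspaces such that $\psi(W\cap V) = F_1 \cap \psi(V)$; by construction $\dim F_2 = \codim_Y W$. Let $\pi\colon F \to F_2$ be the projection killing $F_1$, and set $g = \pi \circ \psi \circ f$, a $C^r$ map from a neighbourhood of $x_0$ in $X$ into $F_2$. On that neighbourhood one has $f^{-1}(W) = g^{-1}(0)$.

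Next I would convert the transversality hypothesis at $x_0$ into a statement about $dg_{x_0}\colon T_{x_0}X \to F_2$. Because $\psi$ is a diffeomorphism and $\ker(d\pi) = F_1 = d\psi_{y_0}(T_{y_0}W)$, the requirement that $T_{x_0}f(T_{x_0}X)$ contain a (closed, hence complemented) complement of $T_{y_0}W$ in $T_{y_0}Y$ is exactly the surjectivity of $dg_{x_0}$; and $\ker(dg_{x_0}) = (T_{x_0}f)^{-1}(T_{y_0}W)$, which splits in $T_{x_0}X$ precisely by the first bullet in the definition of transversality. Thus $g$ is a $C^r$ submersion at $x_0$ with complemented kernel, so the implicit function theorem for Banach manifolds yields a chart of $X$ near $x_0$ carrying $g^{-1}(0)$ onto a slice modeled on $\ker(dg_{x_0})$. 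Hence $f^{-1}(W)$ is a $C^r$ submanifold near $x_0$; and if $\codim_Y W = \dim F_2$ is finite, the same chart shows $\codim_X f^{-1}(W) = \dim F_2 = \codim_Y W$. Patching over all of $f^{-1}(W)$ is automatic by the uniqueness remark above, giving the global conclusion.

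I expect the only genuine subtlety — and the reason the two splitting conditions are written into the definition of transversality — to be the following: in infinite dimensions the submersion theorem truly needs the kernel of the differential to be complemented, since not every closed subspace of a Banach space admits a closed complement. The first bullet supplies exactly this, and the second bullet (together with the fact that $T_{y_0}W$ itself splits, being the tangent space to a submanifold) supplies the surjectivity; once these are in place, everything else is the standard Banach-space chart bookkeeping carried out above.
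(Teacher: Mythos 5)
Your proof is correct, and it is the standard argument from Abraham--Robbin (the source the paper cites); the paper itself states this theorem in its appendix without proof, so there is no in-paper argument to compare against. Localizing, flattening $W$ by a submanifold chart $\psi$ with splitting $F=F_1\oplus F_2$, passing to $g=\pi\circ\psi\circ f$, and invoking the submersion form of the implicit function theorem (which in the Banach setting requires precisely the splitting of $\ker(dg_{x_0})=(T_{x_0}f)^{-1}(T_{y_0}W)$ supplied by the first bullet of the definition) is exactly how the cited proof goes. One small imprecision: the condition that $T_{x_0}f(T_{x_0}X)$ contain a closed complement of $T_{y_0}W$ is not \emph{equivalent} to surjectivity of $dg_{x_0}$ in infinite dimensions, it only implies it --- but that is the implication your argument actually uses, so nothing breaks. (You also silently, and correctly, read the paper's garbled phrase ``the closure of the complement'' as ``a closed complement,'' which is the intended Abraham--Robbin definition.)
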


A $C^1$ map between $C^1$ manifolds $f:X\to Y$ is Fredholm if for every $x\in X,$ the tangent map $df_x: T_xX \to T_{f(x)}Y$ is a Fredholm operator. If $X$ is connected, then the index, which does not depend on the point $x$, is $\textrm{index}(f)=\dim \ker (df_x)-\textrm{codim}_{T_{f(x)}Y}(df_x(T_xX)).$ 

\begin{thm}[Smale's Density Theorem] Let $X$ and $Y$ be $C^r$ manifolds with $X$ Lindel{\"o}f and $f:X\to Y$ a $C^r$ Fredholm map. Suppose that $r>\max\{0,\textrm{index}(f)\}$. Then the set of regular values of $f$ is residual.
\end{thm}

Recall that a subset of a topological space is residual if it is a countable intersection of dense open subsets. By the Baire Category theorem, residual sets are dense. Note that second countable implies Lindel{\"o}f.

The two stated results are sufficient to prove the result below that is used throughout the paper.
\begin{prop}\label{targ}
    Let $A,X,Y$ be connected $C^1$ Banach manifolds, and $f: A\times X\to Y$ a $C^1$ map. Let $W\subset Y$ be a $C^1$ submanifold. Assume that
    \begin{itemize}
        \item $X$ has finite dimension $d$ and $W$ has codimension $k$ in $Y$,
        \item $A$ and $X$ are second countable, and
        \item $f$ is transverse to $W$.
    \end{itemize}
    Let $\pi:A \times X\to A$ be the projection map and $Z=\pi(f^{-1}(W))$.
    \begin{itemize}
        \item If $d-k\leq -1,$ then $A\backslash Z$ is dense, and
        \item if $d-k\leq -2,$ then $A\backslash Z$ is connected.
    \end{itemize}
\end{prop}
In the proof of Proposition A.4, we essentially go through the argument used to prove the Parametric Transversality Theorem. For completeness, we state the theorem and give some explanation. Let $A,X,Y$ be $C^r$ manifolds and $\delta: A\to C^r(X,Y)$ a map. The evaluation map  $\beta:A \times X\to Y$ is defined by $\beta(a,x) = \delta(a)(x)$.

\begin{thm}[Parametric Transversality Theorem] Let $A,X,Y$ be $C^r$ manifolds and $\delta: A \to \mathcal{C}^r(X,Y)$ a map such that the evaluation map $\beta$ is $C^r$. Let $W\subset Y$ be a $C^r$ submanifold and $A_W$ the set of $a\in A$ such that $\delta(a)$ is transverse to $W$. Assume that
\begin{itemize}
    \item $X$ has finite dimension $d$ and $W$ has finite codimension $k$ in $Y$,
    \item $A$ and $X$ are second countable,
    \item $r>\max(0,d-k)$, and 
    \item the evaluation map is transverse to $W$.
\end{itemize}
Then $A_W$ is residual in $A$.
\end{thm}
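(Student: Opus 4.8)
The plan is to reduce the statement to the (already stated) Transversality Theorem for Banach Manifolds together with the Sard--Smale theorem, following the classical argument of Abraham--Robbin \cite{AbRa}. Write $\pi_A\colon A\times X\to A$ for the projection and set $Z=\beta^{-1}(W)\subset A\times X$. Since $\beta$ is transverse to $W$ by hypothesis, the Transversality Theorem shows that $Z$ is a $C^r$ submanifold of $A\times X$ of codimension $q$; moreover, since $A$ and $X$ are second countable, so is $Z$. The object to study is the restriction $\pi=\pi_A|_Z\colon Z\to A$, which is $C^r$.

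First I would check that $\pi$ is a Fredholm map of index $n-q$. At a point $(a,x)\in Z$ with $y=\beta(a,x)\in W$ one has $T_{(a,x)}Z=(d\beta_{(a,x)})^{-1}(T_yW)$, so the kernel of $d\pi_{(a,x)}$ is $\{0\}\times\{w\in T_xX:d\beta_{(a,x)}(0,w)\in T_yW\}$, a subspace of the finite-dimensional space $T_xX$; and a short diagram chase using transversality of $\beta$ identifies the cokernel of $d\pi_{(a,x)}$ with the finite-dimensional space $(T_yY/T_yW)\big/ q\bigl(d\beta_{(a,x)}(\{0\}\times T_xX)\bigr)$, where $q\colon T_yY\to T_yY/T_yW$ is the quotient, and shows that the image of $d\pi_{(a,x)}$ is closed (it is the preimage under $v\mapsto d\beta_{(a,x)}(v,0)$ of $T_yW+d\beta_{(a,x)}(\{0\}\times T_xX)$, the sum of a closed and a finite-dimensional subspace). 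Counting dimensions gives $\dim\ker d\pi_{(a,x)}-\dim\operatorname{coker}d\pi_{(a,x)}=n-q$. Finite-dimensional kernels automatically split, so $\pi$ is genuinely Fredholm of index $n-q$.

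Next I would identify $A_W$ with the set of regular values of $\pi$: this is the ``parametric'' linear-algebra lemma. Unwinding the computation above, for $(a,x)\in\pi^{-1}(a)$ the map $d\pi_{(a,x)}$ is surjective exactly when $q\bigl(d\beta_{(a,x)}(\{0\}\times T_xX)\bigr)=T_yY/T_yW$, i.e. when $d(\delta(a))_x(T_xX)+T_yW=T_yY$; since $T_xX$ is finite-dimensional, both splitting clauses in the definition of transversality are automatic here, so this is precisely the condition that $\delta(a)=\beta(a,\cdot)$ be transverse to $W$ at $x$. Ranging over $x\in\pi^{-1}(a)$ shows that $a$ is a regular value of $\pi$ if and only if $a\in A_W$. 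I expect the careful bookkeeping of the two transversality clauses in the Banach setting, together with the Fredholm estimates of the previous step, to be the main technical point; everything else is formal.

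Finally, $\pi\colon Z\to A$ is a $C^r$ Fredholm map of index $n-q$ between second countable Banach manifolds, and the hypothesis $r>\max(0,n-q)$ is exactly the hypothesis of the Sard--Smale theorem. That theorem then yields that the set of regular values of $\pi$ is residual in $A$, and by the previous paragraph this set equals $A_W$. Hence $A_W$ is residual, which is the assertion. The one genuinely deep ingredient is the Sard--Smale theorem itself, for which I would refer to \cite{AbRa}.
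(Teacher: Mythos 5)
Your argument is correct, and it is precisely the classical Abraham--Robbin proof: form the universal solution set $Z=\beta^{-1}(W)$, show the projection $Z\to A$ is Fredholm of index $n-q$ with regular values exactly $A_W$, and conclude by Sard--Smale. The paper itself offers no proof of this statement --- it is quoted in the appendix with a pointer to \cite{AbRa} --- so there is nothing to compare against beyond noting that your sketch reproduces the standard argument from that reference, including the two points that genuinely need checking (closedness of the image of $d\pi$, and the automatic splitting coming from finite-dimensionality of $T_xX$).
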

The Parametric Transversality Theorem is typically proved as follows. By transversality, $\beta^{-1}(W)$ is a submanifold of codimension $k$. The second countable assumption on $A$ and $X$ guarantees that $\beta^{-1}(W)$ is Lindel{\"o}f. If $\pi:A\times X\to A$ is the projection map, then one observes that the restricted map $\pi|_{\beta^{-1}(W)}:\beta^{-1}(W)\to A$ is Fredholm of index $d-k$ \cite[Lemma 19.2]{ARflows}. One then proves that $A_W$ is the residual set of $\pi|_{\beta^{-1}(W)}$ \cite[Lemma 19.4 and Lemma 19.5]{ARflows} and applies Smale's Density Theorem to $\pi|_{\beta^{-1}(W)}.$  

We also point out that parametric transversality recovers Smale's Density Theorem. Suppose that $f:A\to Y$ is a $C^r$ Fredholm map with $r>\max \{0,\textrm{index}(f)\}$. Setting $X=\{0\}$ and defining $\delta:A\to C^r(\{0\},Y)$ by $\delta(a)(0)=f(a)$, the Parametric Transversality Theorem applied to $\delta$ yields Smale's Density Theorem for $f$.  With all of this explained, we give the proof of Proposition \ref{targ}.
\begin{proof}[Proof of Proposition \ref{targ}]
    By the Transversality Theorem for Banach Manifolds, $f^{-1}(W)\subset A\times X$ is a submanifold of codimension $k$, which is Lindel{\"o}f by our second countable assumption on $A$ and $X$. The projection map $\pi|_{f^{-1}(W)}: f^{-1}(W)\to A$ is $C^1$ and Fredholm of index $d-k$ \cite[Lemma 19.2]{ARflows}. If the index of $\pi|_{f^{-1}(W)}$ is negative, $a\in A$ is a regular value if and only if $a\in A\backslash Z$. Assuming $d-k\leq -1,$ it follows from Smale's Density Theorem that $A\backslash Z$ is dense. 

    Now assume $d-k\leq -2.$ Let $\gamma:[0,1]\to A$ be any $C^1$ path with endpoints in $A\backslash Z$. We perturb $\gamma,$ while keeping the endpoints fixed, so that its image is contained in $A\backslash Z.$ First we partition $[0,1]$ into sufficiently small intervals $I_0,\dots, I_n$, such that the image of $I_i$ under $\gamma$ is contained in a sufficiently small subset $U_i$ of $A$ that fits into a single (convex) chart for the model Banach space of $A$. 

    Let $a_0\in A\backslash Z$ be the starting point of $\gamma$ and $a_1\in U_0\cap U_1$. Working in the chart for $U_0,$ we define a map $$\delta: U\to C^1([0,1]\times X, Y)\hspace{1mm}, \hspace{1mm} \delta(a)(t,x)=f(a_0+ta,x),$$ whose evaluation map is clearly $C^1$. Since $\dim ([0,1]\times X)=d+1$ and $\textrm{codim}_Y(W)=k$, we have that $d+1-k\leq -1$. Since $f$ is transverse to $W$, the associated evaluation map $\beta$ is transverse to $W$. The set of $a\in U_0$ such that $\delta(a)$ is transverse to $W$ is the set of regular values of the projection from $U\times ([0,1]\times X)\to U,$ restricted to $\beta^{-1}(W)$ \cite[Lemma 19.4 and Lemma 19.5]{ARflows}. By Smale's Density Theorem, we obtain that on a dense set of $a\in U_0,$ the image $\delta(a)([0,1]\times X)$ does not intersect $W$. We can thus choose $a\in U_0\backslash Z$ arbitrarily close to $a_1$ and so that the straight line path connecting $a_0$ to $a$ is contained in $U\backslash Z$. Henceforth replace $a_1$ with $a$.

    Returning to the path $\gamma,$ for $i=2,\dots, n,$ choose points $a_i\in U_{i-1}\cap U_i$, and let $a_{n+1}$ be the endpoint of $\gamma$. Choose a path from $a_1$ to $a_2$ contained entirely $U_1$. We repeat the above procedure to obtain an arbitrarily close path contained entirely in $U_1\backslash Z.$ Continuing inductively and concatenating our paths, we obtain the desired path from $a_0$ to $a_{n+1}.$
\end{proof}

\bibliographystyle{plain}
\bibliography{bibliography}

\end{document}